\def\Ac{{\mathcal A}}
\def\Cc{{\mathcal C}}
\def\Dc{{\mathcal D}}
\def\EE{{\mathbf E}}
\def\be{\begin{equation}}
\def\ee{\end{equation}}
\def\bea{\begin{equation*}}
\def\eea{\end{equation*}}
\def\begs{\begin{split}}
  \def\ends{\end{split}}
\numberwithin{equation}{section}
\newtheorem{thm}{Theorem}%[section]
\newtheorem{lma}[thm]{Lemma}
\newtheorem{prop}[thm]{Proposition}
\newtheorem{claim}{Claim}
\newtheorem{df}[thm]{Definition}
\theoremstyle{remark}
\newtheorem{preremark}[thm]{Remark}
\newtheorem{preex}[thm]{Example}
\title{Strict inequality for the chemical distance exponent in two-dimensional critical percolation}
\author{Michael Damron \thanks{The research of M. D. is supported by NSF grant DMS-0901534 and an NSF CAREER grant.} \\ \small{Georgia Tech}  \and Jack Hanson \thanks{The research of J. H. is supported by NSF grant DMS-1612921.}\\ \small{City College, CUNY} \and Philippe Sosoe\thanks{The research of P. S. is supported by the Center for Mathematical Sciences and Applications at Harvard University.} \\ \small{CMSA, Harvard and Cornell University}}
\begin{document}

\maketitle 
\begin{abstract}
We provide the first nontrivial upper bound for the chemical distance exponent in two-dimensional critical percolation. Specifically, we prove that the expected length of the shortest horizontal crossing path of a box of side length $n$ in critical percolation on $\mathbb{Z}^2$ is bounded by $Cn^{2-\delta}\pi_3(n)$, for some $\delta>0$, where $\pi_3(n)$ is the ``three-arm probability to distance $n$.'' This implies that the ratio of this length to the length of the lowest crossing is bounded by an inverse power of $n$ with high probability. In the case of site percolation on the triangular lattice, we obtain a strict upper bound for the exponent of $4/3$.

The proof builds on the strategy developed in our previous paper \cite{DHSchemical1}, but with a new iterative scheme, and a new large deviation inequality for events in annuli conditional on arm events, which may be of independent interest.
\end{abstract}

%{\color{green}
%\section{General comments}
%\begin{enumerate}
%\item we should re-read arm separation from paper with Artem.
%\end{enumerate}
%}

\section{Introduction}
In this paper, we study the volume of crossing paths of a square $[-n,n]^2$ in two-dimensional critical Bernoulli bond percolation. We show that, conditioned on the existence of a horizontal crossing path, there exists with high probability a path whose volume is smaller than that of the lowest crossing by a factor of the form $n^{-\delta}$ for some $\delta>0$. 

\begin{thm}\label{thm: main}
Consider critical bond percolation on the edges of the $\ell^\infty$ box $[-n,n]^2\cap \mathbb{Z}^2$. Let $H_n$ be the event that there exists a horizontal open crossing of $[-n,n]^2$, and on $H_n$, let $l_n$ be the lowest open horizontal crossing. Finally, let $L_n=\# l_n$ and $S_n$ be the least number of edges of any open horizontal crossing. Then there is a $\delta>0$ and a constant $C>0$ such that
\begin{equation}
\mathbf{E}[S_n\mid H_n]\le Cn^{-\delta}\mathbf{E}[L_n|H_n] \quad \text{for all }n.
\end{equation}
\end{thm}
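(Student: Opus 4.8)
The plan is to replace the statement about $\mathbf{E}[S_n\mid H_n]$ by a high-probability construction of a short open crossing. Concretely, it suffices to produce $\delta>0$, $C>0$ and events $\mathcal{G}_n\subset H_n$ with $\mathbf{P}(H_n\setminus\mathcal{G}_n\mid H_n)\le n^{-3\delta}$, on which there is an open horizontal crossing $\sigma_n$ of $[-n,n]^2$ with $\#\sigma_n\le Cn^{2-\delta}\pi_3(n)$. Indeed, using the deterministic inequality $S_n\le L_n$, the Morrow--Zhang estimate $\mathbf{E}[L_n\mid H_n]\asymp n^2\pi_3(n)$ and the matching second-moment bound $\mathbf{E}[L_n^2\mid H_n]=O((n^2\pi_3(n))^2)$ up to logarithmic factors (which follows from the same arm-event methods), Cauchy--Schwarz controls the contribution of $H_n\setminus\mathcal{G}_n$, and on $\mathcal{G}_n$ one uses $S_n\le\#\sigma_n$. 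On the triangular lattice $\pi_3(n)=n^{-2/3+o(1)}$, so the construction gives $\#\sigma_n\le n^{4/3-\delta+o(1)}$, i.e. the stated strict inequality for the exponent.

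Next I would isolate a single ``gain step'' applied to an arbitrary open crossing $\gamma$ (initially $\gamma=l_n$). For each dyadic scale $m=2^k$ with $k$ between a large constant and $\log_2 n$, tile the strip of height comparable to $n$ above the bottom of $[-n,n]^2$ by boxes $Q$ of side $m$, and for each $Q$ define a \emph{shortcut event} $\mathcal{D}(Q)$: inside a fixed dilate of $Q$ there is an open connection crossing $Q$ while staying above its lowest $\eta m$ rows, together with the dual-closed structure that makes a rerouting through it legitimate. On $\mathcal{D}(Q)$ one splices out the portion of $\gamma$ inside the bottom $\eta m$ rows of $Q$ and reconnects using the higher open connection; the inserted connector has $O(m)$ edges, so the total rerouting overhead at a fixed scale is $O(n)$ and over all $\log n$ scales it is $O(n\log n)$, negligible against $n^2\pi_3(n)$. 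The edges removed are (essentially) the three-arm points of $\gamma$ at scale $m$ --- open arms to the two sides, dual-closed arm downward --- and the quantitative heart of this step is that, summed over all scales, a \emph{positive fraction} of all edges of $\gamma$ are removed, yielding a new open crossing $\gamma'$ with $\#\gamma'\le(1-c)\#\gamma+O(n\log n)$.

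The new iterative scheme then consists of composing this step: set $\gamma_0=l_n$, $\gamma_{j+1}=(\gamma_j)'$ (with the tilings and shortcut events taken relative to the current path), and run $j$ up to $\lceil c'\log n\rceil$, so that after summing the geometric series of overheads $\#\gamma_{\lceil c'\log n\rceil}\le (1-c)^{c'\log n}\#l_n+O(n\log n)\le n^{-\delta}\mathbf{E}[L_n\mid H_n]$ on the good event. This is where the earlier argument of \cite{DHSchemical1}, which effectively ran only $O(\log\log n)$ scales of improvement, fell short of a power of $n$; the delicate point now is that the gain step must keep applying to $\gamma_j$, which is no longer the lowest crossing, so one must show $\gamma_j$ retains enough of the three-arm structure of a lowest crossing for the rerouting estimates to persist.

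The main obstacle is the probabilistic input that, simultaneously at every scale $m=2^k$ and every iteration $j$, the fraction of sub-boxes $Q$ met by $\gamma_j$ on which $\mathcal{D}(Q)$ occurs is at least $c$. The conditional \emph{expectation} of this fraction is bounded below by a constant via RSW and arm estimates, but upgrading to a high-probability statement requires concentration across the $\asymp n/m$ boxes at a scale, and this must be done \emph{conditionally on the long-range arm events} (the left--right crossing, and the three-arm constraints that pin $\gamma_j$ near the bottom) that correlate these otherwise distant local events. This is precisely the new large deviation inequality: conditioned on a family of arm events, the indicators of local events in disjoint annuli deviate from their conditional means with exponentially small probability, proved via the domain-Markov property combined with quasi-multiplicativity and arm separation to bound the conditional ``influence'' of each annulus. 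With a failure probability of order $e^{-(n/m)^{c''}}$ per (scale, iteration) pair, a union bound over the $O(\log^2 n)$ pairs produces the event $\mathcal{G}_n$ and the construction goes through. The remaining technical points --- that the splicings at different scales can be made nested so the result is a single open path, and that the dual-closed ingredients of $\mathcal{D}(Q)$ genuinely license the rerouting --- are handled as in \cite{DHSchemical1}.
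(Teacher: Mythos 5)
Your proposal diverges from the paper's scheme in a way that the paper explicitly flags as the central obstruction, and I don't think the gap is fixable along the lines you sketch.

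Your iteration is on the actual random paths: $\gamma_0=l_n$, $\gamma_{j+1}=(\gamma_j)'$, with shortcut events and rerouting applied each time to the \emph{current} path $\gamma_j$. You acknowledge that ``the gain step must keep applying to $\gamma_j$, which is no longer the lowest crossing, so one must show $\gamma_j$ retains enough of the three-arm structure of a lowest crossing,'' but this is precisely the missing ingredient, not a technicality. The whole quantitative machinery --- the lower bound $\mathbf{P}(E_k(e)\mid e\in\text{crossing})\geq c\epsilon^4$, the upper bound $\mathbf{P}(e\in\text{crossing}\mid\cdot)\leq C\pi_3(d)$, and the conditional large-deviation estimate --- rests on the fact that membership of an edge in the lowest crossing (or, later, in the \emph{outermost} arc of a U-shaped region) is characterized \emph{locally} by a three-arm event. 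Once you replace $l_n$ by $\gamma_1$ (a path built by a global, scale-dependent, path-dependent splicing procedure), the event $\{e\in\gamma_1\}$ is no longer a local arm-type event, so none of these estimates apply, and there is no apparent way to bound the conditional probability of a shortcut event around $e\in\gamma_1$, nor the expected number of edges of $\gamma_1$ in a box. The paper states this directly in the introduction (``a natural idea is to try to construct shortcuts around the lowest crossing in this `unexplored' region, conditional on $l_n$. This type of approach is doomed to failure...'' and ``it is not clear how to manipulate the shortest crossing; we only have information on the lowest crossing''), and circumvents it by iterating not on paths but on the \emph{expected-length bounds}: one assumes $\mathbf{E}[\#\mathfrak{s}_l\mid E_l']\leq \delta_l(i)2^{2l}\pi_3(2^l)$ on all lower scales, constructs a single path $\sigma(i)$ from scale-$l$ shortcuts placed along the \emph{outermost arc} $\ell_k$ (always an extremal, locally-characterized object), and derives the improved bound $\delta_k(i+1)$. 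Thus every generation of the iteration only ever manipulates a lowest/outermost path, never a previously-constructed shortcut path.

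A second, smaller, issue is your claimed failure rate $e^{-(n/m)^{c''}}$ per (scale, iteration) pair. The paper's conditional large-deviation theorem (Theorem~\ref{thm: concentration}), applied around a single edge and conditional on a three-arm event, yields a failure probability that is polynomial in the ratio of scales, and in the proof of Theorem~\ref{thm: main} this produces a bound of order $n^{-\eta}$ per edge --- not stretched-exponential. An exponential rate across all $\sim n/m$ boxes simultaneously is not available, because the very arm events you condition on already determine a substantial part of the configuration. Related to this, your gain step $\#\gamma'\leq(1-c)\#\gamma+O(n\log n)$ does not match the actual bookkeeping: the paper's analogue of the additive error is $n^{-\eta}\mathbf{E}[\#l_n\mid H_n]$ (coming from edges with no shortcut at any scale), and the multiplicative gain after the first pass is already $\epsilon$-small, not merely $(1-c)$. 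Your high-probability reduction to $\mathcal{G}_n$ via Cauchy--Schwarz and a second-moment bound on $L_n$ is a reasonable ``black-box'' step, but the core of the argument --- how to iterate while retaining local characterizations --- is precisely where your plan and the paper part ways, and it is the part your proposal does not solve.
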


The minimal number of edges of any horizontal open crossing of a box is called the \underline{chemical} \underline{distance} between the left and right sides of the box. This terminology appears to originate in the physics literature, where the intrinsic distance in the graph defined by large critical percolation clusters has been studied extensively \cite{edwardskerstein, grassberger, havlin0, havlin1,hermann-stanley0,hermann-stanley,zydz}. An early reference is \cite{havlin1}, where the authors credit the physicist S. Alexander for introducing them to the term ``chemical distance.'' A common assumption in this literature is the existence of a scaling exponent $d_{\mathrm{min}}$ such that
\begin{equation}
\mathbf{E}[S_n\mid H_n]\sim n^{d_{\mathrm{min}}},
\end{equation}
where the precise meaning of $\sim$ remains to be determined. Unlike for other critical exponents in percolation, there is not even a generally accepted prediction for the exact value of $d_{\mathrm{min}}$. The existence and determination of an exponent for the chemical distance in any two-dimensional short-range critical percolation model is thus far out of reach of current methods. In particular, as noted by O. Schramm in \cite{schramm}, the chemical distance is not likely to be accessible to SLE methods. For long-range models and for correlated fields, on the other hand, there has been much recent progress; see \cite{Biskup,CernyPopov,DingSly,DRS}, and also \cite{DingLi}, where it is stated that ``it is a major challenge to compute the exponent on the chemical distance ... for critical planar percolation.'' Apart from its mathematical appeal, further progress on the chemical distance is a significant obstacle to analyzing random walks on low-dimensional critical percolation clusters (the last progress being by Kesten \cite{KestenRW} in '86) and testing the validity of the celebrated Alexander-Orbach conjecture \cite{AlexanderOrbach}.

It is known that the chemical distance in percolation clusters behaves linearly in the supercritical phase, when $p>p_c$ \cite{antalpisztora, grimmettmarstrand}. The same is true in the subcritical phase. Indeed, we have:
\begin{equation}\label{eqn: rebeccah}
\mathbf{P}_p\bigg(\mathrm{dist}_{\mathrm{chem}}(x,y)\ge \lambda |x-y| ~\bigg|~ x\leftrightarrow y\bigg)\le Ce^{-\lambda c_0|x-y|}, \quad p<p_c
\end{equation}
where $\mathrm{dist}_{\mathrm{chem}}(x,y)$ is the chemical distance between the sites $x$ and $y$ in $\mathbb{Z}^d$ and $\mathbf{P}_p(\cdot \mid x\leftrightarrow y)$ denotes the Bernoulli percolation measure with density $p$, conditioned on the event that $x$ and $y$ are connected by an open path.
This follows easily from exponential decay of the cluster volume \cite{aizenmannewman}. 
 
 In critical percolation, connected paths are expected to be \underline{tortuous} in the sense of \cite{aizenmanburchard, kestenzhang}; that is, they are asymptotically of dimension $>1$. In high dimensions, precise estimates are known, and macroscopic connecting paths have dimension 2 \cite{kozmanachmias1, kozmanachmias2, vdhs}. These estimates ultimately depend on results obtained using the lace expansion. See \cite{vdh-heyden} for a good treatment of such high-dimensional results, as well as further references.

In the low-dimensional, critical case, the chemical distance is not well understood, even at the physics level of rigor. The main result of this paper is the first nontrivial upper bound on $d_{\text{min}}$ which, combined with those of Aizenman-Burchard \cite{aizenmanburchard}, implies that for some $\delta>0$,
\begin{equation}\label{eqn: gideon}
  n^{1+\delta}\le \mathbf{E}[S_n \mid H_n] \le n^{-\delta}n^2\pi_3(n).
\end{equation}  
%where $S_n$ is the chemical distance between the vertical sides of the box. 
In site percolation on the triangular lattice, the right side of the inequality is bounded by $n^{1+s}$ for some $s<1/3$. In \cite{kestenzhang}, H. Kesten and Y. Zhang asked whether $S_n=o(L_n)$ with high probability. We answered this question affirmatively in \cite{DHSchemical1}. The possibility of the stronger inequality on the right side of \eqref{eqn: gideon} holding was also mentioned in \cite{kestenzhang}. It appears to have been expected by experts to be correct, but there is no simple, convincing heuristic for this expectation, and even no obvious reason to believe that there are crossings of different dimensions. Indeed, for large $d$, the chemical distance exponent is 2, and this coincides with the exponent for the expected total number of points on all self-avoiding open paths between two vertices that are conditioned to be connected to each other.

Our strategy builds on that in our previous paper \cite{DHSchemical1}. The key idea introduced in that paper was to construct local modifications around an edge $e$ which implied the existence of a shortcut path around $e$, conditional on $e\in l_n$, rather than to attempt to construct modifications after conditioning on $l_n$ itself. The latter point is essential; given the conditional independence of the region above the lowest crossing, a natural idea is to try to construct shortcuts around the lowest crossing in this ``unexplored'' region, conditional on $l_n$. This type of approach is doomed to failure. The roughness of the lowest crossing prevents the use of the usual volume estimates based on arm exponents, making it difficult to control the size of potential shortcuts effectively.

To improve on the bounds from \cite{DHSchemical1}, one would hope to build shortcut paths on other shortcuts, saving length on those paths that are already shorter than portions of the lowest crossing, in an inductive manner. The main difficulty with this approach is that it is not clear how to manipulate the shortest crossing; we only have information on the lowest crossing. The idea at the heart of our proof is, instead of placing shortcuts on other shortcuts, to perform an iteration on the expected lengths of shortcuts. Roughly speaking, if one can produce paths on a certain scale which have a savings over the lowest crossing, then on larger scales, one can build paths using these shortcuts in places where the lowest crossing is abnormally long. This in turn gives a larger improvement on the higher scale. 
%We use a modified definition of the events in \cite{DHSchemical1} (see also our note \cite{DHSchemical2}) and implement an iteration method to obtain improved bounds on higher and higher scales, until we can construct a path of expected length at most $n^{-\delta}n^2\pi_3(n)$. 
The main iterative result (for open paths in ``U-shaped regions'') appears in Section~\ref{sec: iteration} as Proposition~\ref{prop: main}, and we quickly derive Theorem~\ref{thm: main} from it in Section~\ref{sec: main_proof}. 
%This iteration on the length of the shortcuts is the central contribution of the present paper. 
A more detailed outline of the proof appears in the next section.

An important tool in our proof is Theorem~\ref{thm: concentration}, in Section~\ref{sec: concentration}, which is a new large deviation bound for sequences of events in disjoint annuli conditional on arm events. See the discussion in Step 2 of the proof sketch in the next section. We believe this bound should be useful for other problems.

The result presented here involves intricate gluing constructions using the Russo-Seymour-Welsh and generalized FKG inequalities. Given a description of the required connections, the details of such constructions are standard. To limit the length of this paper and focus on the original aspects of the proofs, we omit such technical details. We also frequently refer to \cite{DHSchemical1} for proofs of technical results which are similar to those appearing in that paper.
\section{Outline of the proof}
We begin by outlining the proof. In this section and the rest of the paper, given an edge $e$ and $L>0$, $B(e,L)$ denotes the box of side length $L$ centered at the lower-left endpoint of $e$. Theorem~\ref{thm: main} is a consequence of an iterative bound given in Proposition~\ref{prop: main}, so we sketch the idea for the latter's proof.

This outline splits into two parts: steps 1 - 3 summarize the construction of shortcut paths around portions of the lowest crossing $l_n$. These shortcuts are used to build a path $\sigma$ which improves on $l_n$ by a constant factor: it satisfies the bound in \eqref{eqn: tree}. Steps 4 - 5 describe the iterative procedure used to make improvements on open paths $\ell_k$ in U-shaped regions. Roughly speaking, if one can construct open paths on scale $2^k$ which improve on $\ell_k$ by a constant factor (see \eqref{eq: assume_sketch}), then for $m \geq k+C$, one can use these paths, with additional savings, to improve on $\ell_m$ by a smaller constant factor (see \eqref{eq: result_sketch}).
\begin{enumerate}
\item[Step 1.] \textbf{Construction of shortcuts.}
Given $\epsilon>0$, and an edge $e \in B(n)$, we define an event $E_k(e)$ depending on $B(e,2^K)\setminus B(e,2^k)$, with
\begin{equation}
K= k+\lfloor \log \frac{1}{\epsilon}\rfloor,
\end{equation}
such that 
\begin{equation}\label{eqn: intro-prob-lower-bound}
\mathbf{P}( E_k(e)\mid e\in l_n)\ge c\epsilon^4,
\end{equation}
for some $c>0$ and such that the occurrence of $E_k(e)$ implies the existence of an open arc $r\subset  B(e,3\cdot 2^k)$ with endpoints $u(e)$ and $v(e)$ on the lowest crossing $l_n$ of $[-n,n]^2$ and otherwise not intersecting it. Moreover, letting $\tau=\tau(r)$ be the portion of $l_n$ between $u(e)$ and $v(e)$, we have $e\in \tau$ and 
\[\frac{\# r}{\# \tau}\le \epsilon.\]
See Figure \ref{fig: shortcut}. If $E_k(e)$ occurs and $e \in l_n$, there is a shortcut $r$ on scale $2^k$ around edges of the lowest crossing which saves at least $(1/\epsilon)\cdot \#r$ edges.
The open arc $r$ is constructed in such a way that the shortcuts $r$, $r'$ resulting from the occurrence of $E_k(e)$, $E_l(e')$, are either nested or disjoint.

The definition of $E_k(e)$ appears in Section \ref{sec: Ek}.

\begin{figure}
\centering
\scalebox{0.80}{\includegraphics[trim={0 0 0 0},clip]{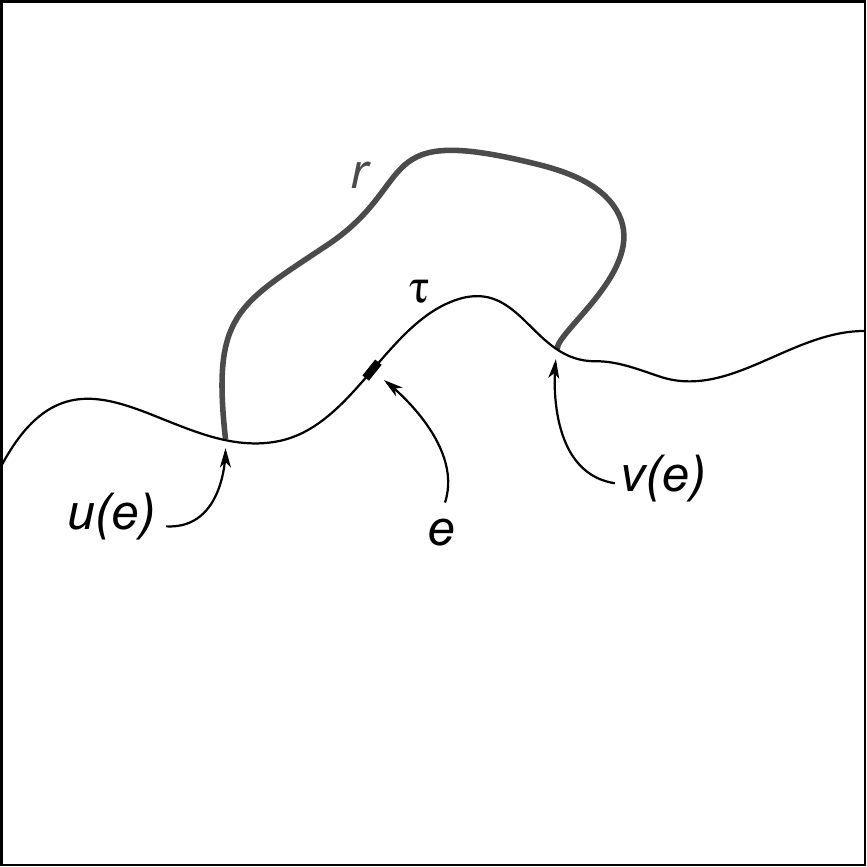}}
\caption{The topological plan of a shortcut. The outer box represents $B(n)=[-n,n]^2$. $\tau$ is a segment of the lowest crossing $l_n$, containing the edge $e$, with endpoints $u(e)$ and $v(e)$. The shortcut $r$, represented in grey, lies in the region above $l_n$, with endpoints $u(e)$ and $v(e)$. It bypasses the edge $e$.}
\label{fig: shortcut}
\end{figure}

\item[Step 2.] \textbf{Probability bound on shortcuts.} For each edge $e$, we define $\mathcal{S}(e)$ to be the collection of all shortcut paths around $e$ arising from occurrence of an event $E_k(e')$ for some $k$, and some $e'\in l_n$. Using the lower bound \eqref{eqn: intro-prob-lower-bound}, we show in Section \ref{sec: ldetouredpath} that if $\mathrm{dist}(e,\partial B(n))\ge d$, then $\mathcal{S}(e)=\emptyset$ implies that no events $E_j(e)$ occur for $j=1,\ldots, C\log d$ and so
\[
\mathbf{P}( \mathcal{S}(e)=\emptyset\mid e\in l_n)\le Cd^{-\frac{\epsilon^4 c}{\log \frac{1}{\epsilon}}}.
\]
The form of the right side follows from a large deviation bound conditional on a three-arm event from Section~\ref{sec: concentration} (developed using tools from our recent study of arm events in invasion percolation \cite{DHSinvasionarms}) that allows us to roughly decouple $E_k(e)$ and $E_j(e)$ on the event $e \in l_n$ so long as $|k-j| \geq C\log \frac{1}{\epsilon}$. Note that in our previous work \cite{DHSchemical1}, we were only able to obtain a weaker probability bound of the form\footnote{The estimate stated here does not appear in \cite{DHSchemical1}, but the method presented there can be quantified to obtain it. See the note \cite{DHSchemical2}.}
\[\mathbf{P}( \mathcal{S}(e)=\emptyset\mid e\in l_n) = o(1/\log d), \quad d \rightarrow \infty.\]

\item[Step 3.] \textbf{Construction of shorter crossing.}
Forming an arc $\sigma$ from a maximal collection of shortcuts and the remaining edges of $l_n$ with no shortcuts around them, we find (a special case of equation \eqref{eqn: central}):
\begin{equation}\label{eqn: tree}
\mathbf{E}[\# \sigma \mid H_n ]\le (\epsilon+ Cn^{-\frac{c\epsilon^4}{\log \frac{1}{\epsilon}}}) \cdot \mathbf{E}[\# l_n\mid H_n].
\end{equation}
The term  $\epsilon \mathbf{E}[\# l_n\mid H_n ]$ in \eqref{eqn: tree} is the contribution from the shortcuts, and the term of the form $n^{-c} \mathbf{E}[\# l_n\mid H_n]$ comes from estimating the expected  volume of the edges of the lowest crossing with no shortcut around them.
\item[Step 4.] \textbf{Iteration in U-shaped regions: initial step.} The shortcuts constructed in Steps 1-3 are contained in ``U-shaped''  regions of the form shown in Figure \ref{fig: ushaped} attached to the lowest crossing. We repeat the previous construction in a U-shaped region, conditional on the event $E_k'$ that there exist two five-arm points in the boxes $B_1$, $B_2$, with a closed and an open arc connecting these two points.

Denoting the outermost open arc between the five-arm points by $\ell_k$, one begins with an initial estimate in \eqref{eqn: initial} (see \cite{DHSchemical1} for similar bounds):
\[\mathbf{E}[\#\ell_k \mid E_k']\le C2^{2k}\pi_3(2^k).\]
The first step of the iteration uses the construction that led to the estimate \eqref{eqn: tree}. Inside the U-shaped region, we define a path $\sigma$ joining the two five-arm points and show in Section~\ref{sec: case_one} that its length is bounded by
\begin{align*}
\mathbf{E}[\# \sigma \mid E_k'] &\le (\epsilon + 2^{-c\frac{\epsilon^4 k}{\log \frac{1}{\epsilon}}})2^{2k}\pi_3(2^k)\\
&\leq C\epsilon^{1/2} 2^{2k}\pi_3(2^k),
\end{align*}
whenever $k$ is at least a constant $s_1$ depending on $\epsilon$ (see \eqref{eqn: getoor}).
\item[Step 5.] {\bf Iteration in U-shaped regions: inductive step.} In this step, we iterate the construction from step 4 on a large scale to improve on shortcuts from lower scales. This procedure is one of innovations in the current paper and is summarized in the central inequality \eqref{eqn: central} of Proposition~\ref{prop: central}. That inequality relates the savings in length on one scale to those on lower scales.

More precisely for the $i$-th step of the iteration, in Proposition~\ref{prop: central}, we begin with initial estimates (see \eqref{eqn: assumed-bds})
\begin{equation}\label{eq: assume_sketch}
\mathbf{E}[\#{\mathfrak{s}}_l \mid E_l'] \leq \delta_l(i) 2^{2l} \pi_3(2^l)
\end{equation}
for $l\geq 1$ and parameters $\delta_l(i)>0$. (From step 4, one can take $\delta_l(1)$ a constant for $l \leq s_1$ and $C\epsilon^{1/2}$ for $l \geq s_1$.) Here, $\mathfrak{s}_l$ is an open path connecting the two five-arm points with the minimal number of edges. We then use a version of the construction of step 4 described in Section~\ref{sec: construction} to build a path $\sigma$ out of shortcuts (saving $\kappa_l(i):=\epsilon \cdot \delta_l(i)$ on scale $l$) connecting five-arm points of a U-shaped region on scale $k$ whose expected length is bounded by the right side of \eqref{eqn: central}. In Proposition~\ref{prop: general-iteration} of Section~\ref{sec: case_two}, we bound this right side to show
\begin{equation}\label{eq: result_sketch}
\mathbf{E}[ \#\mathfrak{s}_{k} \mid E_k']\le \delta_k(i+1)2^{2k}\pi_3(2^k)
\end{equation}
for $k \geq 1$ and parameters $\delta_k(i+1)$ which can roughly be taken as
\[
\delta_k(i+1) \sim C'\epsilon^{1/2} \delta_{k-C''}(i),
\]
where $C'$ is independent of $\epsilon$ and $C''$ has order $\epsilon^{-4}(\log \frac{1}{\epsilon})^2$. Equation \eqref{eq: result_sketch} along with these values of $\delta_k(i+1)$ states that if we move up $\epsilon^{-4}$ scales, we accumulate an additional savings of $C'\epsilon^{1/2}$. This is sufficient to conclude the induction for the general bound of Proposition~\ref{prop: main}: for $2^k \geq (C\epsilon^{-4}(\log \frac{1}{\epsilon})^2)^L$ and $L \geq 1$,
\[
\mathbf{E}[\#\mathfrak{s}_k \mid E_k'] \leq (C'\epsilon^{1/2})^L 2^{2k} \pi_3(2^k).
\]

\end{enumerate}

\section{Notations}
Throughout this paper, we consider the square lattice $\mathbb{Z}^2$, viewed as a graph with edges between nearest-neighbor vertices. We denote the set of edges by $\mathcal{E}^2$. The critical bond percolation measure $\mathbf{P}$ is the product measure 
\[\mathbf{P}= \prod_{e\in\mathcal{E}^2}\frac{1}{2}(\delta_0+\delta_1)\]
on $\Omega= \{0,1\}^{\mathcal{E}^2}$, with the product sigma-algebra. For an edge $e\in \mathcal{E}$, the translation of $e=\{v_1,v_2\}$ by a vertex $v\in \mathbb{Z}^2$ is
\[\tau_v e = \{v_1+v, v_2+v\}.\]
For $\omega \in \Omega$, the translation $\tau_v \omega$ is defined by
\[(\tau_v \omega)_e= \omega_{\{v_1+v,v_2+v\}}\]
for each edge $e$.
For an event $E\subset \Omega$, we define the event translated by $-v$, $\tau_{-v} E$, by
\[\omega\in E \iff \tau_v \omega \in \tau_{-v} E.\]

A lattice \underline{path} is a sequence of vertices and edges $v_0$, $e_1$, $v_1$, $\ldots$, $e_N$, $v_N$ such that $\|v_{k-1}-v_k\|_1=1$ and $e_k=\{v_{k-1},v_k\}$. A path is called a \underline{circuit} if $v_0=v_N$. A path is called \underline{vertex self-avoiding} if $v_i=v_j$ implies $i=j$. A path (or circuit) is said to be open if all its edges are open ($\omega(e_i)=1$ for $i=1,\ldots,N$). A circuit is said to be \underline{open with $k$ defects} if all but $k$ edges are on the circuit are open.

The coordinate vectors $\mathbf{e}_1$, $\mathbf{e}_2$ are
\[ \mathbf{e}_1 = (1,0),\  \mathbf{e}_2 = (0,1).\]
The \underline{dual lattice} $(\mathbb{Z}^2)^*$ is
\[(\mathbb{Z}^2)^*= \mathbb{Z}^2 + \frac{1}{2}(\mathbf{e}_1+\mathbf{e}_2).\]
To each edge $e\in \mathcal{E}^2$, we associate a \underline{dual edge} $e^*$, the edge of $(\mathcal{E}^2)^*$ which shares a midpoint with $e$. For a configuration $\omega\in \Omega$, the dual configuration $\omega^*$ is defined by $\omega^*(e^*)=\omega(e)$. A dual path is a path made of dual vertices and edges. The definitions of circuit and circuit with defects extend to the case of dual paths in a straightforward way.

We will frequently refer to the three-arm event $A_3(n)$ that
\begin{enumerate}
\item The edge $\{0, \mathbf{e}_1\}$ is connected to $\partial B(n)$ by two open vertex-disjoint paths,
\item $(1/2)(\mathbf{e}_1-\mathbf{e}_2)$ is connected to $\partial B(n)^*$ by a closed dual path.
\end{enumerate}
For $v\in \mathbb{Z}^2$, $A_3(v,n)$ denotes the event $A_3(n)$ translated by $v$. We also consider the three-arm event centered at an edge $e=\{v_1,v_2\}$, characterized by the conditions
\begin{enumerate}
\item $e$ is connected to $\partial B(e,n)$ by two vertex-disjoint open paths,
\item The dual edge $e^*$ is connected to $\partial B(e,n)^*$ by a closed dual path.
\end{enumerate}
The probability of the three-arm event is denoted by
\[\pi_3(n):= \mathbf{P}(A_3(n)).\]
A fact concerning $\pi_3(n)$ we will use several times is the existence of a $\beta= 1-\gamma$ for some $\gamma>0$ can be chosen such that
\begin{equation}\label{eqn: leslie}
\frac{\pi_3(2^d)}{\pi_3(2^L)}\le C_52^{\beta(L-d)}, \quad d\le L,
\end{equation}
for some $C_5\ge 1$. See \cite[Lemma 2.1]{DHSchemical1}.

Throughout the paper, the usual notation for the logarithm $\log$, is reserved for the logarithm in base 2; thus in our notation
\[2^{\log x}:=2^{\log_2 x}=x\]
for all $x\in \mathbb{R}$. The nonnumbered constants $C,C'$, and so on, will represent possibly different numbers from line to line.

\section{Definition of $E_k$}\label{sec: Ek}
Suppose the event $H_n$ that there exists a horizontal open crossing of $[-n,n]^2$ occurs. Any vertex self-avoiding open path connecting the vertical sides of $[-n,n]^2$ corresponds to a Jordan arc separating the top side $[-n,n]\times \{n\}$ from the bottom side $[-n,n]\times \{-n\}$. $l_n$ is the vertex self-avoiding horizontal open crossing path such that the closed region  $\mathcal{B}(l_n)$ of $[-n,n]^2$ below and including $l_n$ is minimal.

A fact we will use very frequently, in various forms, is that  an edge $e$ is in the lowest crossing $l_n$ if and only if (a) it is open, (b) there are two vertex-disjoint open paths connecting $e$ to the left and right sides of $B(n)$, and (c) there is a closed dual path connecting $e^*$ to the bottom of $B(n)$. Using this, one can show that there are constants $c,C$ such that if $e\in B(n)$ is an edge with $\mathrm{dist}(e,\partial B(n))=d$, then 
\begin{equation}\label{eqn: 3-arm-bdy}
  c\pi_3(d)\pi_2(d,n) \le \mathbf{P}(e\in l_n \mid H_n) \le C\pi_3(d),
\end{equation}
where $\pi_k(d,n)$ is the ``$k$-arm'' probability corresponding to crossings of an annulus $B(n) \setminus B(d)$. This estimate was already used extensively in our previous paper \cite{DHSchemical1}. See for example Lemma 5.3 there. A similar claim and estimate holds for the probability that an edge $e$ belongs to other extremal crossing paths, such as the innermost circuit in a macroscopic annulus. See \cite{DHSchemical1} again.

\begin{df}[$\kappa$-shortcuts] \label{def: shortcuts} For an edge $e\in l_n$, the set $\mathcal{S}(e,\kappa)$ of $\kappa$-shortcuts around $e$ is defined as the set of vertex self-avoiding open paths $r$ with vertices $w_0,\ldots, w_M$ such that
\begin{enumerate}
\item for $i=1,\ldots M-1$, $w_i\in [-n,n]^2\setminus \mathcal{B}(l_n)$,
\item the edges $\{w_0, w_0 + \mathbf{e}_1\}$, $\{w_0-\mathbf{e}_1,w_0\}$, $\{w_M,w_M+\mathbf{e}_1\}$, and $\{w_M-\mathbf{e}_1,w_M\}$ are in $l_n$ and $w_1 = w_0+\mathbf{e}_2$, $w_{M-1}= w_M+\mathbf{e}_2$.
\item writing $\tau$ for the subpath of $l_n$ from $w_0$ to $w_M$, $\tau$ contains $e$, and the path $r\cup \tau$ is an open circuit in $[-n,n]^2$,
\item The points $w_0 + (1/2)(-\mathbf{e}_1+\mathbf{e}_2)$ and $w_M + (1/2)(\mathbf{e}_1 + \mathbf{e}_2)$ are connected by a dual closed vertex self-avoiding path $\mathfrak{c}$, whose first and last edges are vertical (translates of $\{0,\mathbf{e}_2\}$), and which lies in $[-n,n]^2\setminus \mathcal{B}(l_n)$.
\item $\#r\leq \kappa \# \tau$.
\end{enumerate}
\end{df}

For $0<\epsilon < 1$, we  define the annulus
\[A(2^k,2^K):=[-2^K,2^K]^2\setminus [-2^k,2^k]^2,\]
where 
\begin{equation}\label{eqn: K-def}
K= k+ \lfloor \log \frac{1}{\epsilon}\rfloor.
\end{equation}
For $\delta>0$, we define an event $E_k = E_k(\epsilon,\delta)$ depending only on the edges in the annulus $A(2^k,2^K)$ which implies the existence of a $\delta\epsilon$-shortcut around $e$ when $e\in l_n$. The next subsection contains a precise description of $E_k$. It involves a large number of connections, and appears in equation \eqref{eqn: Ek-def}, following Proposition \ref{prop: if-Exp}.
 The event is illustrated in Figures \ref{fig: main-Ek} and \ref{fig: small}. We encourage the reader to study these figures. The important features include:
\begin{itemize}
\item An open arc (shortcut), whose length is of order at most $\delta 2^{2k}\pi_3(2^k)$, connecting two arms emanating from the 3-arm edge $e$. This arc lies inside a box of side length $3\cdot 2^k$ centered at $e$, and is depicted as the top (solid) arc in Figure~\ref{fig: small}.
\item A path with length of order at least $2^{2K}\pi_3(2^K)$, whose edges necessarily lie on the lowest crossing if $e$ does. This path is depicted as the pendulous curve in Figure~\ref{fig: main-Ek}.
\end{itemize}

\begin{figure}
\centering
\scalebox{0.55}{\includegraphics[trim={0 0 0 0},clip]{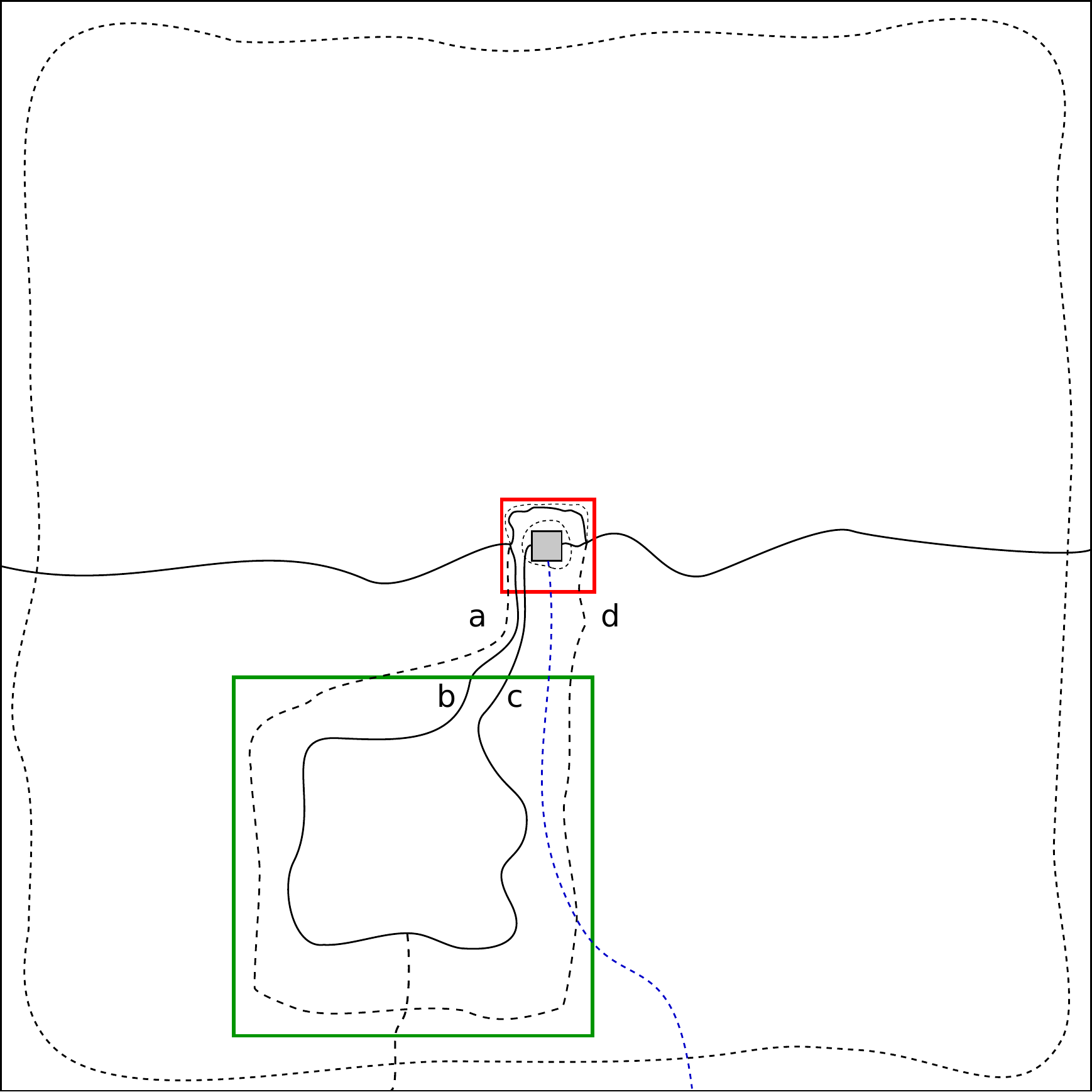}}
\caption{The event $E_k$. The outermost square represents the boundary of the box $[- 2^K, 2^K]^2$. The box with red boundary is $[-3 \cdot 2^k, 3 \cdot 2^k]^2$ This box contains a shortcut which bypasses the lowest crossing. Details of the construction inside the red box appear in Figures \ref{fig: small} and \ref{fig: small-2}. 
The blue path is not part of the definition of $E_k$. Rather, it is present if the grey box contains an edge of the lowest path of a larger box. In this case, all three-arm points in the green box with a closed dual arm to the bottom of that box also lie on the lowest path. The green box has size of order $\mathrm{const.}\times 2^K$.
The asymmetric placement of the green box serves to make it clear that both the shortcut and detoured paths in $E_k$ (the blue paths in Figures \ref{fig: small-2} and \ref{fig: details}, respectively) are contained in the box of side-length $2^{K}+3\cdot 2^k$ whose lower left corner coincides with that of $[-2^K,2^K]^2$. This will be essential for the iteration scheme in Section \ref{sec: iteration}.}
\label{fig: main-Ek}
\end{figure}

\begin{figure}
\centering
\scalebox{0.6}{\includegraphics[trim={0 0 0 0},clip]{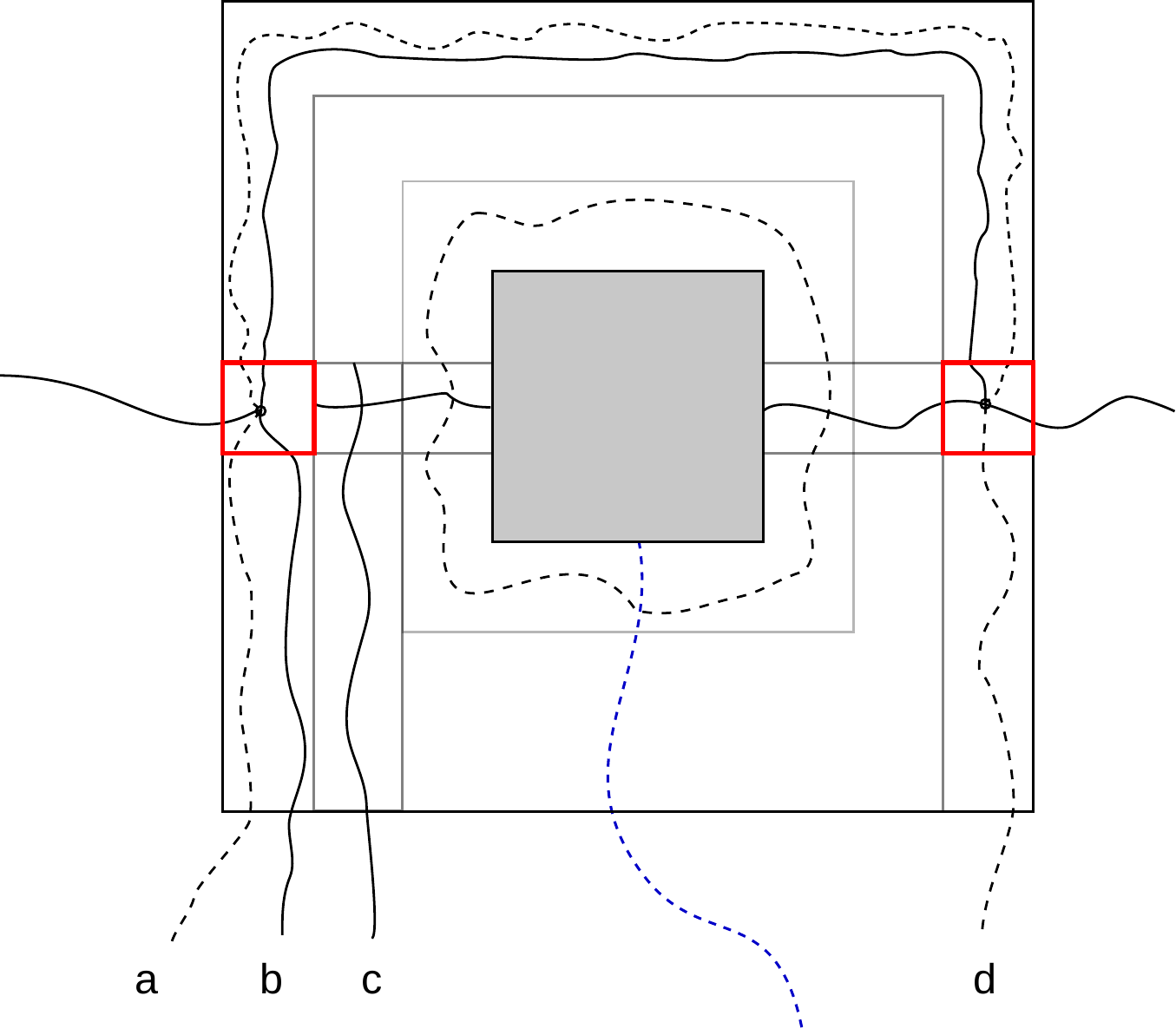}}
\caption{The inner box from Figure \ref{fig: main-Ek}: the grey area represents the box $[-2^k,2^k]^2$. The outer square is the boundary of the box $[-3\cdot 2^k, 3\cdot 2^k]^2$. The paths labeled $a$ and $d$ in the lower part of the figure are part of a dual closed path enclosing $[-2^k, 2^k]^2$ (see Figure \ref{fig: main-Ek}). The paths $b$ and $c$ are part of an arc containing on the order of $2^{2K} \pi_3(2^K)$ points, which all lie on the lowest crossing $l_n$ if the box $[-2^k,2^k]^2$ contains an edge on the lowest crossing. The two red boxes ($B_1$ on the left, $B_2$ on the right) each contain a five-arm point. These five-arm points $\star_1$ and $\star_2$ are connected by an open arc, the shortcut that bypasses the path between $b$ and $c$. The latter is also depicted in blue in Figure \ref{fig: details}.}
\label{fig: small}
\end{figure}

We denote by $E_k(e) = E_k(e,\epsilon,\delta)$ the event $\tau_{-e_x}E_k(\epsilon,\delta)$, that is, the event that $E_k$ occurs in the configuration $(\omega_{e+e_x})_{e\in\mathcal{E}(\mathbb{Z}^2)}$ translated by the coordinates of the lower-left endpoint $e_x$ of the edge $e$.

Two properties of $E_k(e)$ which will be crucial for the rest of the proof are:
\begin{enumerate}
\item If $E_k(e)$ occurs for some $k$ and $e$ lies on $ l_n$, then $\mathcal{S}(e,\delta\epsilon)\neq \emptyset$ . (See Proposition \ref{prop: implies-S}.)
\item We have the following lower bound for the probability of $E_k$, assuming a bound of the form \eqref{eqn: Esk-def}, expressing a length gain of $\delta$: there is a constant $C>0$ such that
\[\mathbf{P}(E_k\mid A_3(2^R))\ge C\epsilon^4.\]
for any $R\ge K$. (See Proposition \ref{prop: e4-lwr} and \eqref{eqn: true-triple-c}.)
\end{enumerate}

\subsection{Connections in $E_k$}
\label{sec: Ek}
In this section, we enumerate all the conditions for the occurrence of the event $E_k$. We first define an auxiliary event $E'_k$, which will contain most of the conditions defining $E_k$.

\subsubsection{Inside the box $[-3\cdot 2^k, 3\cdot 2^k]^2$.}\label{sec: insideek}
All connections described below remain in the annulus $[-2^K,2^K]^2\setminus [-2^k,2^k]^2$, so that the events are different for different values of $k$. First, we have a number of conditions depending on the status of edges inside $[-3\cdot 2^k,3\cdot 2^k]^2$ (see Figure \ref{fig: small-2}).

We use the term five-arm point in the following way. The origin is a five-arm point if it has three  vertex-disjoint open paths emanating from 0, one taking the edge $\{0, \mathbf{e}_1\}$ first, one taking the edge $\{0, -\mathbf{e}_1\}$ first, and one taking the edge $\{0, \mathbf{e}_2\}$ first. The two remaining closed dual paths emanate from dual neighbors of 0, one taking the dual edge $\{(-1/2)\mathbf{e}_1+(1/2)\mathbf{e}_2, (-1/2)\mathbf{e}_1 + (3/2)\mathbf{e}_2\}$ first, and the other taking the dual edge $\{(1/2)\mathbf{e}_1-(1/2)\mathbf{e}_2, (1/2)\mathbf{e}_1-(3/2)\mathbf{e}_2\}$ first. We denote the event that the origin is a five-arm point to distance $n$ by $A_5(n)$.

\begin{figure}
\centering
\scalebox{0.8}{\includegraphics[trim={0 0 0 0},clip]{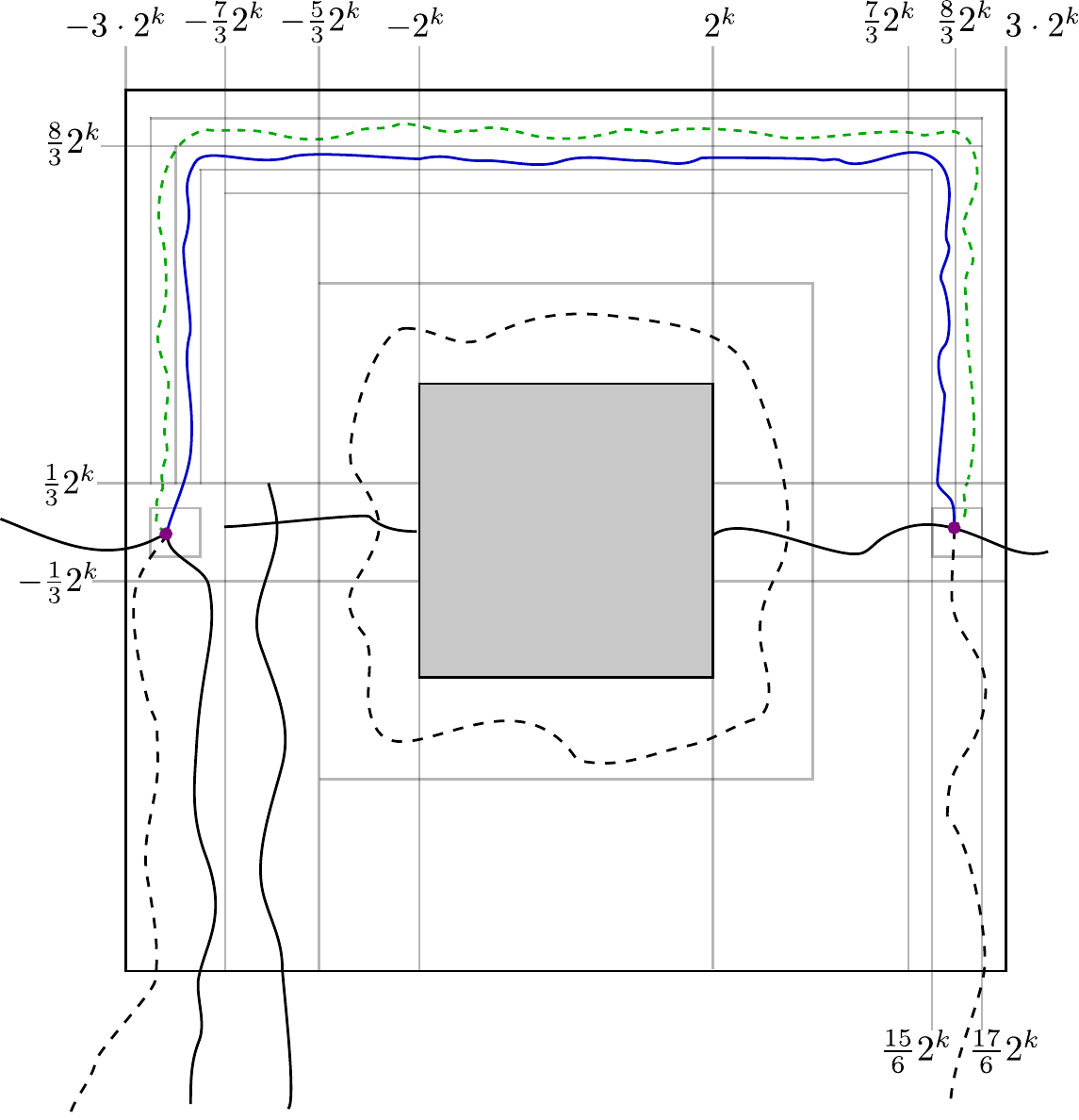}}
\caption{An illustration of the connections inside $[-3\cdot 2^k, 3\cdot 2^k]$ implied by the occurrence of $E_k$. The shortcut (item 8. in the definition of $E_k'$) appears in blue. The shielding dual closed arc (item 7. in the definition of $E_k'$) appears in green. The two five arm points, $\star_1$ in $B_1$ and $\star_2$ in $B_2$ (items 3. and 4.), are depicted by purple dots.}
\label{fig: small-2}
\end{figure}

\begin{enumerate}
\item There is a horizontal open crossing of $[2^k,3\cdot 2^k] \times [-\frac{2^k}{3}, \frac{2^k}{3}] $, and a horizontal open crossing of $[-\frac{7}{3}\cdot 2^k, -2^k]\times[-\frac{2^k}{3}, \frac{2^k}{3}]$.
\item There is a vertical open crossing of $[-\frac{7}{3}\cdot 2^k, -\frac{5}{3}\cdot 2^k]\times [-3\cdot 2^k, \frac{1}{3}\cdot 2^k]$.

\item There is a five-arm point (represented by a purple dot in Figure \ref{fig: small-2}) in the box 
\[B_1:=[-\frac{17}{6}\cdot 2^k, -\frac{15}{6}\cdot 2^k ]\times [-\frac{1}{6}\cdot 2^k, \frac{1}{6}\cdot 2^k],\]
with the following connections, in clockwise order:
\begin{enumerate}
\item a closed dual arm connected to $[-\frac{17}{6}\cdot 2^k, -\frac{8}{3}\cdot 2^k]\times \{\frac{1}{3}\cdot 2^k\}$,
\item an open arm connected to $[-\frac{8}{3}\cdot 2^k, -\frac{15}{6}\cdot 2^k]\times \{\frac{1}{3}\cdot 2^k\}$,
\item an open arm connected to  $[-\frac{8}{3}\cdot 2^k, -\frac{7}{3}\cdot 2^k]\times \{-\frac{1}{3}\cdot 2^k\}$,
\item a closed dual arm connected to $[-3\cdot 2^k, -\frac{8}{3}\cdot 2^k]\times \{-\frac{1}{3}\cdot 2^k\}$,
\item and an open arm connected to the ``left side'' of the box, $\{-3\cdot 2^k\}\times [-\frac{1}{3}\cdot 2^k, \frac{1}{3}\cdot 2^k]$.
\end{enumerate}
We denote the unique such point in $B_1$ by $\star_1$.

\item There is a five-arm point (represented by a purple dot in Figure \ref{fig: small-2}) in the box 
\[B_2:=[\frac{15}{6}\cdot 2^k, \frac{17}{6}\cdot 2^k]\times [-\frac{1}{6}\cdot 2^k, \frac{1}{6}\cdot 2^k],\]
with the following connections, in clockwise order:
\begin{enumerate}
\item an open arm connected to $[\frac{15}{6}\cdot 2^k,\frac{8}{3}\cdot 2^k]\times \{\frac{1}{3}\cdot 2^k\}$, 
\item a closed dual arm connected to $[\frac{8}{3}\cdot 2^k, \frac{17}{6}\cdot 2^k]\times \{\frac{1}{3}\cdot 2^k\}$, 
\item an open arm connected to the ``right side'' of the box $\{3\cdot 2^k\}\times [-\frac{1}{3}\cdot 2^k, \frac{1}{3}\cdot 2^k]$, 
\item a closed dual arm connected to $[\frac{7}{3}\cdot 2^k, 3\cdot 2^k]\times \{-\frac{1}{3}\cdot 2^k\}$, 
\item and an open arm connected to $\{\frac{7}{3}\cdot 2^k\}\times [-\frac{1}{3}\cdot 2^k,\frac{1}{3}\cdot 2^k]$.
\end{enumerate}
We denote the unique such  point in $B_2$ by $\star_2$.

\item There is a closed dual circuit with two open defects around the origin inside the annulus $[-\frac{5}{3}2^k,\frac{5}{3}2^k]^2\setminus [-2^k,2^k]$. One of the defects is in the box $[-\frac{5}{3}2^k, -2^k]\times [-\frac{1}{3}2^k, \frac{1}{3}2^k]$, and the other is in $[2^k,\frac{5}{3}2^k]\times [-\frac{1}{3}2^k, \frac{1}{3}2^k]$.

\item There is an open vertical crossing of $[-3\cdot 2^k, -\frac{7}{3}\cdot 2^k]\times [-3\cdot 2^k,-\frac{1}{3}\cdot 2^k]$, connected to the open arm that emanates from the five-arm point in $B_1$ and lands in $[-\frac{8}{3}\cdot 2^k, -\frac{7}{3}\cdot 2^k]\times \{-\frac{1}{3}\cdot 2^k\}$. There is a dual closed vertical crossing of $[-3\cdot 2^k, -\frac{7}{3}\cdot 2^k]\times [-3\cdot 2^k, -\frac{1}{3}\cdot 2^k]$, connected to the closed dual arm that lands in  $[-3\cdot 2^k, -\frac{8}{3}\cdot 2^k]\times \{-\frac{1}{3}\cdot 2^k\}$.

\item There is a closed dual vertical crossing of $[\frac{7}{3}\cdot 2^k, 3\cdot 2^k]\times [-3\cdot 2^k, -\frac{1}{3}\cdot 2^k]$, connected to the dual arm that lands in $[\frac{7}{3}\cdot 2^k, 3\cdot 2^k]\times \{-\frac{1}{3} \cdot 2^k\}$.

\item There is a closed dual arc (the shield, in green in Figure \ref{fig: small-2}) in the half-annulus 
\begin{equation}\label{eqn: Vktilde-def}
\tilde{V}(k):=\bigg[ [-\frac{17}{6}\cdot 2^k, \frac{17}{6}\cdot 2^k]\times [\frac{-1}{6}\cdot 2^k,\frac{17}{6}\cdot 2^k]  \bigg] \setminus (-\frac{8}{3} \cdot 2^k, \frac{8}{3} \cdot 2^k)^2
\end{equation}
connecting the closed dual paths from the two five-arm points in items 3 and 4.

\item There is an open arc (the shortcut, in blue in Figure \ref{fig: small-2}) in the region 
\begin{equation}\label{eqn: Uktilde-def}
\tilde{U}(k):=\bigg[ [-\frac{8}{3} \cdot 2^k, \frac{8}{3} \cdot 2^k]\times [-\frac{1}{6}\cdot 2^k, \frac{8}{3} \cdot 2^k]\bigg] \setminus (-\frac{15}{6} \cdot 2^k, \frac{15}{6} \cdot 2^k)^2,
\end{equation}
connecting the open paths from the two five-arm points in items 3 and 4 which land on the line $\{(x,\frac{1}{3} \cdot 2^k) : x \in \mathbb{Z}\}$.
\end{enumerate}

\subsection{The box $[-2^K,2^K]^2$ and the large detoured path}\label{sec: ldetouredpath}
The following connections occur in the box $[-2^K,2^K]^2$. Refer to Figure \ref{fig: details} for an illustration and the relevant scales.

\begin{figure}
\centering
\scalebox{0.6}{\includegraphics[trim={0 0 0 0},clip]{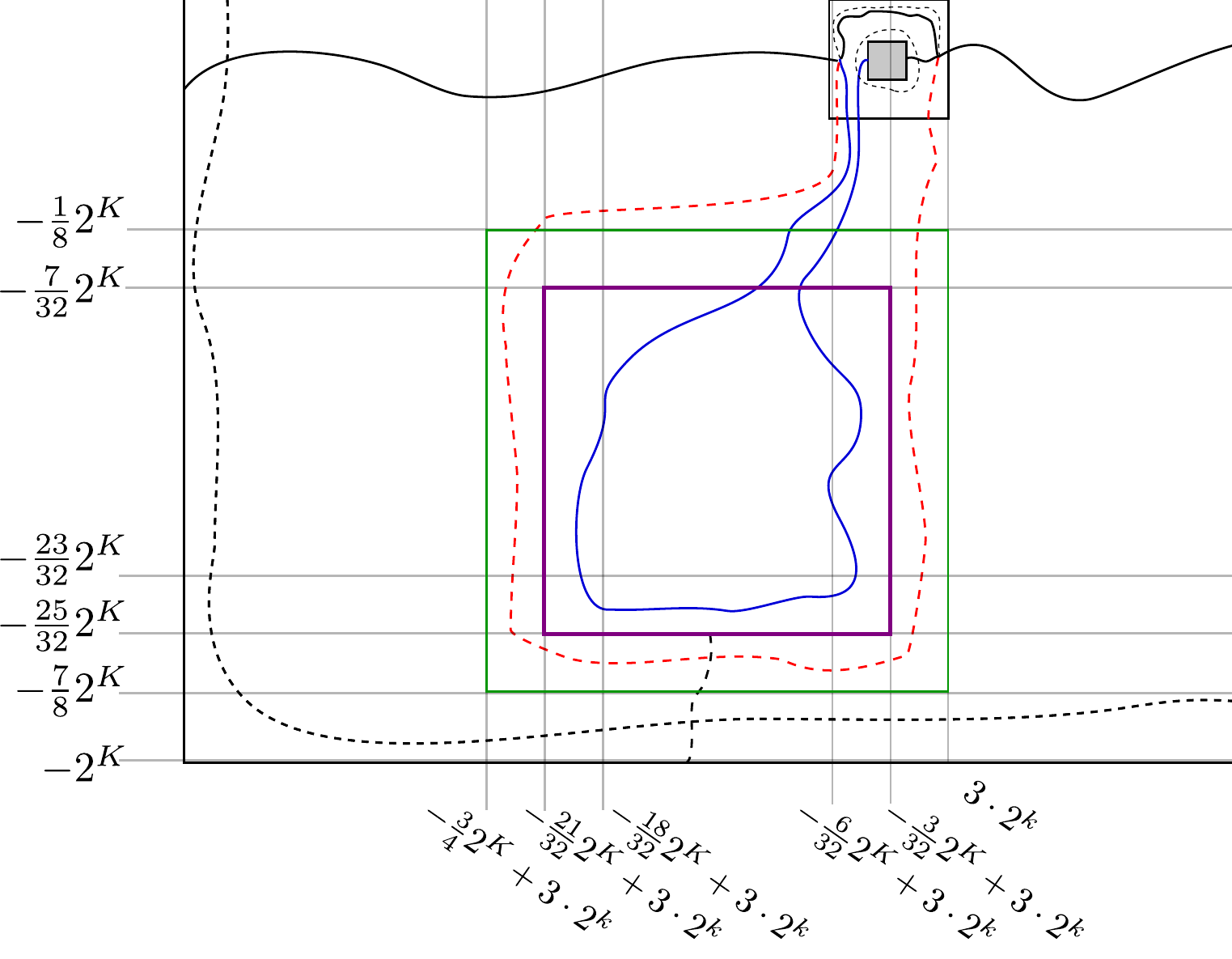}}
\caption{An illustration of the connections on scale $2^K$ implied by $E_k$. Note that this schematic is not to scale: in particular, the scale $2^k$ is much smaller than $2^K$. For example, the right edge of the purple box is in reality far to the left of $[-3\cdot 2^k, 3\cdot 2^k]^2$. }
\label{fig: details}
\end{figure}

\begin{enumerate}
\setcounter{enumi}{9}
\item There is a closed dual arc $\tau$ around $D_2$ in $D_1\setminus D_2$,
where
\[D_1:=[-\frac{3}{4}2^K+3\cdot 2^k, 3\cdot 2^k] \times [-\frac{7}{8}2^K, -\frac{1}{8}2^K].\]
is depicted in green in Figure \ref{fig: details}, and
\[D_2:=[-\frac{21}{32}2^K + 3\cdot 2^k,-\frac{3}{32}2^K+3\cdot 2^k]\times [-\frac{25}{32}2^K, -\frac{7}{32}2^K]\]
is in purple in Figure \ref{fig: details}. The path $\tau$ joins
 $[-\frac{3}{4}2^K+3\cdot 2^k, -\frac{21}{32}2^K+3\cdot 2^k]\times \{-\frac{2^K}{8}\}$ to $[-\frac{3}{32}2^K+3\cdot 2^k, 3\cdot 2^k]\times\{- \frac{2^K}{8}\}$. $\tau$ is the part of the path represented in red in Figure \ref{fig: details} that lies inside $D_1$.
\item There are two disjoint closed paths inside $[-\frac{3}{4}2^K+3\cdot 2^k,3\cdot 2^k]\times ((-\infty,0])$: one joining the endpoint of the vertical closed crossing on $[-3\cdot 2^k, -\frac{8}{3}\cdot 2^k]\times \{-3\cdot 2^k\}$ to the endpoint of the closed dual arc in the previous item on  $[-\frac{3}{4}2^K+3\cdot 2^k, -\frac{21}{32}2^K+3\cdot 2^k]\times \{-\frac{2^K}{8}\}$, the second, joining the endpoint of the vertical crossing on $[\frac{7}{3}\cdot 2^k,3\cdot 2^k]\times \{-3\cdot 2^k\}$, to the endpoint of the closed dual arc in the previous item on $[-\frac{3}{32}2^K+3\cdot 2^k, 3\cdot 2^k]\times\{- \frac{2^K}{8}\}$. The union of these two paths is represented in Figure \ref{fig: details} as the part of the red path outside of the box $D_1$.
\item There is a horizontal open crossing of the rectangular box
\begin{equation}\label{eqn: R-def}
R:=[-\frac{18}{32}\cdot 2^K+3\cdot 2^k, -\frac{6}{32}\cdot 2^K+3\cdot 2^k]\times [-\frac{25}{32}\cdot 2^K, -\frac{23}{32}\cdot 2^K].
\end{equation}
This is the part of the path appearing in blue in Figure \ref{fig: details} which lies in $R$.
\item There are two disjoint open paths contained in $[-\frac{21}{32}\cdot 2^K+ 3\cdot 2^k,-\frac{3}{32}\cdot 2^K+3\cdot 2^k]\times [-\frac{25}{32}\cdot 2^K,0]\setminus R$,
\begin{enumerate} 
\item one joining the endpoint of the open vertical crossing of $[-\frac{8}{3}\cdot 2^k, -\frac{7}{3}\cdot 2^k]\times [-3\cdot 2^k,-\frac{1}{3}\cdot 2^k]$  to the endpoint of the open crossing of $R$ (see \eqref{eqn: R-def}) on the left side of $R$, 
\item one joining the endpoint of the open vertical crossing of $[-\frac{7}{3}\cdot 2^k,-\frac{5}{3}\cdot 2^k ]\times [-3\cdot 2^k,-\frac{1}{3}\cdot 2^k]$ to  endpoint of the open crossing of $R$ on right side of $R$.
\end{enumerate}
The union of these two paths is the part of the path depicted in blue in Figure \ref{fig: details} lying outside of $R$.
\item There is dual closed vertical crossing of $[-\frac{18}{32}\cdot 2^K+3\cdot 2^k, -\frac{6}{32}\cdot 2^K+3\cdot 2^k]\times [-2^K, -\frac{25}{32}\cdot 2^K]$.
\end{enumerate}

Finally, we finish the description of the event by adding two more macroscopic conditions: 
\begin{enumerate}\setcounter{enumi}{14}
\item There is a dual closed circuit with two open defects around the origin in $[-2^K, 2^K]^2\setminus [-\frac{7}{8}\cdot 2^K,\frac{7}{8}\cdot 2^K]^2$. One of the defects is contained in $[-2^K, -\frac{7}{8}\cdot 2^K]\times [-\frac{2^K}{8},\frac{2^K}{8}]$, and the other in $[\frac{7}{8}\cdot 2^K, 2^K]\times [-\frac{2^K}{8},\frac{2^K}{8}]$.
\item There are two vertex-disjoint open arms: one from the left side $\{-3\cdot 2^k\}\times [-3\cdot 2^k, 3\cdot 2^k]$ of the box $[-3\cdot 2^k, 3\cdot 2^k]^2$ (touching the open arm from the five-arm point that lands there) to the left side of $[-2^K,2^K]^2$, the other from the right side $\{3\cdot 2^k\}\times [-3\cdot 2^k, 3\cdot 2^k]$ (touching the corresponding open arm from the five-arm point there) to the right side of $[-2^K,2^K]^2$.
\end{enumerate}
We denote by $E'_k=E'_k(\epsilon)$ the intersection of the events listed in items 1-16 above. We also let $E'_k(e,\epsilon)=\tau_{-e_x}E'_k(\epsilon)$ be the event translated by $e$.

By considering three-arm points in the rectangle $R$ (defined in \eqref{eqn: R-def}), we have the following proposition. See Proposition 5.4 in \cite{DHSchemical1} for a more detailed treatment of a similar construction.
\begin{prop}\label{prop: c0-prop}
On $E'_k$, let $N_K$ be the number of edges in $R$ connected to the open paths from item 12. by two vertex-disjoint open paths inside $R$ which moreover are connected inside $R$ by a dual closed path to the dual path in item 13. There is a constant $c_0>0$ such that for $\epsilon \in (0,1/4)$ and any $k\ge 1$, one has
\begin{equation}\label{eqn: N-lwr-bd}
\mathbf{P}(\{N_K\ge c_0 2^{2K}\pi_3(2^K) \}\cap E'_k) \ge c_0\mathbf{P}(E'_k).
\end{equation}
\begin{proof}
By the second moment method, one shows that the number of edges in a central subrectangle of $R$ with two disjoint open connections to the left and right side of $R$ and a dual closed connection to the bottom side of $R^*$ is bounded below by $c_0 2^{2K}\pi_3(2^K)$ with uniformly positive probability. By gluing constructions using the Russo-Seymour-Welsh (RSW) and generalized Fortuyn-Kasteleyn-Ginibre (FKG) inequalities, the open arms are connected to the open connections from $E'_k$ from the left and right side, and the closed arm is connected to the closed connection from item 13. 
\end{proof}
\end{prop}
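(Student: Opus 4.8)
The plan is the classical second-moment method applied inside $R$, combined with a conditioning on the configuration outside $R$ that lets one graft the resulting edges onto $E'_k$. First I would fix a sub-rectangle $\tilde R \subset R$ of the same shape as $R$, concentric with it and scaled down by a fixed factor, so that $\dist(e,\partial R)\asymp 2^K$ for every edge $e\in\tilde R$. For such an edge let $V(e)$ be the event that $e$ is open, that there are two vertex-disjoint open paths inside $R$ joining $e$ to the left and to the right sides of $R$, and that $e^*$ is joined to the bottom side of $R$ by a closed dual path inside $R$; set $M=\sum_{e\in\tilde R}\mathbf{1}_{V(e)}$. Since $V(e)\subseteq A_3(e,\dist(e,\partial R))$, \eqref{eqn: leslie} gives $\mathbf{P}(V(e))\le C\pi_3(2^K)$; the matching lower bound comes from building a well-separated three-arm configuration out to distance of order $2^K$ and extending its arms to $\partial R$ by RSW, so $\mathbf{E}[M]\asymp 2^{2K}\pi_3(2^K)$. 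For the variance I would use the standard two-point estimate $\mathbf{P}(V(e)\cap V(f))\le C\,\pi_3(|e-f|)\,\pi_3(2^K)$ — three disjoint arms inside each of the disjoint balls $B(e,|e-f|/3)$ and $B(f,|e-f|/3)$, then a single three-arm configuration from scale $|e-f|$ out to scale $2^K$, by quasi-multiplicativity — which gives
\[
\mathbf{E}[M^2] \;\le\; \mathbf{E}[M] \;+\; C\,2^{2K}\pi_3(2^K)\sum_{j=0}^{K}2^{2j}\pi_3(2^j) \;\le\; C'\big(2^{2K}\pi_3(2^K)\big)^2 ,
\]
the last step being a geometric summation via \eqref{eqn: leslie} in the form $\pi_3(2^j)\le C_5\,2^{\beta(K-j)}\pi_3(2^K)$ with $\beta<1$ (the term $\mathbf{E}[M]$ being absorbed since $2^{2K}\pi_3(2^K)$ is bounded below). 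The Paley--Zygmund inequality then yields a constant $c_1>0$ with $\mathbf{P}\big(M\ge c_1\,2^{2K}\pi_3(2^K)\big)\ge c_1$, and this event is measurable with respect to the edges inside $R$.

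Next I would bring in $E'_k$. Write $I_{12}$ for the event of item 12 (a horizontal open crossing of $R$). Inspecting items 1--16, the only defining condition of $E'_k$ that involves edges strictly inside $R$ is $I_{12}$; all the others, in particular the paths of items 13--14 to which $N_K$ refers, concern edges outside $R$. So I would condition on the $\sigma$-field $\mathcal{F}$ generated by the edges outside $R$: on the event $W\in\mathcal{F}$ that the ``outside'' conditions of $E'_k$ hold, the item-13 open paths and the item-14 dual closed crossing are present and meet $\partial R$ at certain a priori arbitrary positions. Inside $R$, independently of $\mathcal{F}$, I run the construction above and then glue, by RSW and the generalized FKG inequality: a transverse open crossing of a square-shaped strip running along the left (resp.\ right) side of $R$ is connected to all the left (resp.\ right) open arms of the edges counted by $M$ and, by a short further connection, to the item-13 paths — this crossing together with any one such edge also supplies $I_{12}$ — while a transverse closed dual crossing of a strip along the bottom of $R$ is connected to the corresponding closed dual arms and to the item-14 crossing. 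Each gluing costs only a constant factor and is valid uniformly in the landing positions, so $\mathbf{P}\big(\{N_K\ge c_0\,2^{2K}\pi_3(2^K)\}\cap I_{12}\mid\mathcal{F}\big)\ge c_0$ on $W$; integrating and using $\mathbf{P}(E'_k)=\mathbf{P}(W)\,\mathbf{P}(I_{12})\le\mathbf{P}(W)$ gives $\mathbf{P}\big(\{N_K\ge c_0\,2^{2K}\pi_3(2^K)\}\cap E'_k\big)\ge c_0\,\mathbf{P}(E'_k)$.

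The second-moment estimate is entirely routine. The step I expect to require the most care is the gluing: one must route the arms of the well-spread three-arm edges to the prescribed paths of $E'_k$ uniformly in their random landing positions on $\partial R$, and check that none of the remaining conditions of $E'_k$ — all of which lie outside $R$ — can obstruct this. This is precisely the kind of connection bookkeeping carried out in detail for the analogous construction in \cite[Proposition~5.4]{DHSchemical1}, which I would invoke rather than reproduce.
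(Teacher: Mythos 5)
Your proof follows essentially the same strategy as the paper's: a second-moment argument inside a central sub-rectangle of $R$ (with the standard two-point estimate and geometric summation via \eqref{eqn: leslie}) yields $\gtrsim 2^{2K}\pi_3(2^K)$ three-arm edges there with uniformly positive probability, and an RSW / generalized-FKG gluing then grafts these edges onto the prescribed paths of $E'_k$, with the detailed bookkeeping deferred, as the paper also does, to \cite[Proposition~5.4]{DHSchemical1}. The only slip is the claimed exact factorization $\mathbf{P}(E'_k)=\mathbf{P}(W)\,\mathbf{P}(I_{12})$, which does not hold because item~13 pins the outside open paths to the endpoints of the item-12 crossing of $R$ (so the inside and outside of $R$ are coupled along $\partial R$); this is harmless since all you actually need is the inclusion $E'_k\subseteq W$, and your gluing step already routes the arms to the item-13/item-14 landing points uniformly in those landings.
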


On $E_k'$, let $\mathfrak{s}_k$ be the minimal length open path connecting the two five-arm points $\star_1$ and $\star_2$ in the $U$-shaped region 
\begin{equation}\label{eqn: Udef}
U(k):=\left[ [-3\cdot 2^k, 3 \cdot 2^k] \times [-\frac{1}{3} \cdot 2^k, 3 \cdot 2^k]\right] \setminus (-\frac{7}{3} \cdot 2^k, \frac{7}{3} \cdot 2^k)^2.
\end{equation}

\begin{lma}\label{prop: if-Exp}
Let $c_0$ be from Proposition~\ref{prop: c0-prop}. If for some $\epsilon \in (0,1/4)$, $\delta>0$ and $k \geq 1$ one has
\begin{equation}\label{eqn: Esk-def}
\mathbf{E}[ \#\mathfrak{s}_k \mid E_k']\le \delta 2^{2k}\pi_3(2^k),
\end{equation}
then
\begin{equation}\label{eqn: Ek-half}
\mathbf{P}(\# \mathfrak{s}_k \le 2(\delta/c_0) 2^{2k}\pi_3(2^k) \mid N_K\ge c_0 2^{2K}\pi_3(2^K), E_k')\ge 1/2.
\end{equation}
\begin{proof}
Let $\mathcal{N}=\{N_K\ge  c_0 2^{2K}\pi_3(2^K) \}$. 
We have
\begin{align}
\mathbf{E}[\# \mathfrak{s}_k \mid \mathcal{N}, E_k'] \le &~\frac{\mathbf{E}[\# \mathfrak{s}_k \mathbf{1}_{E_k'}]}{\mathbf{P}(\mathcal{N}\cap E_k')} \nonumber \\
= &~ \mathbf{E}[\# \mathfrak{s}_k \mid E_k'] \cdot \frac{\mathbf{P}(E_k')}{\mathbf{P}(\mathcal{N}\cap E_k')}.\label{eqn: kellogs}
\end{align}
By \eqref{eqn: N-lwr-bd}, the second factor is bounded above by $1/c_0$ for some constant $c_0>0$, so \eqref{eqn: kellogs} is bounded by
\[\frac{\delta}{c_0}2^{2k}\pi_3(2^k).\]
The result then follows by Markov's inequality.
\end{proof}
\end{lma}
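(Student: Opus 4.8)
The plan is to bound the conditional expectation $\mathbf{E}[\#\mathfrak{s}_k \mid \mathcal{N}, E_k']$, where $\mathcal{N} = \{N_K \ge c_0 2^{2K}\pi_3(2^K)\}$, and then convert this bound into the claimed probability estimate via Markov's inequality. First I would observe that $\#\mathfrak{s}_k$ is well-defined on $E_k'$: item 8 in the definition of $E_k'$ supplies an open arc joining the five-arm points $\star_1$ and $\star_2$ inside $U(k)$, so a minimal-length such path exists, and $N_K$ is a bona fide function of the configuration, so conditioning on $\mathcal{N}\cap E_k'$ is legitimate.

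For the expectation bound, the key point is simply that $\mathcal{N}\cap E_k' \subseteq E_k'$ together with $\#\mathfrak{s}_k \ge 0$, which gives $\mathbf{E}[\#\mathfrak{s}_k \mathbf{1}_{\mathcal{N}\cap E_k'}] \le \mathbf{E}[\#\mathfrak{s}_k \mathbf{1}_{E_k'}]$. Dividing through by $\mathbf{P}(\mathcal{N}\cap E_k')$ and rewriting yields $\mathbf{E}[\#\mathfrak{s}_k \mid \mathcal{N}, E_k'] \le \mathbf{E}[\#\mathfrak{s}_k \mid E_k']\cdot \mathbf{P}(E_k')/\mathbf{P}(\mathcal{N}\cap E_k')$. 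Now Proposition~\ref{prop: c0-prop}, in the form \eqref{eqn: N-lwr-bd}, states exactly that $\mathbf{P}(\mathcal{N}\cap E_k') \ge c_0\, \mathbf{P}(E_k')$, so that last ratio is at most $1/c_0$; combined with the hypothesis \eqref{eqn: Esk-def} this gives $\mathbf{E}[\#\mathfrak{s}_k \mid \mathcal{N}, E_k'] \le (\delta/c_0)\, 2^{2k}\pi_3(2^k)$. Applying Markov's inequality to the nonnegative variable $\#\mathfrak{s}_k$ under the conditional law $\mathbf{P}(\,\cdot \mid \mathcal{N}, E_k')$ at threshold $2(\delta/c_0)\,2^{2k}\pi_3(2^k)$ shows the threshold is exceeded with conditional probability at most $1/2$, and the complement is precisely the event in \eqref{eqn: Ek-half}.

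I do not expect a genuine obstacle here; the statement is essentially a bookkeeping consequence of \eqref{eqn: N-lwr-bd} and Markov. The only things to watch are the direction of the comparison when passing to the conditional expectation — one divides by the smaller probability $\mathbf{P}(\mathcal{N}\cap E_k')$, which is why the factor $1/c_0$ rather than $c_0$ shows up — and the harmless factor of $2$ built into the threshold in \eqref{eqn: Ek-half} to absorb the Markov loss.
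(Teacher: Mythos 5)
Your proposal is correct and follows essentially the same route as the paper: bound $\mathbf{E}[\#\mathfrak{s}_k \mid \mathcal{N}, E_k']$ by $\mathbf{E}[\#\mathfrak{s}_k \mid E_k'] \cdot \mathbf{P}(E_k')/\mathbf{P}(\mathcal{N}\cap E_k')$, use \eqref{eqn: N-lwr-bd} to bound the ratio by $1/c_0$, and finish with Markov. The only (harmless) addition is your explicit remark that $\#\mathfrak{s}_k$ is well-defined on $E_k'$ and the spelled-out justification of the inequality $\mathbf{E}[\#\mathfrak{s}_k \mathbf{1}_{\mathcal{N}\cap E_k'}] \le \mathbf{E}[\#\mathfrak{s}_k \mathbf{1}_{E_k'}]$, which the paper leaves implicit.
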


We now define the event $E_k = E_k(\epsilon,\delta)$ as
\begin{equation}
E_k(\epsilon,\delta):= \{\# \mathfrak{s}_k  \le 2(\delta/c_0)2^{2k}\pi_3(2^k)\}\cap \{N_K\ge c_02^{2K}\pi_3(2^K)\} \cap E_k',
\end{equation}
as well as the translated event $E_k(e) = E_k(e,\epsilon,\delta)$ as
\begin{equation}
\label{eqn: Ek-def}
E_k(e,\epsilon,\delta):= \tau_{-e_x}E_k(\epsilon, \delta).
\end{equation}

The key property of $E_k$ is the following. For $e \in B(n)$, write $d = \text{dist}(e,\partial B(n))$.
\begin{prop}\label{prop: implies-S}
There is an $\epsilon_0$ such that if $\epsilon < \epsilon_0$, $\delta>0$, and $k$ satisfies $1 \leq k \leq \log d - \lfloor \log \frac{1}{\epsilon}\rfloor$,  the occurrence of
\[\{e\in l_n \}\cap E_k(e,\epsilon,\delta)\] 
implies that there is an $\epsilon\delta$-shortcut around $e$, i.e. $\mathcal{S}(e,\kappa)\neq \emptyset$ for $\kappa = \epsilon \cdot \delta$. 
\begin{proof}
  It follows from the construction of the event $E_k(e,\epsilon,\delta)$ that there a shortcut around $e$. See \cite[Sections 4.5 and 7]{DHSchemical1} for a detailed proof of a similar claim. The event $E_k$ there is defined differently, but the arguments remain essentially the same. For the path $r$, we choose a path in $\tau_{e_x}U(k)$ between the two five-arm points in items 3. and 4. of the definition of $E_k'$ above with length less than $2(\delta/c_0) 2^{2k}\pi_3(2^k)$. On the other hand because the edges found in Proposition \ref{prop: c0-prop} are on the lowest crossing, the portion $\tau$ of $l_n$ containing $e$ between the two five-arm points has total volume greater than or equal to 
\[N_K\ge c_0 2^{2K}\pi_3(2^K).\]
Thus, 
\begin{equation}
\frac{\# r}{\#\tau} \le \delta \frac{(2/c_0) 2^{2k}\pi_3(2^k)}{ c_02^{2K}\pi_3(2^K)}.
\end{equation}
Using \eqref{eqn: leslie}, we have
\[\frac{2^{2k}\pi_3(2^k)}{2^{2K}\pi_3(2^K)} \le C_5 2^{(2-\beta)(k-K)} \leq 2C_5 2^{2-\beta} \epsilon^{2-\beta},\]
where $C_5\ge 1$ is a constant, and $\beta=1-\gamma$, for $\gamma>0$. If
\begin{equation}\label{eqn: epsilon-const-1}
\epsilon^\gamma < \min \left\{ \frac{c_0^2}{8C_5 2^{2-\beta}},1/4^\gamma\right\},
\end{equation}
then we find 
\begin{equation}
\# r < (\epsilon\delta)\cdot \#\tau.
\end{equation}
\end{proof}
\end{prop}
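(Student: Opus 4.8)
Here is how I would prove Proposition~\ref{prop: implies-S}.

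The plan is to exhibit the required $\epsilon\delta$-shortcut explicitly, built from the (translated) path $\mathfrak{s}_k$, and then to verify the five conditions of Definition~\ref{def: shortcuts} with $\kappa = \epsilon\delta$. Passing to the configuration translated by the coordinates $e_x$ of the lower-left endpoint of $e$, we may assume $E_k(\epsilon,\delta)$ occurs and $e$ is incident to the origin; since $k \le \log d - \lfloor \log \frac1\epsilon \rfloor$, the box of side-length $2^K + 3\cdot 2^k$ carrying all the connections of $E_k$, together with the auxiliary rectangle $R$ of Proposition~\ref{prop: c0-prop}, lies inside $B(n)$, so all these events are compatible with the definition of $l_n$ in $[-n,n]^2$. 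We take $r$ to be an open path in the $U$-shaped region $U(k)$ of \eqref{eqn: Udef} joining the five-arm points $\star_1 \in B_1$ and $\star_2 \in B_2$ (with its endpoints placed on $l_n$ as required by condition 2 of Definition~\ref{def: shortcuts}); such a path exists because $E_k'$ supplies an explicit open arc in $\tilde U(k) \subset U(k)$ joining the ``top'' open arms of $\star_1$ and $\star_2$. Taking $r$ to be the shortest such path---essentially $\mathfrak{s}_k$---the definition of $E_k(\epsilon,\delta)$ gives $\#r \le 2(\delta/c_0)\,2^{2k}\pi_3(2^k)$.

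The combinatorial core is to identify the portion $\tau$ of $l_n$ that $r$ bypasses and to check conditions 1--4 of Definition~\ref{def: shortcuts}: that the endpoints of $r$ sit on $l_n$ with horizontal $l_n$-edges on either side and $r$ leaving them upward; that the interior of $r$ lies above $l_n$ (outside $\mathcal{B}(l_n)$); that $r \cup \tau$ is an open circuit with $e \in \tau$; and that a closed dual path $\mathfrak{c}$ above $l_n$, of the form required in condition 4, is present. All of this is forced by the connections defining $E_k'$ together with the characterization that an edge lies on $l_n$ iff it is open, has two vertex-disjoint open arms to the left and right sides of $B(n)$, and a closed dual arm to the bottom of $B(n)$. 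On $\{e \in l_n\}$, the two open arms out of $e$ are trapped inside $[-\frac53 2^k, \frac53 2^k]^2$ by the innermost closed dual circuit with two open defects, so they must exit through its two defects; the surrounding open crossings of $E_k'$ then continue them to the side open arms of $\star_1$ and $\star_2$, which the outer open arms of $E_k'$ extend to the left and right sides of $B(n)$. Meanwhile the ``pendulous'' part of $E_k'$ (the closed dual arcs around the inner purple box, the horizontal open crossing of $R$, the two open arms joining the sides of $R$ to the arms feeding $\star_1$ and $\star_2$, and the outer dual closed structure reaching the bottom of $B(n)$), combined with Proposition~\ref{prop: c0-prop}, produces at least $N_K \ge c_0 2^{2K}\pi_3(2^K)$ edges inside (the translate of) $R$, each with two vertex-disjoint open arms to $\partial B(n)$ and a closed dual arm to the bottom of $B(n)$, and hence all on $l_n$. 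Since $l_n$ is vertex self-avoiding, the portion $\tau$ between the two attachment points near $\star_1$ and $\star_2$ must pass both through $e$ and through all these $R$-edges, so $\#\tau \ge N_K$; and because $r$ stays in the upper region $\tilde U(k)$ while $l_n$ dips down through $R$, the interior of $r$ lies outside $\mathcal{B}(l_n)$ and $r \cup \tau$ is an open circuit, with $\mathfrak{c}$ supplied by the green closed dual shield of $E_k'$ joined to the closed dual arms of $\star_1$ and $\star_2$. The detailed gluing and topology here is essentially the same as what is carried out in \cite[Sections 4.5 and 7]{DHSchemical1}, which we would simply cite.

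It remains to check condition 5, $\#r \le \epsilon\delta\,\#\tau$, a short computation. Combining $\#r \le 2(\delta/c_0)2^{2k}\pi_3(2^k)$ with $\#\tau \ge N_K \ge c_0 2^{2K}\pi_3(2^K)$ gives
\[
\frac{\#r}{\#\tau} \;\le\; \frac{2\delta}{c_0^{2}} \cdot \frac{2^{2k}\pi_3(2^k)}{2^{2K}\pi_3(2^K)} .
\]
Writing $\beta = 1 - \gamma$ as in \eqref{eqn: leslie} and using $K - k = \lfloor \log \frac1\epsilon \rfloor \ge \log \frac1\epsilon - 1$, the bound \eqref{eqn: leslie} gives $\pi_3(2^k)/\pi_3(2^K) \le C_5 2^{\beta(K-k)}$, hence $2^{2k}\pi_3(2^k) / \bigl(2^{2K}\pi_3(2^K)\bigr) \le C_5\,2^{-(2-\beta)(K-k)} \le C_5 (2\epsilon)^{2-\beta}$, so that
\[
\frac{\#r}{\#\tau} \;\le\; \Bigl( \tfrac{2 C_5\, 2^{2-\beta}}{c_0^{2}}\,\epsilon^{\gamma} \Bigr)\,\epsilon\delta .
\]
Choosing $\epsilon_0 > 0$ small enough that the parenthesized factor is $< 1$ for $\epsilon < \epsilon_0$, and also $\epsilon_0 < 1/4$ so that Proposition~\ref{prop: c0-prop} applies, we get $\#r < \epsilon\delta\,\#\tau$. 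Thus $r$ satisfies all five conditions of Definition~\ref{def: shortcuts} with $\kappa = \epsilon\delta$, and in particular $\mathcal{S}(e,\epsilon\delta) \neq \emptyset$.

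The main obstacle is the middle paragraph: verifying that the open and closed arms of $E_k'$ meet up in the prescribed cyclic order around the annulus, that the ``pendulous'' path is genuinely pinned onto $l_n$, and that $r$ stays strictly above $l_n$ so that $r \cup \tau$ really is a circuit with $e \in \tau$. This is where the careful design of $E_k$---and of the box of side $2^K + 3\cdot 2^k$ containing both the shortcut and the detour---is used, and it is the step we would delegate wholesale to \cite{DHSchemical1}. By contrast the length estimate above is short and robust, and the first paragraph is pure bookkeeping.
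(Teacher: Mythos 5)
Your proposal is correct and follows essentially the same route as the paper's own proof: take $r$ to be the short open arc between the five-arm points guaranteed by $E_k$ (with $\#r \le 2(\delta/c_0)2^{2k}\pi_3(2^k)$), take $\tau$ to be the detoured stretch of $l_n$ (with $\#\tau \ge N_K \ge c_0 2^{2K}\pi_3(2^K)$ via Proposition~\ref{prop: c0-prop}), bound the ratio using \eqref{eqn: leslie}, and delegate the topological verification to \cite[Sections 4.5 and 7]{DHSchemical1}. The only differences are cosmetic (your sufficient smallness condition on $\epsilon^\gamma$ is slightly weaker than the paper's \eqref{eqn: epsilon-const-1}, and you spell out a bit more of the trapping argument that the paper leaves to the citation).
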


The following proposition gives a lower bound for the probability of $E_k(e,\epsilon,\delta)$:
\begin{prop}\label{prop: e4-lwr}
There is a constant $c_2>0$ such that for all $\epsilon \in (0,1/4)$ and $k \geq 1$,
\begin{equation}\label{eqn: Ekprime-lwr}
\mathbf{P}(E_k')\ge c_2\epsilon^4.
\end{equation}
In particular, by \eqref{eqn: N-lwr-bd}, if \eqref{eqn: Esk-def} holds for some $\epsilon \in (0,1/2)$, $\delta>0$, and $k \geq 1$, then
\begin{equation}\label{eqn: triplec-lwr-bd}
\mathbf{P}(E_k(e,\epsilon,\delta))\ge \frac{c_0c_2}{2} \epsilon^4.
\end{equation}
\end{prop}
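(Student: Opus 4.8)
\textbf{Proof proposal for Proposition~\ref{prop: e4-lwr}.}

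The plan is to prove \eqref{eqn: Ekprime-lwr} first and then deduce \eqref{eqn: triplec-lwr-bd} as an immediate corollary; the latter requires essentially no new work. For \eqref{eqn: triplec-lwr-bd}, observe that $E_k(\epsilon,\delta) = \{\#\mathfrak{s}_k \le 2(\delta/c_0)2^{2k}\pi_3(2^k)\} \cap \mathcal{N} \cap E_k'$ where $\mathcal{N} = \{N_K \ge c_0 2^{2K}\pi_3(2^K)\}$. By \eqref{eqn: N-lwr-bd} and \eqref{eqn: Ek-half} of Lemma~\ref{prop: if-Exp} (which applies since \eqref{eqn: Esk-def} is assumed), conditionally on $\mathcal{N} \cap E_k'$ the shortness event $\{\#\mathfrak{s}_k \le 2(\delta/c_0)2^{2k}\pi_3(2^k)\}$ has probability at least $1/2$, so $\mathbf{P}(E_k) \ge \frac12 \mathbf{P}(\mathcal{N} \cap E_k') \ge \frac{c_0}{2}\mathbf{P}(E_k') \ge \frac{c_0 c_2}{2}\epsilon^4$; translation invariance of $\mathbf{P}$ then gives the same bound for $E_k(e,\epsilon,\delta)$. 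Thus everything reduces to the uniform lower bound $\mathbf{P}(E_k') \ge c_2\epsilon^4$.

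To bound $\mathbf{P}(E_k')$ from below, I would decompose the sixteen connection requirements defining $E_k'$ into two groups: the ``macroscopic'' connections (items 1--9 inside $[-3\cdot 2^k, 3\cdot 2^k]^2$, and items 10--16 on scale $2^K$), each of which is a crossing, arm, or circuit-with-defects event in a region whose two relevant length scales are comparable up to universal constants, and the events involving the five-arm points $\star_1, \star_2$ (items 3 and 4, and the connections in items 6--8 and 13--14 feeding into them). For the first group, each such event has probability bounded below by a universal constant by RSW and FKG (circuits with a bounded number of defects cost only a further constant factor, by a standard argument — e.g. one conditions on a closed dual circuit and opens a bounded number of its edges), and there are only finitely many of them, so their intersection — using the generalized FKG inequality to handle the mixture of open (increasing) and closed/dual (decreasing) requirements, exactly as in the gluing constructions the paper defers — contributes another universal constant. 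The five-arm factor is where the $\epsilon^4$ comes from: each five-arm point in $B_1$ (resp.\ $B_2$) to distance of order $2^k$ must be further connected out to scale $2^K = 2^{k + \lfloor \log 1/\epsilon\rfloor}$, and the four-arm extension (three open arms and... wait, the five-arm polychromatic exponent) — more precisely, continuing the five arms from scale $2^k$ to scale $2^K$ costs a factor of order $(\pi_5(2^k, 2^K))$, and since the five-arm (polychromatic, three open / two closed) exponent equals $2$, one has $\pi_5(2^k,2^K) \asymp (2^k/2^K)^2 = \epsilon^2$; having two such independent five-arm extensions (one in $B_1$, one in $B_2$, occurring in disjoint regions) yields the factor $\epsilon^2 \cdot \epsilon^2 = \epsilon^4$. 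I would establish this via a second-moment / RSW argument: show the expected number of five-arm points in $B_1$ with the prescribed landing sequence and extendable to scale $2^K$ is of order $2^{2k}\pi_5(2^k,2^K) \cdot (\text{local five-arm density}) \asymp \epsilon^2$ up to constants, control the second moment by quasi-multiplicativity of arm probabilities, and conclude a lower bound of order $\epsilon^2$ on the existence of at least one such point; uniqueness of $\star_1$ in $B_1$ (claimed in item 3) follows because two five-arm points in the same small box would force a six-arm event, which has probability tending to zero faster.

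The main obstacle I anticipate is bookkeeping the interface between the ``local'' five-arm structure near $B_1, B_2$ and the ``global'' connections (items 6--8 route the arms from $\star_1,\star_2$ out to the sides of $[-3\cdot 2^k,3\cdot 2^k]^2$ and down to scale $2^K$ through the detoured-path construction) so that all the events can be simultaneously realized and their probabilities multiplied — i.e., verifying that the prescribed arms land in the designated boundary segments and can be glued to the macroscopic crossings without the increasing and decreasing requirements conflicting. This is precisely the kind of RSW/FKG gluing the paper says it omits, and the honest version of the proof would cite \cite[Proposition 5.4]{DHSchemical1} and \cite[Sections 4.5, 7]{DHSchemical1} for the analogous bookkeeping. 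A secondary technical point is ensuring all constants are genuinely uniform in $k \ge 1$ and in $\epsilon \in (0,1/4)$: the two scales $2^k$ and $2^K$ differ by $\lfloor \log 1/\epsilon\rfloor$, which is at least $2$ for $\epsilon < 1/4$, so quasi-multiplicativity applies with uniform constants, and the finitely many universal-constant factors from the macroscopic events do not depend on $k$ or $\epsilon$ at all; I would be careful to note that the arm-extension estimate $\pi_5(2^k,2^K) \asymp \epsilon^2$ uses only the universal five-arm exponent, valid for bond percolation on $\mathbb{Z}^2$, and not any lattice-specific input.
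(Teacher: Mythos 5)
Your reduction of \eqref{eqn: triplec-lwr-bd} to \eqref{eqn: Ekprime-lwr} via Lemma~\ref{prop: if-Exp} and \eqref{eqn: N-lwr-bd} is correct and matches the paper. For \eqref{eqn: Ekprime-lwr} itself, your plan (RSW plus generalized FKG for the constant-cost connections, second-moment method for the five-arm points, separate accounting of the $\epsilon^4$) is also the paper's plan, but your identification of where the factor $\epsilon^4$ arises contains a genuine error.

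You attribute $\epsilon^4$ to two ``independent five-arm extensions'' from scale $2^k$ to scale $2^K$, one from each of $\star_1,\star_2$, each costing $\asymp\epsilon^2$ by the universal five-arm exponent. This is wrong for two reasons. First, the definition of $E_k'$ does not require all five arms of $\star_1$ or $\star_2$ to extend to scale $2^K$: inspecting items 3--16, only three of the five arms from each $\star_i$ exit $[-3\cdot 2^k,3\cdot 2^k]^2$ (one closed and two open, feeding into the detoured path, the rectangle $R$, and the side of $[-2^K,2^K]^2$); the remaining two stay local as the endpoints of the shortcut and shield. Second, even setting that aside, the extending arm families are not in disjoint regions: all of them cross the single annulus $A(3\cdot 2^k,2^K)$. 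If you literally required five disjoint arms from $\star_1$ and five more from $\star_2$ across that annulus, you would be bounding $\mathbf{P}(E_k')$ from below by a ten-arm annulus probability, which decays far faster than $\epsilon^4$; the product $\epsilon^2\cdot\epsilon^2$ cannot be obtained as described. A related confusion is your classification of all of items 10--16 as macroscopic connections ``in a region whose two relevant length scales are comparable up to universal constants'' --- the arms in items 11, 13, 16 cross a region with scale ratio $\asymp 1/\epsilon$, so they are precisely where the $\epsilon$-dependence enters.

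The paper's accounting, and what the construction actually achieves, is: the five-arm events at $\star_1,\star_2$ to distance of order $2^k$, summed over $B_1,B_2$ by the second-moment argument, contribute a constant (using only $\mathbf{P}(A_5(n))\geq cn^{-2}$ at $n\asymp 2^k$; no $\epsilon$ yet). The $\epsilon^4$ then comes from a single polychromatic six-arm event (two closed, four open) across the annulus between scales $2^k$ and $2^K$, formed by the union of the three extending arms from each $\star_i$. The lower bound $\mathbf{P}(A_6(2^k,2^K))\geq c\epsilon^4$ follows, for instance, from building two disjoint half-plane three-arm events across the annulus, each with universal exponent~$2$. Replacing your two-five-arm-extension step by this six-arm accounting would make the argument sound and align it with the paper.
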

\begin{proof}
The second inequality is a combination of \eqref{eqn: N-lwr-bd}, \eqref{eqn: Ek-half} and \eqref{eqn: Ekprime-lwr}. For the first, we apply the RSW and the generalized FKG inequalities to construct all the connections in the definition of $E_k'$. The construction of the five-arm points in items 3. and 4. uses the second moment method and 
\[\mathbf{P}(A_5(n))\ge Cn^{-2},\]
where $A_5(n)$ is the event that there is a polychromatic five-arm sequence from 0 to distance $n$ (see the definition at the beginning of Section \ref{sec: insideek}). The main probability cost comes from connecting 6 arms (two closed and four open), corresponding to the connections in items 10., 12. and 16. above, and this has probability at least a constant times $\epsilon^4$:
\begin{align*}
\mathbf{P}(E_k') &\geq C \mathbf{P}(\exists~5\text{-arm points in } B_i, i=1,2) \mathbf{P}(A_6(2^k,2^K))\\
& \geq C \sum_{\star_1,\star_2} \mathbf{P}(A_5(2^k))^2 \mathbf{P}(A_6(2^k,2^K))\\
&\ge C\epsilon^4.
\end{align*}
\end{proof}

Since $E_k(e,\epsilon,\delta)$ implies in particular the existence of 3 disjoint connections (2 open, one closed) between $\partial B(e,2^k)$ and $\partial B(e,2^K)$, by a straightforward gluing argument (see \cite[Section 5.5]{DHSchemical1}), we pass from the lower bound \eqref{eqn: triplec-lwr-bd} to the following conditional bound. There is $c_4>0$ such that if \eqref{eqn: Esk-def} holds for some $\epsilon\in (0,1/2)$, $\delta>0,$ and $k \geq 1$, then for all $L \geq 1$,
\begin{equation}\label{eqn: true-triple-c}
\mathbf{P}(E_k(e,\epsilon,\delta)\mid A_3(e,2^L))\ge c_4\epsilon^4.
\end{equation}

\begin{prop}\label{prop: concentration}
There is a constant $\hat{c}$ such that if $\delta_j>0$, $j=1, \ldots, L$ is a sequence of parameters such that for some $\epsilon \in (0,1/4)$,
\begin{equation}
\mathbf{E}[\# \mathfrak{s}_j \mid E_j']\le \delta_j 2^{2j}\pi_3(2^j),
\end{equation}
then for any, $L'<L$,
\begin{equation}\label{eq: lasagna_supremo}
\mathbf{P}(\cap_{j=L'}^L E_j(e,\epsilon,\delta_j)^c \mid A_3(e,2^L))\le 2^{-\hat{c}\frac{\epsilon^4}{\log \frac{1}{\epsilon}}(L-L')}.
\end{equation}
\end{prop}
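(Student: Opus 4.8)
\emph{Strategy.} The proposition is an application of the large-deviation bound of Theorem~\ref{thm: concentration} to a well-separated subsequence of the events $E_j(e,\epsilon,\delta_j)$, combined with the conditional lower bound \eqref{eqn: true-triple-c}. Fix the edge $e$ and write $A_3:=A_3(e,2^L)$ and $F_j:=E_j(e,\epsilon,\delta_j)$. Two features of $F_j$ feed the concentration machinery. First, by construction (see Section~\ref{sec: ldetouredpath} and the figures referenced there) $F_j$ depends only on the edges in the annular region $B(e,c_2 2^{K_j})\setminus B(e,c_1 2^{j})$, where $K_j=j+\lfloor\log\tfrac1\epsilon\rfloor$ as in \eqref{eqn: K-def} and $c_1<1<c_2$ are absolute constants (the factor $c_2$ absorbs the slight overshoot of the detoured path past $[-2^{K_j},2^{K_j}]^2$ noted around Figure~\ref{fig: main-Ek}). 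Second, $F_j$ forces three vertex-disjoint crossings of this region between $\partial B(e,2^j)$ and $\partial B(e,2^{K_j})$ --- two open and one closed-dual --- in the cyclic arrangement that allows them to be glued onto a macroscopic three-arm configuration. Since \eqref{eqn: Esk-def} holds for each $j\in\{1,\dots,L\}$ with $\delta=\delta_j$, estimate \eqref{eqn: true-triple-c} gives the uniform conditional lower bound $\mathbf{P}(F_j\mid A_3)\ge c_4\epsilon^4$ for every $1\le j<L$.

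Next I would select the subsequence. Fix $C_6$ (depending only on $c_1,c_2$) large enough that the annular supports of $F_j$ and $F_{j+M}$ are disjoint for every $j$, where $M:=\lfloor\log\tfrac1\epsilon\rfloor+C_6$; note $M$ is comparable to $\log\tfrac1\epsilon$. Put $j_i:=L'+(i-1)M$ and let $N$ be the largest index with $B(e,c_2 2^{K_{j_N}})\subseteq B(e,2^L)$, so that all relevant scales stay below the conditioning scale $2^L$; then $N\ge c_7(L-L')/\log\tfrac1\epsilon$ for an absolute $c_7>0$ whenever $L-L'$ exceeds a suitable constant multiple of $\log\tfrac1\epsilon$. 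In that case $F_{j_1},\dots,F_{j_N}$ have pairwise disjoint annular supports contained in $B(e,2^L)$, carry the compatible arm structure above, and each satisfies $\mathbf{P}(F_{j_i}\mid A_3)\ge c_4\epsilon^4$, so Theorem~\ref{thm: concentration} applies to $(F_{j_i})_{i=1}^N$ and yields, for an absolute $c_8>0$,
\[
\mathbf{P}\Big(\bigcap_{i=1}^N F_{j_i}^c\,\Big|\,A_3\Big)\le 2^{-c_8\epsilon^4 N}.
\]
Since $\{j_1,\dots,j_N\}\subseteq\{L',\dots,L\}$ we have $\bigcap_{j=L'}^L F_j^c\subseteq\bigcap_{i=1}^N F_{j_i}^c$, and inserting the bound on $N$ gives \eqref{eq: lasagna_supremo} with $\hat c=c_7c_8$. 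In the complementary range, where $L-L'$ is at most a fixed multiple of $\log\tfrac1\epsilon$, one instead bounds the left side of \eqref{eq: lasagna_supremo} by $\mathbf{P}(F_{L'}^c\mid A_3)\le 1-c_4\epsilon^4$, and a routine choice of $\hat c$ together with the threshold separating the two ranges makes this at most the right side there as well; this completes the proof.

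\emph{Main obstacle.} The genuinely delicate point is matching the hypotheses of Theorem~\ref{thm: concentration} to the events $F_j$: one must verify that the three-arm crossing of the annulus guaranteed by $F_j$ has the right colours and cyclic positions so that it is \emph{simultaneously} gluable to the conditioning event $A_3(e,2^L)$, since this compatibility is precisely what drives the near-independence of the $F_{j_i}$ given $A_3$ that Theorem~\ref{thm: concentration} exploits. Everything else --- the scale-separation bookkeeping responsible for the $\epsilon^4/\log\tfrac1\epsilon$ rate, the passage to a subsequence, and the small-gap case --- is routine.
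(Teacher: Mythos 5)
Your overall strategy is the right one (apply Theorem~\ref{thm: concentration} with $N \asymp \log\frac1\epsilon$, re-index so the events lie in disjoint annuli, and count $(L-L')/\log\frac1\epsilon$ admissible scales), but you have mis-read the hypothesis of Theorem~\ref{thm: concentration} and consequently left the key verification out of the argument. Condition B of that theorem is not the bound $\mathbf{P}(E_{10k+5}\mid A_3(2^n))\ge C_0$ that you supply via \eqref{eqn: true-triple-c}; it is the \emph{joint} bound
\[
\mathbf{P}\big(\hat{\mathfrak{C}}_k\cap E_{10k+5}\mid A_3(2^n)\big)\ge C_0,
\]
where $\hat{\mathfrak{C}}_k$ forces \emph{seven} defected circuits (six closed-dual ones with two defects and one open with one defect) in annuli surrounding the one carrying $E_{10k+5}$. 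Those circuits are not decorative: Lemma~\ref{lem: FGdecouple} and the whole ``resetting'' mechanism in the proof of Theorem~\ref{thm: concentration} hinge on them, so one cannot invoke the theorem without establishing the joint lower bound. The paper closes this gap with an explicit (if briefly stated) extra gluing step: starting from $\mathbf{P}(E_j\mid A_3(e,2^L))\ge c_4\epsilon^4$, one lowers $c_4$ and glues in the circuit events to get $\mathbf{P}(E_{10j+5},\hat{\mathfrak{C}}_j\mid A_3(e,2^L))\ge c_4\epsilon^4$ for $0\le j\le L/(10N)-1$. Your proposal never mentions $\hat{\mathfrak{C}}_j$, so this step is missing.

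This also makes your choice of spacing $M=\lfloor\log\frac1\epsilon\rfloor+C_6$ too tight. Disjointness of the supports of consecutive $F_{j_i}$ is not sufficient: Theorem~\ref{thm: concentration} asks for ten $N$-scale annuli between consecutive controlled events (seven of them for $\hat{\mathfrak{C}}_k$, with buffers), which is why the theorem's interesting events sit at indices $10k+5$ and why the proposition's exponent acquires the extra factor of $\log\frac1\epsilon$ coming from taking $N=\lfloor\log\frac1\epsilon\rfloor$ rather than the tighter gap you propose. (You happen to recover the right asymptotic $(L-L')/\log\frac1\epsilon$ in the end, but that is a coincidence of bookkeeping, not a consequence of having verified the theorem's actual hypotheses.) The fix is straightforward given what you already have: replace your ad hoc subsequence by the re-indexing $E_k^{\mathrm{thm}}:=E_{kN}(e,\epsilon,\delta_{kN})$ with $N=\lfloor\log\frac1\epsilon\rfloor$, check condition A directly from \eqref{eqn: K-def}, and then perform one additional RSW/FKG gluing to upgrade \eqref{eqn: true-triple-c} to the joint bound with $\hat{\mathfrak{C}}_k$. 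The remainder of your argument (the small-gap case and the asymptotic count) is fine.
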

\begin{proof}
  % Since $E_j=E_j(e,\epsilon,\epsilon_j)$ depends only on the status of edges in $B(e,2^{j+\lfloor \log \frac{1}{\epsilon}\rfloor})\setminus B(e,2^j)$, an annulus of aspect ratio $\sim 1/\epsilon$, we can find a subcollection of indices $I\subset \{L', \ldots, L\}$ with $\#I \ge \frac{1}{10 \log \frac{1}{\epsilon}}(L-L')$ such that $(E_j)_{j\in I}$ are independent and $\max_{j\in I}j \le L-\lfloor \log \frac{1}{\epsilon}\rfloor$.
Putting $E_j=E_j(e,\epsilon,\delta_j)$, we have by \eqref{eqn: true-triple-c},
\[\mathbf{P}(E_j\mid A_3(e,2^L))\ge c_4\epsilon^4, \quad j,L\ge 1.\]
% The collection $(E_j)_{j\in I}$ is not independent conditionally on $A_3(e,2^L)$. However, we may use Theorem \ref{thm: concentration} with $N= 10\lfloor \log \frac{1}{\epsilon}\rfloor$ and $C_0 = c_4\epsilon^4$ to find a constant $c_5>0$ such that
Furthermore, using the notation of Theorem \ref{thm: concentration}, straightforward gluing constructions can be used to show that, by possibly lowering $c_4$, one has
\[\mathbf{P}(E_{10j+5}, \hat{\mathfrak{C}}_j \mid A_3(e,2^L))\ge c_4\epsilon^4, \text{ for } 0\le j \le \frac{L}{10N}-1,\]
where $N= \lfloor \log \frac{1}{\epsilon}\rfloor$, and $\hat{\mathfrak{C}}_j$ is defined in the first paragraph of Section \ref{sec: concentration}. We then use Theorem \ref{thm: concentration} with $N$ as above, and $C_0= c_4\epsilon^4$ to find a constant $c_5>0$ such that for $L-L'\ge 40 \lfloor \log \frac{1}{\epsilon}\rfloor$, we have
\[\mathbf{P}(\cap_{j= L'}^L E_j^c\mid A_3(e,2^L)) \le 2^{-c_5 \epsilon^4 \frac{L-L'}{\log \frac{1}{\epsilon}}}\]
By possibly decreasing $c_5$ to handle $L'$ with $L'\ge L-40 \lfloor \log \frac{1}{\epsilon}\rfloor$, this implies \eqref{eq: lasagna_supremo}.
\end{proof}

\section{U-shaped regions}\label{sec: ushaped}
Let $\epsilon \in (0,1/2)$ and recall $\mathbf{e}_1=(1,0)$. On the event $E_k(\{0,\mathbf{e}_1\},\epsilon),$ in the box $[-3\cdot 2^k,3\cdot 2^k]^2$ (see Figure \ref{fig: ushaped}), the U-shaped region
\[U(k)=\left[ [-3\cdot 2^k, 3 \cdot 2^k] \times [-\frac{1}{3} \cdot 2^k, 3 \cdot 2^k]\right] \setminus (-\frac{7}{3} \cdot 2^k, \frac{7}{3} \cdot 2^k)^2,\]
contains an open arc on scale $2^k$, joining two five-arm points $\star_1\in B_1$ and $\star_2 \in B_2$. This arc is contained in the smaller region
\[\tilde{U}(k)\cup \tilde{V}(k) \subset U(k)\]
defined in \eqref{eqn: Uktilde-def}.

\begin{figure}
\centering
\scalebox{0.8}{\includegraphics[trim={0 0 0 0},clip]{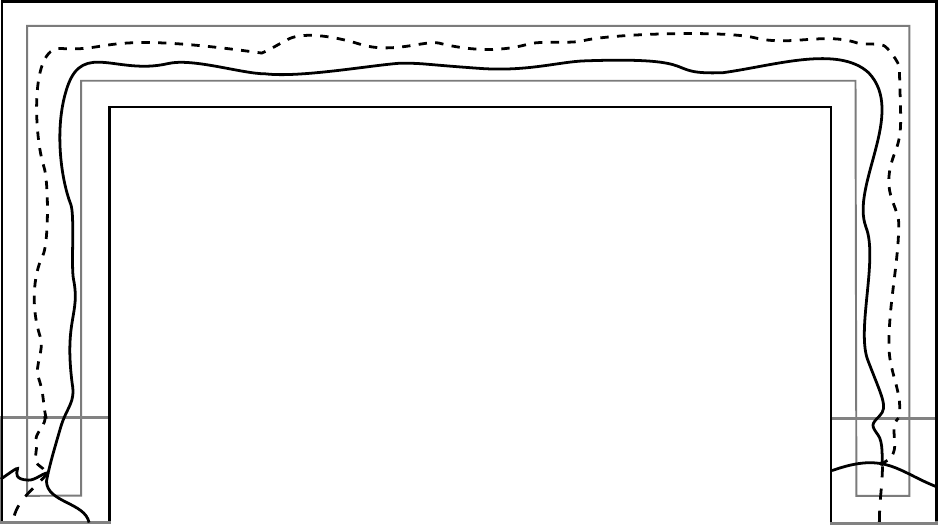}}
\caption{The U-shaped region. The region $\widetilde U \cup \tilde {V}$ appears in grey.}
\label{fig: ushaped}
\end{figure}

Recall that we denote by $\mathfrak{s}_k$ an arc in $U(k)$ connecting the two five-arm points with the minimal number of edges. On $E_k$, we also define $\ell_k$ to be the outermost open arc in $U(k)$ connecting $\star_1$ to $\star_2$, that is, the open arc $s$ in $U(k)$ whose initial and final edges are the vertical edges out of the five-arm points $\star_1$ and $\star_2$, and such that the compact region enclosed by the union of $s$ and the dual closed arc $\mathfrak{c}$ between $\star_1$ and $\star_2$ (item 8. in the definition of $E_k'$, in green in Figure \ref{fig: small-2}) is minimal. Note that since $r \subset \tilde{U}(k) \cup \tilde{V}(k)$, and $E_k'$ implies the existence of an open path connecting $\star_1$ and $\star_2$ inside $\tilde{U}(k)$, we have
\[\ell_k\subset \tilde{V}(k)\cup \tilde{U}(k).\]
In particular:
\begin{equation}\label{eqn: distance-lb}
\mathrm{dist}(\ell_k, \partial U(k))\ge \frac{1}{6}\cdot 2^k.
\end{equation}

The exact analogue of Proposition \ref{prop: implies-S} holds in $U(k)$ with $l_n$ replaced by $\ell_k$, the key point being that belonging to the outermost arc $\ell_k$ is characterized locally by a three-arm event. By comparison with $\#\ell_k$ we have (see \cite[Lemma 5.3]{DHSchemical1} for a similar estimate) for all $k\ge 1$ and $\epsilon \in (0,1/2)$,
\begin{equation}\label{eqn: initial}
\mathbf{E}[\# \mathfrak{s}_k \mid E_k']\le C2^{2k}\pi_3(2^k).
\end{equation}
By Proposition \ref{prop: e4-lwr}, this implies, for $\epsilon>0$
\begin{equation}\label{eqn: ej-lwr-bd}
\mathbf{P}(E_j(e,\epsilon,C)\mid A_3(e,2^j))\ge c_3\epsilon^4.
\end{equation}

To use \eqref{eqn: ej-lwr-bd} to construct a shorter path in the next section, we need the following:
\begin{prop}\label{prop: changecond}
There exists $C>0$ with the following property. For any $x_1\in B_1$, $x_2\in B_2$, $e\in \tilde{U}(k)$, $\epsilon \in (0,1/4)$, $d=2^j$, $j<k$ such that $B(e,d)\subset U(k)$, and $x_i\notin B(e,4d)$, $i=1,2$, and any event $E$ depending only on the status of edges in $B(e,d/100)$, we have
\begin{equation}\label{eqn: changecond}
\mathbf{P}(E\mid E_k', e\in \ell_k, \star_i=x_i, i=1,2)\le C\mathbf{P}(E \mid A_3(e,d))
\end{equation}
\end{prop}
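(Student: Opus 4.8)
The plan is to reduce \eqref{eqn: changecond} to a comparison between two probability measures on the $\sigma$-algebra of edges in $B(e,d/100)$: the measure $\mathbf{P}(\,\cdot \mid E_k', e\in\ell_k, \star_i = x_i)$ on the left, and $\mathbf{P}(\,\cdot\mid A_3(e,d))$ on the right. Both conditionings should force the same local picture near $e$ — namely the three-arm event $A_3(e,d)$ — and everything else in the left-hand conditioning should be expressible as an event living outside $B(e,d)$ (or at least outside $B(e,d/100)$), so that the generalized FKG inequality and a gluing/extension argument give the bound with a universal constant $C$.

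\medskip
\noindent\textbf{Step 1: Decompose the conditioning event into a local and a far part.} First I would observe that on $E_k'\cap\{e\in\ell_k\}\cap\{\star_i=x_i\}$, the membership $e\in\ell_k$ is, as emphasized in the paragraph preceding the proposition, characterized \emph{locally} by a three-arm event from $e$: there are two vertex-disjoint open arms and one closed dual arm from $e$ out to $\partial B(e,d)$ (using $B(e,d)\subset U(k)$ and $x_i\notin B(e,4d)$, the five-arm points and the rest of $\ell_k$ and $\mathfrak{c}$ lie outside $B(e,d)$, so these arms can be continued to the relevant landing zones outside $B(e,d)$ without interacting with the edges of $B(e,d/100)$). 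Thus I would write
\[
E_k'\cap\{e\in\ell_k\}\cap\{\star_i=x_i\} \;=\; A_3(e,d)\cap G,
\]
where $G$ is an event measurable with respect to edges outside $B(e,d)$ together with the ``connection data'' of the three arms at $\partial B(e,d)$ — more precisely, $G$ is an increasing combination of open connections and (dually increasing) closed connections in the region exterior to $B(e,d)$, as all the defining events of $E_k'$, the five-arm events at $x_1,x_2$, and the outermost-arc condition can be realized using only edges outside $B(e,d/100)$.

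\medskip
\noindent\textbf{Step 2: Condition on the interface/arm configuration at $\partial B(e,d)$ and apply FKG.} The standard move here (as in \cite[Section 5]{DHSchemical1}) is to condition further on the precise location of the three arm endpoints on $\partial B(e,d)$ and on the configuration outside $B(e,d)$, and argue that conditionally, the law inside $B(e,d)$ is just the percolation measure conditioned on the three-arm event $A_3(e,d)$ with those prescribed boundary endpoints. Monotonicity in the boundary condition plus RSW then says this is comparable, up to a constant, to $\mathbf{P}(\,\cdot\mid A_3(e,d))$ on the even smaller box $B(e,d/100)$: the extra factor is the (uniformly positive) probability of extending the three arms from $\partial B(e,d/100)$ to $\partial B(e,d)$ with the right endpoints, which is bounded below by a constant by RSW and FKG. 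Taking $E$ to depend only on $B(e,d/100)$ absorbs the boundary-endpoint issue, since any fixed legal endpoint configuration can be connected into a generic $A_3(e,d)$ realization at constant cost.

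\medskip
\noindent\textbf{Main obstacle.} The delicate point — and the step I would spend the most care on — is Step 1: verifying that the full conditioning event genuinely factorizes as $A_3(e,d)\cap(\text{event outside } B(e,d))$, i.e. that conditioning on $e\in\ell_k$ (the \emph{outermost} open arc, not merely on $e$ lying on \emph{some} arc) does not inject extra information about the edges near $e$ beyond the three-arm event. This is where the hypotheses $B(e,d)\subset U(k)$ and $x_i\notin B(e,4d)$ are used: they guarantee that the "outermost-ness" of $\ell_k$ is decided by the geometry in $U(k)\setminus B(e,d)$, so that locally $\ell_k$ looks like any three-arm crossing. I would handle this exactly as the analogous statement is handled in \cite{DHSchemical1} (the characterization of extremal crossings by arm events, cf.\ the discussion around \eqref{eqn: 3-arm-bdy}), citing that paper for the routine topological bookkeeping rather than reproducing it, since the construction of $E_k'$ here differs only cosmetically.
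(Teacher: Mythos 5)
Your proposal follows essentially the same gluing strategy as the paper (which itself defers to \cite[Proposition 5.1]{DHSchemical1} and a figure): recognize that $e\in\ell_k$ is characterized locally by a three-arm event, use $x_i\notin B(e,4d)$ and $B(e,d)\subset U(k)$ to separate the scales, and then glue/extend with RSW and generalized FKG to absorb the landing-zone and boundary-endpoint constraints at constant cost over the buffer annulus $B(e,d)\setminus B(e,d/100)$. Two loosely-stated points to be careful about, both of which you essentially flag: the exact factorization $E_k'\cap\{e\in\ell_k\}\cap\{\star_i=x_i\}=A_3(e,d)\cap G$ does not hold literally, since the far event must be matched to the arm-endpoint data on $\partial B(e,d)$ --- the correct statement conditions on the arm interface as you do in Step 2; and ``monotonicity in the boundary condition'' is not quite the right mechanism since the three arms are of mixed color, so what is really used is quasimultiplicativity plus RSW/FKG gluing (which the paper's Figure~\ref{fig: fivearms} illustrates by separately gluing $A_3$ at $e$, $A_5$ at $\star_1$, and the five-arm annulus event out to $\partial B_1'$).
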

\begin{proof}
This is a gluing argument very similar to \cite[Proposition 5.1]{DHSchemical1}. The main difference is the presence of five-arm points, and the closed dual path, but they do not add any essential difficulty. The case of $e\in B_1$ is illustrated in Figure \ref{fig: fivearms}. The remaining cases are similar or simpler.

\begin{figure}
\centering
\scalebox{0.8}{\includegraphics[trim={0 0 0 0},clip]{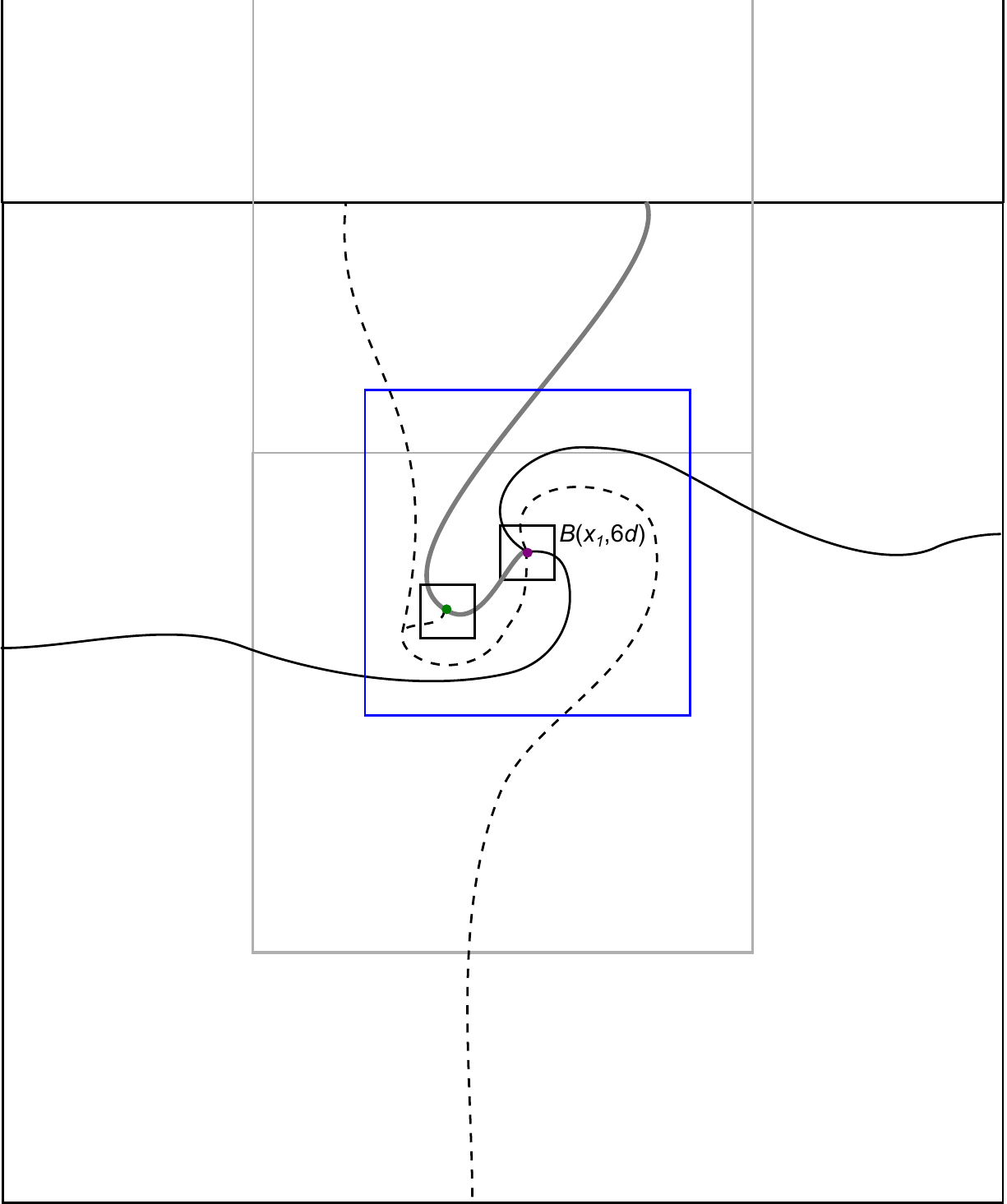}}
\caption{The gluing construction in $B_1'$, the with the same center as $B_1$ and twice the side length, to obtain \eqref{eqn: changecond}. The purple dot represents $\star_1$, and the green dot represents the edge $e$. The small boxes around $\star_1$ and $e$ have side length $d$, and $\mathrm{dist}(e,\star_1)\ge 4d$. The blue box is centered around $\star_1$ and has side length $6d$. The portion of $\ell_k$ inside $B_1$ appears as a thicker grey curve. The event $\{e\in \ell_k, \star_1 = x_1\}$  implies a) the existence of three arms from $e$ to the boundary of the small box centered at $e$; b) five arms from $x$ to the boundary of the small box centered at $x$, and c) five arms from the boundary of the blue box $B(x,6d)$ to $\partial B_1'$, with the landing areas as prescribed in the definition of $E_k'$. Conversely, given a), b) and c), with proper landing areas prescribed, we can make gluing constructions on scale $d$ to force the occurrence of  $\{e\in \ell_k, \star_1 = x\}$.}
\label{fig: fivearms}
\end{figure}
\end{proof}

\section{Iteration}\label{sec: iteration}
Our goal in this section is to derive the following proposition, which we use in Section~\ref{sec: main_proof} to prove the main result, Theorem \ref{thm: main}:
\begin{prop}\label{prop: main}
There exist constants $C, C'$ such that for any $\epsilon>0$ sufficiently small, $L\ge 1$, and $2^k\ge (C\epsilon^{-4} (\log\frac{1}{\epsilon})^2)^L$, we have
\begin{equation}
\mathbf{E}[\#\mathfrak{s}_k \mid E_k']\le (C'\epsilon^{1/2})^L 2^{2k}\pi_3(2^k).
\end{equation}
\end{prop}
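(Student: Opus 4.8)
The plan is to set up an induction on $L$, where the inductive step is essentially the ``central inequality'' of Proposition~\ref{prop: central} combined with the bound on its right-hand side provided by Proposition~\ref{prop: general-iteration}. I will track a family of parameters $\delta_l(i)$, $l \geq 1$, $i \geq 1$, satisfying the hypothesis \eqref{eq: assume_sketch}, namely $\mathbf{E}[\#\mathfrak{s}_l \mid E_l'] \leq \delta_l(i) 2^{2l}\pi_3(2^l)$ for all $l$, and show that each application of the iteration replaces $\delta_l(i)$ by $\delta_l(i+1)$ with $\delta_k(i+1) \sim C'\epsilon^{1/2}\delta_{k-C''}(i)$ for $k$ large, where $C''$ has order $\epsilon^{-4}(\log\frac{1}{\epsilon})^2$. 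The base case is Step~4: using \eqref{eqn: initial} together with the construction of Section~\ref{sec: case_one}, one may take $\delta_l(1)$ equal to a constant for $l \leq s_1(\epsilon)$ and equal to $C\epsilon^{1/2}$ for $l \geq s_1$, where $s_1$ is the constant from \eqref{eqn: getoor}.

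The heart of the argument is the following bookkeeping. Fix the threshold $C'' = C''(\epsilon) \asymp \epsilon^{-4}(\log\frac{1}{\epsilon})^2$ coming from the decoupling range needed in Step~2 (via Proposition~\ref{prop: concentration}, whose rate is $\hat c \epsilon^4/\log\frac{1}{\epsilon}$), and set $C\epsilon^{-4}(\log\frac{1}{\epsilon})^2$ to be a fixed multiple of $C''$ large enough that the estimates below absorb all lower-order errors. I claim by induction on $i$ that if $2^k \geq (C\epsilon^{-4}(\log\frac1\epsilon)^2)^{i}$ then $\delta_k(i) \leq (C'\epsilon^{1/2})^{i}$. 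The base case $i=1$ is the content of Step~4 once $2^k \geq C\epsilon^{-4}(\log\frac1\epsilon)^2 \geq s_1$. For the inductive step, feed the bounds $\mathbf{E}[\#\mathfrak{s}_l \mid E_l'] \leq \delta_l(i) 2^{2l}\pi_3(2^l)$ into Proposition~\ref{prop: central}: the construction of Section~\ref{sec: construction} produces, on scale $k$, a path $\sigma$ built from shortcuts (each saving a factor $\kappa_l(i) = \epsilon\,\delta_l(i)$ on scale $l$) together with uncovered pieces of $\ell_k$. Proposition~\ref{prop: general-iteration} then bounds the resulting expected length by $\delta_k(i+1) 2^{2k}\pi_3(2^k)$ with $\delta_k(i+1) \leq C'\epsilon^{1/2}\,\delta_{k-C''}(i) + (\text{error})$; the error term is the analogue of the $Cn^{-c\epsilon^4/\log\frac1\epsilon}$ contribution in \eqref{eqn: tree}, bounded using \eqref{eqn: leslie} and Proposition~\ref{prop: concentration}, and is negligible compared to $(C'\epsilon^{1/2})^{i+1}$ once $2^k$ exceeds the stated threshold for level $i+1$. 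Since $2^k \geq (C\epsilon^{-4}(\log\frac1\epsilon)^2)^{i+1}$ forces $2^{k-C''} \geq (C\epsilon^{-4}(\log\frac1\epsilon)^2)^{i}$ (here using that $C''$ is at most, say, half of $\log(C\epsilon^{-4}(\log\frac1\epsilon)^2)$ times... more precisely that subtracting $C''$ from $k$ still leaves $k$ above the level-$i$ threshold, which holds because each level multiplies the threshold by the large factor $C\epsilon^{-4}(\log\frac1\epsilon)^2 \gg 2^{C''}$), the inductive hypothesis gives $\delta_{k-C''}(i) \leq (C'\epsilon^{1/2})^{i}$, hence $\delta_k(i+1) \leq (C'\epsilon^{1/2})^{i+1}$ after enlarging $C'$ by a harmless constant factor independent of $\epsilon$.

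Taking $i = L$ then yields exactly $\mathbf{E}[\#\mathfrak{s}_k \mid E_k'] \leq (C'\epsilon^{1/2})^{L} 2^{2k}\pi_3(2^k)$ for $2^k \geq (C\epsilon^{-4}(\log\frac1\epsilon)^2)^{L}$, which is the claim of Proposition~\ref{prop: main}. One should also check uniformity: the constants $C, C'$ must not depend on $L$, which is automatic since the recursion $\delta(i+1) \leq C'\epsilon^{1/2}\delta(i)$ has the same multiplicative constant at every step, and the error terms at step $i+1$ are controlled by the single threshold condition at level $i+1$.

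The main obstacle I anticipate is establishing the central inequality of Proposition~\ref{prop: central} with the correct shift $C''$ — that is, showing that the shortcuts on scale $l$ genuinely propagate savings to scale $k$ with only a loss of $C''$ scales rather than a loss that grows with $i$. This requires (i) the decoupling furnished by Proposition~\ref{prop: concentration}, so that on $E_k'$ one can find, with overwhelming conditional probability, a scale $l \in [k - C'', k]$ at which a shortcut event $E_l(e,\epsilon,\delta_l)$ occurs around a typical edge $e \in \ell_k$; (ii) the nesting/disjointness of shortcuts (from Step~1), so that the lengths of the chosen shortcuts add up without overcounting; and (iii) a careful accounting, as in the passage from \eqref{eqn: tree} to \eqref{eqn: central}, of the edges of $\ell_k$ around which no shortcut is placed, whose expected contribution must be shown to be $n^{-c}$-small relative to $2^{2k}\pi_3(2^k)$ using \eqref{eqn: 3-arm-bdy}-type volume estimates together with the concentration bound \eqref{eq: lasagna_supremo}. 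The interplay between the per-step savings $C'\epsilon^{1/2}$ and the per-step scale cost $C''$ — ensuring the former wins before the latter exhausts the available scales — is the delicate quantitative point, and it is exactly what forces the hypothesis $2^k \geq (C\epsilon^{-4}(\log\frac1\epsilon)^2)^L$.
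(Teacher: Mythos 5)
Your overall architecture is the same as the paper's: use \eqref{eqn: getoor} (the conclusion of Section~\ref{sec: case_one}) as the base case and Proposition~\ref{prop: general-iteration} as the inductive step, tracking a non-increasing piecewise sequence $\delta_l(i)$ that satisfies \eqref{eqn: assumed-bds} for all $l\geq 1$ so that Proposition~\ref{prop: central} can be re-applied at each round. You also correctly isolate the three mechanisms driving Proposition~\ref{prop: central}: the decoupling bound of Proposition~\ref{prop: concentration}, the nesting/disjointness of shortcuts from Definition~\ref{def: shortcuts} item~4, and the accounting (as in \eqref{eqn: stelmo}--\eqref{eqn: sigma-main}) of edges of $\ell_k$ with no shortcut.

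There is, however, a concrete arithmetic error in the step where you reconcile the scale threshold with the per-iteration cost. You claim that $2^k\geq (C\epsilon^{-4}(\log\frac1\epsilon)^2)^{i+1}$ forces $2^{k-C''}\geq(C\epsilon^{-4}(\log\frac1\epsilon)^2)^{i}$ because ``each level multiplies the threshold by the large factor $C\epsilon^{-4}(\log\frac1\epsilon)^2\gg 2^{C''}$.'' With $C''\asymp\epsilon^{-4}(\log\frac1\epsilon)^2$, this inequality is backwards: $2^{C''}$ is super-exponentially larger than $C\epsilon^{-4}(\log\frac1\epsilon)^2$, so the implication fails. Equivalently, the hypothesis $2^k\geq(C\epsilon^{-4}(\log\frac1\epsilon)^2)^{i+1}$ gives $k\geq (i+1)\log(C\epsilon^{-4}(\log\frac1\epsilon)^2)=O((i+1)\log\frac1\epsilon)$, but you need $k-C''$ to stay above the level-$i$ threshold, which costs $C''\asymp\epsilon^{-4}(\log\frac1\epsilon)^2\gg\log\frac1\epsilon$ per level. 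The same slip already appears in your base case, where you write ``$2^k\geq C\epsilon^{-4}(\log\frac1\epsilon)^2\geq s_1$'': this conflates $2^k$ with $k$, since $s_1=3M\epsilon^{-4}(\log\frac1\epsilon)^2$ is a lower bound on $k$, not on $2^k$. What Proposition~\ref{prop: general-iteration} actually delivers, after $L$ rounds, is $\mathbf{E}[\#\mathfrak{s}_k\mid E_k']\leq(4C_2C_3\epsilon^{1/2})^L\,2^{2k}\pi_3(2^k)$ for $k\geq s_L=3ML\epsilon^{-4}(\log\frac1\epsilon)^2$, i.e., the threshold grows \emph{linearly} in $L$ with slope $\asymp\epsilon^{-4}(\log\frac1\epsilon)^2$, so the correct condition is $2^k\geq 2^{CL\epsilon^{-4}(\log\frac1\epsilon)^2}$. (This is apparently also at odds with the literal reading of the stated hypothesis $2^k\geq(C\epsilon^{-4}(\log\frac1\epsilon)^2)^L$; for the application in Section~\ref{sec: main_proof} one only needs some fixed $a<1$ with $\mathbf{E}[\#\mathfrak{s}_j\mid E_j']\leq a^j2^{2j}\pi_3(2^j)$ over a window of order $\log n$ scales, and the threshold $k\geq s_L$ still produces such an $a$.) Replace your heuristic $C''\lesssim\log(C\epsilon^{-4}(\log\frac1\epsilon)^2)$ with the direct requirement $k\geq s_{L+1}$ from Proposition~\ref{prop: general-iteration}, and the induction closes.
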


The proof of Proposition~\ref{prop: main} is split into four sections. In Section~\ref{sec: construction}, we construct a family of candidate paths $(\sigma(i))_{i \geq 1} = (\sigma(i,k))_{i \geq 1}$ between the five arm points in $U(k)$ using lower-scale optimal paths and give the central iterative bound on their lengths in Proposition~\ref{prop: central}. In the remaining sections, we estimate the right side of this inequality: in Section~\ref{sec: some_definitions}, we present basic inequalities and choices of parameters, in Section~\ref{sec: case_one}, we give the bound in the case $i=0$, and in Section~\ref{sec: case_two}, we give the general case, $i \geq 1$.

\subsection{Construction and estimation of shorter arcs}\label{sec: construction}
Proposition \ref{prop: main} follows from an iterative procedure wherein improvements on the outermost arc $\ell_k$ in $U(k)$ (which is actually in the smaller region $\tilde{U}(k)\cup \tilde{V}(k)$) are made on larger and larger scales. The best improvement so far on scale $l$ is described by a sequence of parameters $\kappa_l(i)$, $l,i=1,2,\ldots$, nonincreasing in $l$, where $i$ denotes the number of the current iteration in the argument. All definitions in this section will depend on the number of iterations so far, which we will call the generation $i$.  The following is a key definition. It should be compared to Definition \ref{def: shortcuts}, where the shortcuts were constructed around the lowest crossing $l_n$ of the box $B(n)$. Here the shortcuts are constructed around $\ell_k$ in the region $U(k)$ (see Section \ref{sec: ushaped}).
\begin{df} \label{def: const} We say $r$ is a \underline{size $l$} shortcut in generation $i$ if
\begin{enumerate}
\item $r$ is an $\kappa_l(i)$-shortcut in the sense of Definition \ref{def: shortcuts}. In particular, the ``gain factor'' $\#r/\#\tau$ is $\le \kappa_l(i)$, where $\tau$ is the detoured part.
\item The shortcut $r$ is contained in a box of side length $3\cdot 2^l$. %(We say a box has size $s$ if it has side length $2s$.)
\item The detoured part $\tau$ is contained in a box $B\subset U(k)$, with the same center as the box in the previous item, of side length $2^{ \log \frac{1}{\epsilon} } 2^l$, has $\ell^\infty$-diameter greater than $\frac{2}{3}2^{\log \frac{1}{\epsilon}} 2^l$, and
\begin{equation}\label{eqn: dist}
    \mathrm{dist}(\tau,\{\star_1,\star_2\})\ge \frac{1}{8}2^{\log \frac{1}{\epsilon}}2^l.
\end{equation}
\end{enumerate}
\end{df}
Eventually, the gain factor will have the form $\kappa_l(i) = \epsilon^{C\min\{ i, C'l\}}$. We note that if $\epsilon$ is sufficiently small, the largest possible size of shortcut is no larger than $k+1$. Furthermore, distinct shortcuts (regardless of their sizes) are either nested or disjoint. By nested, we mean that the region enclosed by the union of a shortcut and its detoured section of $\ell_k$ surrounds that of  another shortcut. Both of these statements follow from the presence of ``shielding'' paths in item 4 of Definition \ref{def: shortcuts}. (See \cite[Prop. 2.3]{DHSchemical1}.) Last, the definition of size $l$ shortcuts is designed so that if $e\in \ell_k$ and if $E_l(e,\epsilon,\kappa_l(i)/\epsilon)$ occurs for an $l$ such that (a) $B(e,2^{l+\lfloor \log \frac{1}{\epsilon}\rfloor})\subset U(k)$ (which holds for $l\le k-3-\log \frac{1}{\epsilon}$ by \eqref{eqn: distance-lb}) and (b) $B(e,2^{l+\lfloor \log \frac{1}{\epsilon}\rfloor})$ does not contain the five-arm points $\star_i$, then there is a size $l$ shortcut in generation $i$ around $e$. This follows from the analogue of Proposition \ref{prop: implies-S} for U-shaped regions (which gives item 1 above) and the construction of events $E_k$ in the previous sections (the red box in Figure \ref{fig: main-Ek} for item 2 and the larger box from that figure and the existence of three-arm points in the rectangle $R$ in \eqref{eqn: R-def} for item 3.)

\paragraph{Construction.} Given the occurrence of $E_k'$, we define an arc $\sigma=\sigma(i)$ joining the two five-arm points in $U_k$ as follows. For each $l=k+1, k, \ldots, 1$  in order, choose a maximal collection of (generation $i$) shortcuts of size $l$, in the following way. First, we select a collection of size $k+1$ shortcuts such that no two of their detoured paths share vertices and the total length of the detoured sections of $\ell_k$ is maximal. The remaining uncovered portion of $\ell_k$ splits into a union of disjoint segments. For each such segment, we select a collection of size $k$ shortcuts such that no two of their detoured paths share vertices and the total length of the detoured sections of the segment is maximal. Continuing this way down to size 1 shortcuts, we obtain our maximal collection of shortcuts. Next we form the arc $\sigma$ consisting of the union of these shortcuts, and all the segments of $\ell_k$ which are not covered by this collection. It can be argued similarly to \cite[Lemma 2.4]{DHSchemical1} that what results from  the preceding construction is an open arc between the two five-arm points. Since the shortcuts are either nested or disjoint, this construction has the following essential property:

\begin{claim}\label{claim: comb}
Given any edge $e$ of the outermost arc $\ell_k$ of $U(k)$, if, after applying the above construction, the new arc $\sigma$ does not include a shortcut around $e$ of any size $l=k+1,k,\ldots,r-1$, then there is no shortcut of any size $k+1,k, \ldots, r-1$ around $e$ at all. 
\end{claim}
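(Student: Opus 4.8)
The plan is to establish the contrapositive. Suppose that, in the generation $i$ under consideration, there is a shortcut $r_0$ of some size $l_0\in\{r-1,r,\dots,k+1\}$ whose detoured sub-arc $\tau_0\subset\ell_k$ contains $e$; we will show that the construction selects into $\sigma$ a shortcut around $e$ of some size in $\{r-1,\dots,k+1\}$. Throughout we use two structural facts. The first, recorded just after Definition~\ref{def: const}, is that any two shortcuts are \emph{nested or disjoint}: writing $\mathcal{R}(s)$ for the closed Jordan region bounded by a shortcut $s$ together with its detoured sub-arc $\tau_s\subset\ell_k$, ``nested'' means one of $\mathcal{R}(\cdot)$ contains the other, and ``disjoint'' means the detoured sub-arcs share no vertex. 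The second is that, for a single shortcut $s$, $\ell_k\cap\mathcal{R}(s)=\tau_s$: this holds because the interior of $\mathcal{R}(s)$ lies in $U(k)\setminus\mathcal{B}(\ell_k)$ and so avoids $\ell_k$, while $\partial\mathcal{R}(s)=s\cup\tau_s$ meets $\ell_k$ only along $\tau_s$ since the interior vertices of $s$ are off $\ell_k$ by Definition~\ref{def: shortcuts} (cf.\ \cite[Prop.~2.3]{DHSchemical1}). In particular, $\mathcal{R}(s)\subseteq\mathcal{R}(s')$ is equivalent to $\tau_s\subseteq\tau_{s'}$.

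The first step is to locate $e$ in the construction at the moment size-$l_0$ shortcuts are chosen. At that moment $\ell_k$ is partitioned into the detours of the already-chosen shortcuts (all of size $>l_0$) and a disjoint family of uncovered segments. If $e$ lies in one of those chosen detours, the corresponding shortcut is in $\sigma$ and has size $>l_0\ge r-1$, and we are done. Otherwise $e$ lies in an uncovered segment $S$, and I claim $\tau_0\subseteq S$. Since $\tau_0$ is a connected sub-arc of $\ell_k$ through $e\in S$, it suffices to show $\tau_0$ misses the detour $\tau'$ of every already-chosen shortcut $r'$ (of size $l'>l_0$). If not, $r_0$ and $r'$ are nested: the possibility $\mathcal{R}(r_0)\subseteq\mathcal{R}(r')$ gives $e\in\tau_0\subseteq\tau'$, contradicting that $e$ is uncovered; the possibility $\mathcal{R}(r')\subseteq\mathcal{R}(r_0)$ gives $\tau'\subseteq\tau_0$, which is impossible for metric reasons, since by Definition~\ref{def: const} the detour $\tau'$ has $\ell^\infty$-diameter exceeding $\tfrac23\cdot 2^{\log\frac1\epsilon}2^{l'}\ge\tfrac43\cdot 2^{\log\frac1\epsilon}2^{l_0}$, whereas $\tau_0$ is contained in a box of side $2^{\log\frac1\epsilon}2^{l_0}$ and hence has $\ell^\infty$-diameter at most $2^{\log\frac1\epsilon}2^{l_0}$. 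Hence $\tau_0\subseteq S$.

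The second step is an exchange argument against the maximality built into the construction. At that stage one chooses a collection $\mathcal{C}$ of size-$l_0$ shortcuts with pairwise vertex-disjoint detours contained in $S$ for which $\sum_{s\in\mathcal{C}}\#\tau_s$ is maximal; by the first step $r_0$ is an admissible candidate. Assume no detour of $\mathcal{C}$ contains $e$. Let $\mathcal{C}_0=\{s\in\mathcal{C}:\tau_s\subseteq\tau_0\}$ and $\mathcal{C}'=(\mathcal{C}\setminus\mathcal{C}_0)\cup\{r_0\}$. For $s\in\mathcal{C}\setminus\mathcal{C}_0$ with $\tau_s\cap\tau_0\neq\emptyset$, nesting of $r_0$ and $s$ forces either $\tau_s\subseteq\tau_0$ (so $s\in\mathcal{C}_0$, a contradiction) or $e\in\tau_s$ (contradicting that $\mathcal{C}$ misses $e$); hence $\tau_0$ is disjoint from every detour of $\mathcal{C}\setminus\mathcal{C}_0$, so $\mathcal{C}'$ is again a collection of the admissible type. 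On the other hand the detours of $\mathcal{C}_0$ are pairwise vertex-disjoint sub-arcs of the path $\tau_0$, none equal to $\tau_0$ (as $e\in\tau_0$ but $e\notin\tau_s$ for $s\in\mathcal{C}_0$), so $\sum_{s\in\mathcal{C}_0}\#\tau_s<\#\tau_0$; thus $\mathcal{C}'$ has strictly larger total detour length than $\mathcal{C}$, contradicting maximality. Therefore some detour of $\mathcal{C}$ contains $e$, i.e.\ $\sigma$ contains a size-$l_0$ shortcut around $e$ with $l_0\in\{r-1,\dots,k+1\}$, which completes the contrapositive.

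The only genuinely delicate part is the topology/scale interplay in the first step: one must be sure that ``nested'' translates faithfully into containment of detoured sub-arcs --- which rests on the shielding paths of Definition~\ref{def: shortcuts} and on each shortcut's interior staying strictly off $\ell_k$ --- and must use the box/diameter bookkeeping of Definition~\ref{def: const} to rule out the detour of a larger-scale shortcut sitting inside the smaller box associated with $r_0$. Once that is secured, the exchange argument of the second step and the reduction to the contrapositive are routine, in the spirit of \cite[Lemma~2.4]{DHSchemical1}.
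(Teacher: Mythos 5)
Your proof is correct and follows essentially the same route as the paper's: both arguments combine the nested-or-disjoint property of shortcuts with the scale bookkeeping of Definition~\ref{def: const} (a detour of size $>l_0$ has $\ell^\infty$-diameter exceeding $\tfrac43 \cdot 2^{\log\frac1\epsilon}2^{l_0}$, too large to sit inside the size-$l_0$ box) to conclude that the hypothetical shortcut's detour lies entirely in the uncovered segment at stage $l_0$, and then invoke maximality of the chosen collection. The only differences are presentational: you argue the contrapositive where the paper argues directly, and you flesh out the maximality step as an explicit exchange argument (replacing $\mathcal{C}_0$ by $\{r_0\}$ and checking admissibility and strict gain), a step the paper simply asserts with ``maximality dictates.'' Your spelled-out version is a useful elaboration but not a genuinely different method.
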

\begin{proof}
  Suppose $\sigma$ does not include a shortcut around $e$ of any size $l=k+1,k, \ldots, r-1$. Then for any such $l$, $e$ must be on a segment $\pi_l$ of $\ell_k$ that is uncovered after we place size $l$ shortcuts of $\ell_k$, and $\pi_l\subset \pi_{l+1}$ for all $l$, where we write $\pi_{k+2}=\ell_k$. If there is a  shortcut $r$ of size $l'$ (not contained in $\sigma$) around $e$ for some $l'=k+1,k,\ldots,r-1$, then note that $r$ must have both of its endpoints on $\pi_{l'+1}$. This is trivial if $\pi_{l'+1}=\ell_k$; otherwise, the segment $\pi_{l'+1}$ has endpoints which are starting vertices of shortcuts $r_1$, $r_2$ of sizes $\ge l'+1$. (If one endpoint of $\pi_{l'+1}$ is one of the five-arm points $\star_i$, we only get one such shortcut $r_1$.) Because shortcuts are nested, if $r$ has an endpoint on $\ell_k \setminus \pi_{l'+1}$, then the detoured path $\tau_i$ of some  $r_i$ would be contained in the detoured path $\tau$ of $r$. However, this is impossible by size considerations:
  \[ \frac{2}{3}2^{\log \frac{1}{\epsilon}}2^{ l'+1}\le \text{diam}~\tau_i \le\text{diam}~\tau \le 2^{\log \frac{1}{\epsilon}}2^{ l'}.\]
Therefore $r$ has both endpoints on $\pi_{l'+1}$. Because $\pi_{l'+1}$ is uncovered when we add size $l'$ shortcuts, and all such shortcuts are disjoint, maximality dictates that we must add $r$, or another shortcut of size $l'$ that covers $e$, to $\sigma$. This is a contradiction.
\end{proof}

From Claim \ref{claim: comb}, we see that if the new arc $\sigma$ contains a shortcut around $e$ of size $l$, then there is no shortcut of any size  $l+1, \ldots, k+1$ around $e$ at all. Indeed, $e$ must have been on an uncovered segment directly before we added shortcuts of size $l$, and is therefore not covered by a shortcut in $\sigma$ of any size $l+1, \ldots, k+1$.

The following proposition is the main iterative bound of the paper.
\begin{prop}\label{prop: central}
Let $\epsilon>0$ and fix $i\in \mathbb{N}$. Suppose moreover that, for some nonincreasing sequence of parameters $\delta_l(i)$, $l\ge 1$, we have
\begin{equation}\label{eqn: assumed-bds}
  \mathbf{E}[\#\mathfrak{s}_l \mid E_l'] \le \delta_l(i) 2^{2l}\pi_3(2^l).
\end{equation}
Let
\[\kappa_l(i):= \begin{cases}
\epsilon \cdot \delta_l(i) & \quad \text{if } l \geq 1 \\
1 & \quad \text{if } l \leq 0.
\end{cases}
\]
and $\sigma=\sigma(i)$ be defined as above, in terms of the sequence $\kappa_l(i)$, in the region $U_k$ for some $k\ge 1$. 
For $d=0, \ldots, k+1$, let $M>0$ and $d_1=d_1(d)$ be given as
\begin{equation}\label{eqn: d1-def}
d_1= d-M\epsilon^{-4}\left(\log\frac{1}{\epsilon}\right)^2.
\end{equation}
There are positive constants $c_*$ and $C_2$ with $C_2\ge 1$ such that for any $\epsilon$ sufficiently small, any $M>0$ and $i\in \mathbb{N}$, any parameters $\delta_l(i)$ as above, and any $k\ge 1$,
\begin{align}
\mathbf{E}[\#\mathfrak{s}_k \mid E_k'] & \le \mathbf{E}[\# \sigma(i) \mid E_k'] \nonumber \\
&\le C_2\sum_{d=0}^{k+1} 2^{2d}\pi_3(2^d)\cdot\big(2^{-\eta(\epsilon)d}+\sum_{s=1}^{d_1}2^{-\eta(\epsilon)(d-s)}\kappa_s(i)+\kappa_{d_1}(i)\big), \label{eqn: central}
\end{align}
where
\begin{equation}\label{eqn: eta-def}
\eta(\epsilon):= \frac{c_* \epsilon^4}{\log \frac{1}{\epsilon}}.
\end{equation}
\end{prop}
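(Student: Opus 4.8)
The plan is to decompose the length of the candidate arc $\sigma(i)$ according to which shortcut (if any) covers each edge of $\ell_k$, and then apply the probabilistic estimates from the preceding sections scale by scale. Write $\#\sigma(i) = \#\{\text{edges of }\sigma(i)\text{ on uncovered segments of }\ell_k\} + \sum_{l=1}^{k+1} \#\{\text{edges of }\sigma(i)\text{ in size }l\text{ shortcuts}\}$. The first inequality in \eqref{eqn: central} is immediate since $\mathfrak{s}_k$ is the minimal-length open arc joining the five-arm points and $\sigma(i)$ is one such arc. For the second, I would first bound $\mathbf{E}[\#\ell_k \mid E_k']$ itself by $C 2^{2k}\pi_3(2^k)$ as in \eqref{eqn: initial}, and then, for a fixed edge $e$ of $\ell_k$, estimate the conditional probability that $e$ is covered by a shortcut of size exactly $l$ versus not covered at all by any shortcut of size $\geq r$. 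Using Claim~\ref{claim: comb}: if $e$ ends up on an uncovered segment after placing size-$d_1$ shortcuts, then by the analogue of Proposition~\ref{prop: implies-S} for $U$-shaped regions, no event $E_l(e,\epsilon,\kappa_l(i)/\epsilon)$ occurred for the relevant range of $l$ between $d_1$ and $d$ (with $d = \log\dist(e,\partial U(k))$ up to constants), so Proposition~\ref{prop: concentration} gives that this probability is at most $2^{-\eta(\epsilon)(d-d_1)}$ — this produces the $2^{-\eta(\epsilon)d}$ and $\kappa_{d_1}(i)$ terms. If instead $e$ is covered by a shortcut of size $s$, the gain factor is $\leq \kappa_s(i)$, and the probability that the \emph{largest} such shortcut has size $s$ (equivalently, no larger shortcut covers $e$) is again controlled by Proposition~\ref{prop: concentration}, giving the $2^{-\eta(\epsilon)(d-s)}\kappa_s(i)$ term.

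Concretely, I would sum over edges $e \in \ell_k$ grouped by their distance scale $d$ from $\partial U(k)$: the number of edges of $\ell_k$ at distance scale $2^d$ is, by the three-arm estimate \eqref{eqn: 3-arm-bdy} adapted to $U(k)$ (using that membership in $\ell_k$ is locally a three-arm event), of order $2^{2d}\pi_3(2^d)$ in expectation on $E_k'$. For such an edge, the contribution to $\mathbf{E}[\#\sigma(i)\mid E_k']$ is: (i) the gain factor $\kappa_s(i)$ times the length of the detoured part it replaces, summed against the probability $\sim 2^{-\eta(\epsilon)(d-s)}$ that size-$s$ is the largest shortcut around $e$, for $s = 1,\dots,d_1$; plus (ii) $\kappa_{d_1}(i)$ or $1$ times the probability that $e$ is on a small uncovered segment (handled by the concentration bound with exponent $\eta(\epsilon)(d-d_1)$), plus the raw $2^{-\eta(\epsilon)d}$ contribution from edges too close to the boundary for the iteration to apply. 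Here the bound \eqref{eqn: d1-def} on $d_1$ is exactly calibrated so that $\eta(\epsilon)(d-d_1) = \eta(\epsilon) M\epsilon^{-4}(\log\tfrac1\epsilon)^2 = c_* M$, a constant depending only on $M$ (and $c_*$), absorbed into $C_2$. Assembling these over all scales $d = 0,\dots,k+1$ and pulling out the factor $2^{2d}\pi_3(2^d)$ yields precisely the right-hand side of \eqref{eqn: central}. A technical point to be careful about: when applying Proposition~\ref{prop: concentration} one needs to condition on $\{e \in \ell_k\}$ rather than on a bare arm event, which is where Proposition~\ref{prop: changecond} enters — it lets us trade the conditioning $\{E_k', e\in\ell_k, \star_i = x_i\}$ for the conditioning $A_3(e,d)$ at a bounded multiplicative cost, so the concentration bound transfers.

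The main obstacle is the bookkeeping of the nested/disjoint shortcut structure: one must verify that the "largest shortcut around $e$ has size $s$" events across different edges do not overcount length, which is exactly the role of Claim~\ref{claim: comb} and the nesting property — each edge of $\ell_k$ lies under at most one maximal shortcut in $\sigma(i)$, so the per-edge contributions genuinely add up to $\#\sigma(i)$. A secondary subtlety is that the concentration estimate in Proposition~\ref{prop: concentration} requires the hypothesis \eqref{eqn: assumed-bds} to hold with the given $\delta_l(i)$ in order to produce events $E_l(e,\epsilon,\kappa_l(i)/\epsilon) = E_l(e,\epsilon,\delta_l(i))$ with the $c_4\epsilon^4$ lower bound and the $\eta(\epsilon)$-decoupling; thus the hypothesis \eqref{eqn: assumed-bds} is used not merely to seed the induction but to guarantee that the shortcuts of generation $i$ actually exist with the claimed probability. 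Finally, one checks that the various constraints on $\epsilon$ (from Proposition~\ref{prop: implies-S}, i.e. \eqref{eqn: epsilon-const-1}, and from the range of validity of Proposition~\ref{prop: concentration}) are compatible, giving the "$\epsilon$ sufficiently small" in the statement.
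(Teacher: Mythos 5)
Your overall plan matches the paper's proof of Proposition~\ref{prop: central}: decompose $\sigma(i)$ by segments of $\ell_k$ and their gain factors, invoke Claim~\ref{claim: comb} to say that if the largest shortcut placed at $e$ has size $s$, then no shortcut of any size $s+1,\ldots,k+1$ exists at $e$ (so $e\in B_s$), estimate $\mathbf{P}(e\in B_s)$ via Proposition~\ref{prop: concentration} after switching conditioning with Proposition~\ref{prop: changecond}, and assemble the result over dyadic scales. You also correctly flag the role of the hypothesis \eqref{eqn: assumed-bds} in making the $E_l(e,\epsilon,\delta_l(i))$ events satisfy the $c_4\epsilon^4$ lower bound and the need to partition over the locations of the five-arm points before applying Proposition~\ref{prop: changecond}.

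There is, however, a genuine slip in the choice of the scale parameter $d$. You write $d = \log\dist(e,\partial U(k))$ (up to constants) and group edges of $\ell_k$ by their distance to $\partial U(k)$. That cannot be the right parametrization: by \eqref{eqn: distance-lb}, every edge of $\ell_k$ satisfies $\dist(e,\partial U(k)) \geq \tfrac{1}{6}\cdot 2^k$, so $\log\dist(e,\partial U(k))$ is always within $O(1)$ of $k$ and produces no meaningful sum over $d=0,\ldots,k+1$. The paper instead sets $A_d = A_d(x_1,x_2)$ to be the set of edges at $\ell^\infty$-distance $\sim 2^d$ from one of the five-arm points $x_1$, $x_2$. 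This is the relevant scale for two reasons: (i) the local three-arm probability governing $\mathbf{P}(e\in\ell_k\mid E_k',F(x_1,x_2))\leq C\pi_3(2^d)$ is truncated at the distance to the $\star_i$'s, and (ii) the upper limit on the sizes of shortcuts available at $e$ comes from the requirement that $B(e,2^{l+\lfloor\log\frac{1}{\epsilon}\rfloor})$ avoid the five-arm points, not from the boundary of $U(k)$. Your estimate ``the number of edges of $\ell_k$ at distance scale $2^d$ is of order $2^{2d}\pi_3(2^d)$'' and the appearance of the factor $2^{-\eta(\epsilon)d}$ both implicitly require the $\star_i$-centered parametrization; neither works if $d$ is measured to $\partial U(k)$. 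A related secondary point: the preliminary remark about first bounding $\mathbf{E}[\#\ell_k\mid E_k']\leq C2^{2k}\pi_3(2^k)$ is not actually used; the proof proceeds via the edge-wise bound on $\mathbf{P}(e\in\ell_k\mid\cdot)$ summed over $A_d$ rather than by controlling $\#\ell_k$ globally. With the scale corrected to distance from the five-arm points, the rest of your outline is sound and follows the paper's argument.
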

\begin{proof}
The first inequality follows because $\ell_k$ is in $\tilde U(k)\cup \tilde{V}(k) \subset U(k)$ and all its shortcuts are constructed in boxes in $U(k)$, so $\sigma$ remains in $U(k)$. To estimate the length of $\sigma$, we begin by dividing the outermost arc $\ell_k$, given $\sigma$, into a finite number of segments $\hat{\sigma}_\ell$, $\ell=1,\ldots$, where each segment $\hat{\sigma}_\ell$ is either
\begin{enumerate}
\item a single edge of the outermost arc also belonging to $\sigma$, or
\item a segment of the outermost arc which is detoured by a connected sub-segment of $\sigma$. That is, $\hat{\sigma}_\ell$ is the part of the outermost arc detoured by a shortcut $\sigma_\ell$ in $\sigma$.
\end{enumerate}
To each shortcut $\sigma_\ell$, we can associate a ``gain factor'' $\mathrm{gf}(\sigma_\ell)$, which is 1 if $\sigma_\ell$ is an edge of the outermost arc, and $\#\sigma_\ell /\# \hat{\sigma}_\ell$ otherwise.

By definition of $\sigma$, we have
\[\# \sigma = \sum_\ell \#\hat \sigma_\ell \times \mathrm{gf}(\sigma_\ell)\]
For a fixed generation $i$ (initially $i=1$), we organize this sum according to the size of the shortcut $\sigma_\ell$ (we say the size is 0 if there is no shortcut, in which case the gain factor is 1):
\[\# \sigma = \sum_{s=0}^{k+1}\sum_{\ell: \mathrm{size}(\sigma_\ell)=s} \#\hat \sigma_\ell \times \mathrm{gf}(\sigma_\ell).\]
Note that for large values of $s$, many of the summands will be zero because there cannot exist shortcuts of such sizes. Nevertheless, the bound holds as stated.

The event $E_k'$ is partitioned into the events:
\[F(x_1,x_2):=\{\star_1=x_1,\star_2=x_2\},  \quad x_1\in B_1, x_2\in B_2.\]
Note that $F(x,y)\cap F(x',y')=\emptyset$ unless $x=x'$ and $y=y'$. Thus, we have 
\[\#\sigma \le \sum_{x_1\in B_1,x_2\in B_2} \mathbf{1}_{F(x_1,x_2)}\sum_{s=0}^{k+1}\sum_{\ell: \mathrm{size}(\sigma_\ell)=s} \#\hat \sigma_\ell \cdot \kappa_s(i).\]
Next we divide the region $U(k)$ according to the distance $d$ to the points $x_1$, $x_2$, obtaining, for 
\[A_{d}=A_d(x_1,x_2) =\{e\in U(k): 2^{d}\le \mathrm{dist}(e,x_1)\le 2^{d+1} \text{ or } 2^{d}\le \mathrm{dist}(e,x_2)\le 2^{d+1} \},\]
(and $A_0=\{\mathrm{dist}(e,x_1)\le 1 \text{ or }  \mathrm{dist}(e,x_2) \le 1\}$) the decomposition
\[\#\sigma \le \sum_{x_1\in B_1,x_2\in B_2} \mathbf{1}_{F(x_1,x_2)}\sum_{d=0}^{k+1}\sum_{s=0}^{d_0} \sum_{\ell: \mathrm{size}(\sigma_\ell)=s} \#(\hat{\sigma}_\ell\cap A_{d} )\cdot \kappa_s(i).\]
Here $d_0= d_0(d)=\max(d+4- \log\frac{1}{\epsilon},0)$. We do not need to consider larger sizes since they cannot occur at such distances by the condition \eqref{eqn: dist}.

 By the remark following Claim \ref{claim: comb}, if a shortcut $\sigma_\ell$ surrounds an edge $e$ and has size $s<k+1$, then there is no shortcut of any size $l=k+1, \ldots, s+1$ around $e$ at all, so
\begin{equation}\label{eqn: stelmo}
\#\sigma \le \sum_{x_1\in B_1, x_2\in B_2} \mathbf{1}_{F(x_1,x_2)}\sum_{d=0}^{k+1}\left(\sum_{s=0}^{d_1} \#(B_s\cap A_d ) \cdot \kappa_s(i) + \#(\ell_k \cap  A_d) \cdot \kappa_{d_1}(i) \right),
\end{equation}
where $B_s=B_s(\kappa_s(i))$ is the set edges on $\ell_k$ with no generation $i$ shortcuts of sizes $l=k+1,k,\ldots, s+1$. We have used monotonicity of $\delta_{\ell}(i)$ in $\ell$. (Recall that $\kappa_{d_1}=1$ for $d \le M\epsilon^{-4}\left(\log \frac{1}{\epsilon}\right)^2$). 

From Propositions \ref{prop: concentration} (for which we use the assumed bounds \eqref{eqn: assumed-bds}) and \ref{prop: changecond}, and the fact that events $E_l(e)$ for $l$ such that the box $B(e,2^{l+\lfloor \log \frac{1}{\epsilon}\rfloor})\subset U(k)$ does not contain the five-arm points $\star_i$ guarantee the existence of size $l$ shortcuts (see the discussion below Definition \ref{def: const}), we have
\begin{equation}\label{eqn: napoleon}
\begin{split}
  \mathbf{P}( e\in B_s \mid E_k',  e\in \ell_k, F(x_1,x_2) )&\le  \mathbf{P}(\cap_{l=s+1}^{d-\log\frac{1}{\epsilon}-10}E_l(e,\epsilon, \delta_l(i))^c\mid E_k', e\in \ell_k, F(x_1,x_2))\\
  & \le C \mathbf{P}(\cap_{l=s+1}^{d-\log\frac{1}{\epsilon}-10}E_l(e,\epsilon, \delta_l(i))^c\mid A_3(e,2^d))\\
&\le C 2^{-\frac{c_* \epsilon^4}{\log \frac{1}{\epsilon}}(d-s)}.
\end{split}
\end{equation}
whenever $e\in A_d$. 
From \eqref{eqn: napoleon} and \eqref{eqn: stelmo}, we have the following estimate for the size of $\sigma$:
\begin{equation}\label{eqn: sigma-main}
\begin{split}
&\mathbf{E}[\# \sigma(i) \mid E_k']\\
\le &\sum_{d=0}^{k+1} \sum_{x_1\in B_1, x_2\in B_2} \mathbf{P}(F(x_1,x_2)\mid E_k') \\
&\quad \times\big[ \sum_{s=0}^{d_1}\kappa_s(i) \sum_{e\in A_d}\mathbf{P}(e\in B_s\mid E_k', F(x_1,x_2),e\in \ell_k)\mathbf{P}(e\in \ell_k\mid E_k',F(x_1,x_2))\\
&\qquad +\kappa_{d_1}(i)\sum_{e\in A_d} \mathbf{P}(e\in \ell_k \mid E_k', F(x_1,x_2))\big]\\
\le & C_2 \sum_{d=0}^{k+1}2^{2d}\pi_3(2^d)(2^{-\eta(\epsilon)d}+\sum_{s=1}^{d_1}2^{-\eta(\epsilon)(d-s)}\kappa_s(i)+\kappa_{d_1}(i)).
\end{split}
\end{equation}
In passing to the final line of \eqref{eqn: sigma-main}, we have used the estimate
\[\mathbf{P}(e\in \ell_k \mid E_k', F(x_1,x_2))\le C\pi_3(2^d),\]
for $e\in A_d$, where $C$ is some constant independent of the parameters (in particular, of the $x_i$'s). This is the analogue (for $\ell_k$ instead of the lowest crossing $l_n$) of the upper bound in estimate \eqref{eqn: 3-arm-bdy}. That the conditioning on $E_k'$ and $F(x_1,x_2)$ results only in an additional constant factor is shown by a gluing construction very similar to the one illustrated in Figure \ref{fig: fivearms}.
\end{proof}

\subsection{Some definitions}\label{sec: some_definitions}
In estimating the volume of the new path $\sigma(i)$, $i\ge 1$, using \eqref{eqn: sigma-main}, it is important to track the dependence on $\epsilon$ when performing the requisite summations. We begin by introducing some notations and simple bounds we will use repeatedly in Sections~7.3 and 7.4.

We first take $\epsilon>0$ sufficiently small that Proposition \ref{prop: central} holds. We will need $\epsilon$ to be possibly even smaller, and will state this at various points in what follows. A key point is that the size of $\epsilon$ always depends on fixed parameters, and never on $k$ or the generation $i$.

We define:
\[m= M\epsilon^{-4}(\log \frac{1}{\epsilon})^2,\]
with $M$ as in \eqref{eqn: d1-def}. To simplify notation, we will assume $\epsilon$, $M$ are taken so that $m$ is an integer. With this notation we have $d_1=d-m$.
Note also that
\[\eta(\epsilon) m = c_* M \log \frac{1}{\epsilon}.\]
For $l\ge1$, set
\begin{equation}\label{eqn: sl-def}
s_l=3ml=3Ml\cdot \epsilon^{-4}\left(\log\frac{1}{\epsilon}\right)^2, \quad l\ge 1,
\end{equation}
with $s_0=0$.

We define
\begin{align*}
\theta(\epsilon)&=\frac{2^{\eta(\epsilon)}}{2^{\eta(\epsilon)}-1}\le \frac{2}{2^{\eta(\epsilon)}-1},
\end{align*}
where the inequality holds if $\epsilon$ is sufficiently small. We choose $M$ such that
\begin{equation}
M>  \max(1,7/c_*). \label{eqn: M-cond}
\end{equation}
Since 
\begin{equation}\label{eqn: frog}
2^{\frac{c_*}{\log \frac{1}{\epsilon}}\epsilon^4 }-1\ge c_*\frac{ \ln 2}{\log \frac{1}{\epsilon}}\epsilon^4,
\end{equation}
we have
\begin{equation}\label{eqn: toledo}
\frac{\epsilon^{M c_*}}{2^{\frac{c_*\epsilon^4}{\log \frac{1}{\epsilon}}}-1} \le \frac{\log\frac{1}{\epsilon}}{c_* \ln 2} \epsilon^3.
\end{equation}
We will always choose $\epsilon=\epsilon(c_*)$ so small that the quantity in \eqref{eqn: toledo} is less than $2\epsilon^2$:
\begin{equation}\label{eqn: chicago}
\theta(\epsilon) \epsilon^{c_*M}\le \epsilon^2.
\end{equation}

The constant $C_3\ge 1$ is chosen such that for all $L\ge 1$ and any $\alpha\le 1$,
\begin{equation}\label{eqn: C3-def}
\begin{split}
\sum_{d=0}^{L+1} 2^{2d}\pi_3(2^d) 2^{-\alpha d}&= \pi_3(2^L) \sum_{d=0}^{L+1} \frac{\pi_3(2^d)}{\pi_3(2^L)}2^{(2-\alpha)d}\\
%&\le C\pi_3(2^L) \sum_{d=1}^{L+1} 2^{(2-\alpha)d}2^{\beta(L-d)}\\
&\le C_5\pi_3(2^L) 2^{\beta L} \frac{2^{(L+2)(2-\beta-\alpha)}-1}{2^{2-\beta-\alpha}-1}\\
&\le C_3 2^{(2-\alpha)L}\pi_3(2^L)
\end{split}
\end{equation}
Here $0<\beta<1$ was introduced in \eqref{eqn: leslie}.

%Finally, we will choose $\epsilon>0$ such that
%\begin{equation}\label{eqn: aizenman}
%\epsilon < \min\{\frac{1}{4\cdot 10^4}, \left(\frac{c_0^2}{2C_5 2^{2-\beta}}\right)^{1/\gamma}, \frac{1}{C_1^2}, 2^{-1/\gamma}\},
%\end{equation}
%(see \eqref{eqn: epsilon-const-1}).

\subsection{Improvement by iteration}\label{sec: case_one}
%Assuming \eqref{eqn: mountain}, we find by \eqref{eqn: sigma-main}.
%\begin{align}
%\mathbf{E}[\#\sigma(2) \mid E_k] &\le C_2\sum_{d=0}^{k+1} 2^{2d}\pi_3(2^d)\cdot\big(2^{-\eta(\epsilon)d}+ 2^{-\eta(\epsilon)d} \sum_{s=1}^{2m} 2^{\eta(\epsilon)s}+2\epsilon^2 \sum_{s=2m}^{d_1}2^{-\eta(\epsilon)(d-s)}\big) \label{eqn: whatnow}\\
%&+ C_2 \sum_{d=1}^{2m}2^{2d}\pi_3(2^d) +  C_2\sum_{d> 2m}^{k+1} 2^{2d}\pi_3(2^d)\kappa_{d_1}(2). \label{eqn: whynot}
%\end{align}
%Sums up to $d_1$ are taken to be 0 when $d\le m$. 

%The first term in \eqref{eqn: whynot} is
%\[C_2\sum_{d=0}^{2m} 2^{2d}\pi_3(2^d)\le C_2C_3 2^{2\cdot 2m}\pi_3(2^{2m}).\]
%Using \eqref{eqn: leslie}, this is smaller than $C_2C_3\epsilon^{3/2} 2^{2k}\pi_2(2^k)$ if $k \ge 3m$.

%The second sum in \eqref{eqn: whynot} is
%\begin{equation}\label{eqn: knight}
%2\epsilon^2\cdot \sum_{d= 2M\epsilon^{-4}(\log \frac{1}{\epsilon})^2+1}^{k+1}2^{2d}\pi_3(2^d),
%\end{equation}
%which is bounded by
%\begin{equation}\label{eqn: patriot}
%2\epsilon^2 C_32^{2k}\pi_3(2^k)
%\end{equation}
%by definition of $C_3$.
%Using \eqref{eqn: leslie}, the first and second terms in \eqref{eqn: patriot} are bounded by the second if
%\[2\cdot 2^{2(m-k)}\frac{\pi_3(2^m)}{\pi_3(2^k)} \le 2\epsilon.\]
%By \eqref{eqn: leslie}, we have
%\[2^{2(m-k)}\frac{\pi_3(2^m)}{\pi_3(2^k)}\le C_52^{(2-\beta)(m-k)},\]
%so if
%\begin{equation}\label{eqn: dropit}
%k\ge 2M \epsilon^{-4}(\log \frac{1}{\epsilon})^2\ge M\epsilon^{-4}(\log \frac{1}{\epsilon})^2+\frac{1}{2-\beta}\log C_5+\frac{1}{2-\beta}\log \frac{1}{\epsilon}.
% \end{equation}
We start from the initial estimate
\begin{equation}\label{eqn: initial}
\mathbf{E}[\#\mathfrak{s}_k \mid E_k']\le C_1 2^{2k}\pi_3(2^k),
\end{equation}
for some $C_1\ge 1$.
We apply Proposition \ref{prop: central} with $\delta_s(0)= C_1$ (equivalently, $\kappa_s(0)= C_1 \epsilon$) for all $s\ge 1$.
Defining the corresponding arc $\sigma(0)$, we obtain for $k\ge 1$,
\begin{equation}\label{eqn: sigma-1}
\mathbf{E}[\# \sigma(0)\mid E_k'] \le C_2 \sum_{d=0}^{k+1}2^{2d}\pi_3(2^d)(2^{-\eta(\epsilon)d}+C_1 \epsilon \sum_{s=1}^{d_1}2^{-\eta(\epsilon)(d-s)}+\kappa_{d_1}(0)).
\end{equation}
We use this last expression to obtain an improve on \eqref{eqn: initial} under the assumption
\begin{equation}\label{eqn: brief}
k>s_1=3m.
\end{equation}
The quantity \eqref{eqn: sigma-1} is bounded by
\begin{align}
 &C_2\sum_{d=0}^{k+1} 2^{2d}\pi_3(2^d)2^{-\eta(\epsilon)d} + C_2 C_1 \epsilon \sum_{d=0}^{k+1} 2^{2d}\pi_3(2^d)2^{\eta(\epsilon)} \frac{\epsilon^{Mc_*}}{2^{\eta(\epsilon)}-1} \label{eqn: incredible}\\
+& C_2 \sum_{d=0}^{k+1} 2^{2d}\pi_3(2^d)  \kappa_{d_1}(0) . \label{eqn: intrepid}
\end{align}

By definition of $C_3$ (see \eqref{eqn: C3-def}), the first term in \eqref{eqn: incredible} is bounded as follows
\begin{equation}
C_2\sum_{d=0}^{k+1} 2^{2d}\pi_3(2^d)2^{-\eta(\epsilon)d} \le C_2 C_3 2^{2k}2^{-\eta(\epsilon)k}\pi_3(2^k).
\end{equation}
Using \eqref{eqn: chicago} and \eqref{eqn: C3-def}, the second term in \eqref{eqn: incredible} is bounded by
\begin{equation}
2C_1C_2C_3 \epsilon^{3} 2^{2k}\pi_3(2^k).
\end{equation}
Similarly, for \eqref{eqn: intrepid} we have the upper bound
\begin{equation}
\begin{split}
    C_2\sum_{d=0}^m 2^{2d}\pi_3(2^d)+C_2\sum_{d=m+1}^{k+1}C_1\epsilon 2^{2d}\pi_3(2^k) &\le  C_2C_32^{2k}\pi_3(2^k)\big[ 2^{-(k-m)(2-\beta)}+C_1\epsilon\big]\\
    &\le 2C_1C_2C_3\epsilon 2^{2k}\pi_3(2^k).
  \end{split}
\end{equation}
In the second inequality we have assumed that $k\ge s_1=3m$ and taken $\epsilon$ sufficiently small (depending only on $\beta$).

Adding these three bounds, $\mathbf{E}[\# \sigma(0)\mid E_k']$ is bounded by
\begin{equation}\label{eqn: karldergrosse}
C_2C_32^{2k}\pi_3(2^k)(2^{-\eta(\epsilon) k}+2C_1\epsilon^3+2C_1\epsilon).
\end{equation}
\eqref{eqn: karldergrosse} is bounded by
\[2C_2C_3 \epsilon^{1/2}2^{2k}\pi_3(2^k).\]
if $\epsilon$ is small (depending on $C_1$) and \eqref{eqn: brief} holds,
since then $2^{-\eta(\epsilon)k}\le \epsilon^{1/2}$.
Thus,
\[\mathbf{E}[\#\sigma(0) \mid E_k'] \le 4C_2C_3\epsilon^{1/2} \cdot  2^{2k}\pi_3(2^k)\]
for $k$ satisfying \eqref{eqn: brief}.

This completes our bounding the right side of the main inequality in Proposition \ref{prop: central}. In summary, we now have $\mathbf{E}[\# \mathfrak{s}_r \mid E_r'] \le \delta_r(1)2^{2r}\pi_3(2^r)$, with
\begin{equation}
\label{eqn: getoor}
\delta_r(1)=\begin{cases}
  C_1 ,&\quad r \le s_1,\\
  4C_2C_3 \epsilon^{1/2},&\quad r> s_1.
  \end{cases}
\end{equation}
We may now iterate Proposition \ref{prop: central} for further generations to obtain an improved bound. We formulate a general inductive result in the next section.

\subsection{General case}\label{sec: case_two}
\begin{prop} \label{prop: general-iteration}
  Assume that
\begin{equation}\label{eqn: glueck}
\mathbf{E}[\# \mathfrak{s}_r \mid E_r'] \le \delta_r(L)2^{2r}\pi_3(2^r),
\end{equation}
  holds for the choice of parameters
\begin{equation}
\delta_r(L)=\begin{cases}
C_1 & \quad \text{ if }  r \le s_1 \\
 (4C_2C_3)^l\epsilon^{l/2} & \quad \text{ if }   s_l < r \le  s_{l+1}, \  l=1,\ldots,L-1,\\
 (4C_2C_3)^L\epsilon^{L/2}& \quad \text{ if } r> s_L.
\end{cases}
\end{equation}
Then, \eqref{eqn: glueck} also holds for $r\ge s_{L+1}$ and $\delta_r(L)$ replaced by
\begin{equation}\label{eqn: hans}
\delta_r(L+1) =(4C_2C_3)^{L+1}\epsilon^{(L+1)/2}.
\end{equation}
\end{prop}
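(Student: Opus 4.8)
The plan is to feed the hypothesized bounds \eqref{eqn: glueck} for generation $L$ into Proposition~\ref{prop: central} and show that the resulting estimate for $\mathbf{E}[\#\sigma(L)\mid E_k']$ produces the improved constant $\delta_r(L+1)=(4C_2C_3)^{L+1}\epsilon^{(L+1)/2}$ on all scales $r>s_{L+1}$. Concretely, apply Proposition~\ref{prop: central} with the sequence $\kappa_l(L)=\epsilon\delta_l(L)$ (which is nonincreasing in $l$, since the $\delta_l(L)$ are) and with $M$ fixed as in \eqref{eqn: M-cond}, so that $d_1=d-m$ and $s_l=3ml$. This yields
\[
\mathbf{E}[\#\mathfrak{s}_k\mid E_k']\le C_2\sum_{d=0}^{k+1}2^{2d}\pi_3(2^d)\Big(2^{-\eta(\epsilon)d}+\sum_{s=1}^{d_1}2^{-\eta(\epsilon)(d-s)}\kappa_s(L)+\kappa_{d_1}(L)\Big).
\]
I will bound the three contributions separately exactly as in Section~\ref{sec: case_one}, but now tracking the scale-dependence of $\kappa_s(L)$.

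First, the $2^{-\eta(\epsilon)d}$ term is bounded by $C_2C_3\,2^{2k}2^{-\eta(\epsilon)k}\pi_3(2^k)$ via \eqref{eqn: C3-def}, and for $k\ge s_{L+1}=3m(L+1)$ this is $\le C_2C_3\,\epsilon^{(L+1)/2}2^{2k}\pi_3(2^k)$ once $\epsilon$ is small enough that $2^{-\eta(\epsilon)\cdot 3m}\le\epsilon^{3/2}$ (i.e. $2^{-3c_*M\log\frac1\epsilon}\le\epsilon^{3/2}$, which holds by \eqref{eqn: M-cond}), since then $2^{-\eta(\epsilon)k}\le\epsilon^{3(L+1)/2}\le\epsilon^{(L+1)/2}$. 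Second, for the middle term I split the inner sum $\sum_{s=1}^{d_1}$ at the thresholds $s_l$: on the block $s_l<s\le s_{l+1}$ we have $\kappa_s(L)\le (4C_2C_3)^l\epsilon^{l/2+1}$, and summing the geometric factor $2^{-\eta(\epsilon)(d-s)}$ over $s$ up to $d_1=d-m$ gains a factor $\theta(\epsilon)2^{-\eta(\epsilon)m}=\theta(\epsilon)\epsilon^{c_*M}\le\epsilon^2$ by \eqref{eqn: chicago}; the dominant block is the largest admissible $l$, namely the one with $\kappa_{d_1}(L)$, giving a bound of order $C_2C_3\cdot\epsilon^2\cdot(\text{current }\kappa_{d_1}(L))\cdot 2^{2d}\pi_3(2^d)$ after applying \eqref{eqn: C3-def} with $\alpha=0$. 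Third, the $\kappa_{d_1}(L)$ term: for $d\le s_{L+1}=3m(L+1)$ one has $d_1=d-m\le s_L+2m$... more carefully, for $k\ge s_{L+1}$ and $d$ ranging over $0,\dots,k+1$, when $d-m>s_L$ we have $\kappa_{d_1}(L)=\epsilon(4C_2C_3)^L\epsilon^{L/2}$, and when $d-m\le s_L$ the larger values of $\kappa_{d_1}(L)$ occur only for $d\le s_L+m<s_{L+1}$, a bounded (in terms of the geometric sum \eqref{eqn: C3-def}) contribution that is absorbed because $2^{-(k-d)(2-\beta)}$ decays; combining via \eqref{eqn: C3-def} gives $\le 2C_2C_3(4C_2C_3)^L\epsilon^{L/2+1}2^{2k}\pi_3(2^k)$ for $\epsilon$ small depending only on $\beta$.

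Adding the three bounds gives $\mathbf{E}[\#\mathfrak{s}_k\mid E_k']\le\mathbf{E}[\#\sigma(L)\mid E_k']\le C_2C_3\big(\epsilon^{(L+1)/2}+\epsilon^2(4C_2C_3)^L\epsilon^{L/2+1}+2(4C_2C_3)^L\epsilon^{L/2+1}\big)2^{2k}\pi_3(2^k)$, and since $\epsilon^{(L+1)/2}=\epsilon^{1/2}\cdot\epsilon^{L/2}\le\epsilon^{1/2}(4C_2C_3)^L\epsilon^{L/2}$ and the other two terms carry an extra power of $\epsilon$, the whole thing is at most $4C_2C_3\cdot(4C_2C_3)^L\epsilon^{1/2}\cdot\epsilon^{L/2}2^{2k}\pi_3(2^k)=(4C_2C_3)^{L+1}\epsilon^{(L+1)/2}2^{2k}\pi_3(2^k)$, which is \eqref{eqn: hans}. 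This holds for all $k\ge s_{L+1}$, as required, and $\epsilon$ is chosen small depending only on $c_*,\beta,C_1,C_2,C_3$ — never on $k$ or $L$, which is the crucial uniformity point.

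The main obstacle is bookkeeping: one must be careful that the threshold $s_l=3ml$ is spaced by exactly $3m=3\cdot(\text{gap used in }d_1)$ so that stepping the generation by one ($L\to L+1$) corresponds to moving up exactly one block of scales, and that the geometric sums in both the $s$-variable and the $d$-variable never lose more than the constant factors $\theta(\epsilon)$, $C_3$; the choice \eqref{eqn: M-cond} of $M$ and the smallness condition \eqref{eqn: chicago} are precisely what make each block contribute a clean factor of $4C_2C_3\,\epsilon^{1/2}$ over the previous generation. A secondary subtlety is verifying that the "tail" contributions from small $d$ (where $\kappa_{d_1}(L)$ is large, possibly $=1$ when $d\le m$) are harmless: this is exactly the computation already done for \eqref{eqn: intrepid} in Section~\ref{sec: case_one}, using $k\ge s_{L+1}\ge 3m$ and the exponential gain $2^{-(k-d)(2-\beta)}$ from \eqref{eqn: leslie}, and it goes through verbatim here with $C_1$ replaced by the generation-$L$ constants.
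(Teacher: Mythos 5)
Your proposal is correct and follows essentially the same route as the paper's proof: feed the generation-$L$ bounds into the central inequality of Proposition~\ref{prop: central}, then estimate the three contributions ($2^{-\eta(\epsilon)d}$, the middle sum over sizes $s$, and the $\kappa_{d_1}$ term) separately, using \eqref{eqn: C3-def}, \eqref{eqn: chicago}, \eqref{eqn: M-cond}, and the decay \eqref{eqn: leslie} to absorb the small-$d$ tail. The one place you are more compressed than the paper is the assertion that the middle sum $\sum_{s\le d_1}2^{-\eta(\epsilon)(d-s)}\kappa_s(L)$ is dominated by the top block: the paper verifies this with explicit geometric sums in each case ($s_L \le d_1$ and $s_L > d_1$, see \eqref{eqn: parent} and \eqref{eqn: obediah}), whereas you assert it. The assertion is true because moving up one block gains a factor $(4C_2C_3\epsilon^{1/2})\cdot 2^{\eta(\epsilon)\cdot 3m}= 4C_2C_3\epsilon^{1/2-3c_*M}\gg 1$ (by \eqref{eqn: M-cond}, $c_*M>7$), so consecutive block contributions form a rapidly increasing geometric sequence; you should say this explicitly rather than leaving it implicit, since it is the numerical heart of why the iteration converges. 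With that made explicit, your proof is the paper's proof.
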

\begin{proof}
By \eqref{eqn: getoor}, we may assume $L\ge 1$ and $r=k \ge s_{L+1}=3m(L+1)$. Start from an upper bound for the main inequality of Proposition \ref{prop: central}:
\begin{align}
\mathbf{E}[\#\sigma(L) \mid E_k'] \le&~C_2 \sum_{d=0}^{k+1} 2^{2d}\pi_3(2^d)2^{-\eta(\epsilon)d} \times\\
&\  \big(\sum_{s=0}^{3m}2^{\eta(\epsilon)s}+\epsilon \sum_{l=1}^L \sum_{s= (s_{l-1}+1)\wedge d_1}^{s_l \wedge d_1 }2^{\eta(\epsilon)s} (4C_2C_3\epsilon^{1/2})^{l-1}+ \epsilon(4C_2C_3\epsilon^{1/2})^L\sum_{s>s_L}^{d_1}2^{\eta(\epsilon)s}\big) \label{eqn: matthew}\\
+& C_2\sum_{d\le m}2^{2d}\pi_3(2^d) +C_2 \sum_{d>m}^{k+1} 2^{2d}\pi_3(2^d)\cdot  \kappa_{d_1}(L). \label{eqn: jonas}
\end{align}
The final sum in \eqref{eqn: matthew} is zero if $s_L \ge d_1$. The term \eqref{eqn: jonas} corresponds to the $\kappa_{d_1(i)}$ term in \eqref{eqn: central}. The term \eqref{eqn: matthew} corresponds to $2^{-\eta(\epsilon)d}$ plus the term over sizes $1\le s < d_1$ in \eqref{eqn: central}. Sizes $0\le s\le s_1$ are bounded by the first term. Other sizes are split over ranges of $(s_{l-1},s_l]$ up to $d_1$ in the second term of \eqref{eqn: matthew} and sizes $0\le s\le s_1$ are double counted from the previous term.

\subsubsection{The $\kappa_{d_1}$ term \eqref{eqn: jonas}}
The term \eqref{eqn: jonas} is bounded (since $C_1\ge 1$) by
\begin{align}
%&\sum_{d\le 2m} 2^{2d}\pi_3(2^d)+2\epsilon^2 \sum_{l=1}^L \sum_{s_{l-1} < d_1 \le s_{l}} (4C_2C_3\epsilon^{1/2})^{l-1} 2^{2d}\pi_3(2^d)\\
%& +\epsilon^2 (4C_2C_3)^L \epsilon^{L/2}\sum_{d_1> s_L}^{k+1}2^{2d}\pi_3(2^d) \nonumber \\
%\le 
& C_2 \sum_{d\le m} 2^{2d}\pi_3(2^d)+C_1C_2 \epsilon \sum_{l=1}^L(4C_2C_3\epsilon^{1/2})^{l-1}\sum_{d=3m(l-1)+m+1}^{3ml+m}2^{2d}\pi_3(2^d) \label{eqn: zachariah} \\
+ & \epsilon C_2(4C_2C_3)^L \epsilon^{L/2}\sum_{d:d_1> s_L}^{k+1}2^{2d}\pi_3(2^d). \nonumber
\end{align}
By \eqref{eqn: C3-def}, the second sum in \eqref{eqn: zachariah} is bounded by $C_1C_2 \epsilon$ times
%\begin{align}
%&\sum_{l=1}^L(4C_2C_3\epsilon^{1/2})^{l-1}\sum_{d=3m(l-1)+m+1}^{3ml+m}2^{2d}\pi_3(2^d)\\
%\le & 
\begin{equation}
\sum_{l=1}^L(4C_2C_3\epsilon^{1/2})^{l-1} C_3 2^{2(3ml+m)}\pi_3(2^{3ml+m}).\label{eqn: barabas}
\end{equation}
%\end{align}
Using \eqref{eqn: leslie}, \eqref{eqn: barabas} is no greater than
\begin{equation}
%&\sum_{l=1}^L(4C_2C_3\epsilon^{1/2})^{l-1} C_3 2^{2(3ml+m)}\pi_3(2^{3ml+m}) \nonumber \\
%\le & C_5C_3\pi_3(2^{3mL+m})2^{3\beta  mL+2m}\sum_{l=1}^L (4C_2C_3\epsilon^{1/2})^{l-1}2^{(2-\beta)3ml} \nonumber \\
C_5 C_3  (4C_2C_3\epsilon^{1/2})^{-1}2^{3\beta m L+2m} \pi_3(2^{3mL+m}) \sum_{l=1}^L 2^{(2-\beta)3ml} 2^{l\log 4C_2C_3\epsilon^{1/2}} \label{eqn: sum-0}.\\
%\le & C_5C_3 (4C_2C_3\epsilon^{1/2})^{-1} 2^{2\cdot (3m L+m)}\pi_3(2^{3mL+m}) (4C_2C_3\epsilon^{1/2})^L \frac{4C_2C_3 \epsilon^{1/2}\cdot 2^{(2-\beta)3m}}{4C_2C_3 \epsilon^{1/2} \cdot 2^{(2-\beta)3m}-1} \nonumber\\
%\le & 2\cdot C_5C_3 2^{2\cdot (3mL+m)}\pi_3(2^{3mL+m}) (4C_2C_3\epsilon^{1/2})^{L-1} \nonumber\\
\end{equation}
The sum in \eqref{eqn: sum-0} is bounded by $2^{(2-\beta)3mL}(4C_2C_3 \epsilon^{1/2})^L$ times
\[\frac{4C_2C_3 \epsilon^{1/2}\cdot 2^{(2-\beta)3m}}{4C_2C_3 \epsilon^{1/2} \cdot 2^{(2-\beta)3m}-1}\le 2,\]
if $m\ge \frac{1}{6}\frac{1}{2-\beta}\log \frac{1}{\epsilon}$. This is true for $\epsilon$ small enough (depending on $\beta$). Thus using \eqref{eqn: leslie}, \eqref{eqn: sum-0} is bounded by
\begin{equation}2\cdot C_5^2 C_3 2^{2(\beta-2)m}2^{2\cdot 3m(L+1)}\pi_3(2^{3m(L+1)}) (4C_2C_3\epsilon^{1/2})^{L-1}. \label{eqn: parent}
\end{equation}
%\begin{align*}
%2^{2\cdot (3mL+m)}\pi_3(2^{3mL+m})&= 2^{2\cdot (3mL+3m)} 2^{-4m} \pi_3(2^{3mL+3m}) \frac{\pi_3(2^{3mL+m})}{\pi_3(2^{3mL+3m})}\\
%&\le 2^{-4m} 2^{2\cdot 3m(L+1)} C_5 2^{2m \beta} \pi_3(2^{3mL+3m})\\
%&\le C_5 2^{2(\beta-2)m} 2^{2\cdot 3m(L+1)}\pi_3(2^{3m(L+1)}).
%\end{align*}

%\[4C_2C_3 \epsilon^{1/2}\cdot 2^{(2-\beta)3m} \ge 2.\]
%This in turn will be true if

Recalling the extra factors $C_1 C_2$ and $\epsilon$, we find from  \eqref{eqn: zachariah} and \eqref{eqn: parent} that \eqref{eqn: jonas} is bounded by
\begin{equation}\label{eqn: darwin}
\begin{split}
&C_2C_3 2^{2 m}\pi_3(2^{m})\\
+&~2C_1C_5^2 (4C_2C_3\epsilon^{1/2})^{L} \epsilon^{1/2}2^{2(\beta-2)m}2^{2\cdot 3m(L+1)}\pi_3(2^{3m(L+1)}) \\
+&~\epsilon C_2C_3(4C_2C_3)^L \epsilon^{L/2} 2^{2k}\pi_3(2^k).
\end{split}
\end{equation}
We compare the first two terms in \eqref{eqn: darwin} to the third using \eqref{eqn: leslie}. We have:
\begin{align}
2^{2k}\pi_3(2^k) 2^{2\cdot (m-k)}\frac{\pi_3(2^{m})}{\pi_3(2^{k})} &\le C_5 2^{2k}\pi_3(2^k) 2^{(2-\beta)(m-k)}\\
&\le C_5 2^{2k}\pi_3(2^k) 2^{m-k}. \label{eqn: ketty}
\end{align}
In the second step we have used $\beta<1$ and $k\ge s_L+m$. Then if $\epsilon$ is small enough (depending on $C_5$), \eqref{eqn: ketty} is bounded by
\begin{equation}\label{eqn: tory}
C_5 2^{2k}\pi_3(2^k) 2^{-3mL}\le C_5 2^{2k}\pi_3(2^k)\epsilon^{10L} \le 2^{2k}\pi_3(2^k)\epsilon^{9L}.
\end{equation}

For the second term in \eqref{eqn: darwin}, we find (using $\beta<1$ and $k\ge s_{L+1}$) for $\epsilon$ small (depending on $\beta$):
\begin{equation}\label{eqn: fritz}
 2^{2(\beta-2)m}2^{2\cdot 3m(L+1)}\pi_3(2^{3m(L+1)})\le C_5\epsilon^{10} 2^{2k}\pi_3(2^k).
%&\le C_52^{-m}2^{2k}\pi_3(2^k)\\
\end{equation}
Putting \eqref{eqn: tory} and \eqref{eqn: fritz} into \eqref{eqn: darwin}, we find that \eqref{eqn: jonas} is bounded by
\begin{equation}\label{eqn: jeremiah}
\epsilon(4C_2C_3\epsilon^{1/2})^L2^{2k}\pi_3(2^k)(C_2C_3+2C_1C^3_5 \epsilon^{19/2}) + C_2C_3\epsilon^{9L}  2^{2k}\pi_3(2^k),
\end{equation}  
when $k\ge s_{L+1}= s_L+3m$.

\subsubsection{Term \eqref{eqn: matthew}: case $s_L \le d_1$}
For \eqref{eqn: matthew}, we distinguish the cases when $s_L\le d_1$ and $s_L> d_1$. In the first case, the term in question is,
\begin{equation}\label{eqn: rachel}
\sum_{s=0}^{3m}2^{\eta(\epsilon)s}+\epsilon \sum_{l=1}^L \sum_{s= s_{l-1}+1}^{s_l }2^{\eta(\epsilon)s} (4C_2C_3\epsilon^{1/2})^{l-1}+\epsilon (4C_2C_3\epsilon^{1/2})^L\sum_{s>s_L}^{d_1}2^{\eta(\epsilon)s}.
\end{equation}

By a summation like the one leading to \eqref{eqn: parent}, the middle term in \eqref{eqn: rachel} is bounded by
%\begin{align}
%&2\epsilon^2\sum_{l=1}^L(4C_2C_3\epsilon^{1/2})^{l-1}\sum_{s=3m(l-1)+1}^{3ml} 2^{\eta(\epsilon)s} \nonumber \\
% \le&2\epsilon^2\sum_{l=1}^L(4C_2C_3\epsilon)^{l-1}\theta(\epsilon) 2^{c3Ml\log \frac{1}{\epsilon}} \nonumber \\
%=& 2(4C_2C_3\epsilon^{1/2})^{-1}\epsilon^2\theta(\epsilon) \sum_{l=1}^L 2^{l (3c_*M\log \frac{1}{\epsilon}+\log 4C_2C_3\epsilon^{1/2})} \nonumber \\
%\le& 2(4C_2C_3\epsilon^{1/2})^{-1} \epsilon^2 \theta(\epsilon) \frac{2^{(3c_*M\log \frac{1}{\epsilon}+\log 4C_2C_3\epsilon^{1/2})}}{2^{(3c_*M\log \frac{1}{\epsilon}+\log 4C_2C_3\epsilon^{1/2})}-1}  2^{L (3c_*M \log \frac{1}{\epsilon}+\log 4C_2C_3\epsilon^{1/2})} \label{eqn: joey}
%\end{align}
%If $s_L\le d_1$, then $d\ge 3mL+m$, so $2^{-\eta(\epsilon)d}$ times the quantity in \eqref{eqn: joey} is bounded by
\begin{equation}\label{eqn: charlene}
 4\epsilon\cdot (4C_2C_3\epsilon^{1/2})^{-1} \theta(\epsilon) 2^{\eta(\epsilon)d_1}(4C_2C_3\epsilon^{1/2})^L\le 2^{\eta(\epsilon) d} \epsilon^{2} (4C_2C_3\epsilon^{1/2})^{L} .
\end{equation}
%We have used \eqref{eqn: chicago}, and 
%\begin{equation}\label{eqn: vegas}
%\frac{2^{(3c_*M\log \frac{1}{\epsilon}+\log 4C_2C_3\epsilon^{1/2})}}{2^{(3c_*M\log \frac{1}{\epsilon}+\log 4C_2C_3\epsilon^{1/2})}-1}\le 2,
%\end{equation}
%which holds if
%\[3c_*M\log\frac{1}{\epsilon}+\log 4C_2C_3\epsilon^{1/2}\ge \log 2,\]
%or, using \eqref{eqn: M-cond},
%\[20\log \frac{1}{\epsilon}\ge \log 2.\]
%This is certainly true for $\epsilon \le 1/2$.

The first and third terms in \eqref{eqn: rachel} are bounded, respectively, by 
\[\theta(\epsilon) 2^{3m\eta(\epsilon)}\]
and
\[(4C_2C_3\epsilon^{1/2})^L\epsilon \theta(\epsilon) 2^{\eta(\epsilon)d_1}.\]
Multiplying these bounds by $2^{-\eta(\epsilon)d}$, and using \eqref{eqn: M-cond} and \eqref{eqn: chicago} we find an estimate of
\begin{equation}\label{eqn: chirac}
\epsilon \cdot \epsilon^{L} + \epsilon^2(4C_2C_3\epsilon^{1/2})^L
\end{equation}
if $L\ge 1$ and $d_1\ge s_L$. Here we have taken $\epsilon$ small depending on $C_2$ and $C_3$.

%Multiplying this bound by $2^{-\eta(\epsilon)d}$, we have,
%\begin{equation}
%\theta(\epsilon) 2^{-\eta(\epsilon)(d-2m)}.
%\end{equation}
%for $d\ge 3mL+m$, this is bounded, using \eqref{eqn: M-cond} and \eqref{eqn: chicago} by 
%\begin{equation}\label{eqn: chirac}
%le 2\epsilon^2 \cdot \epsilon^{7(3L-2)}
%\theta(\epsilon) \epsilon^{c_*M(3L-1)}\le 2\epsilon^2 \cdot \epsilon^{7L},\end{equation}
%if $L\ge 1$. 

%For the third term in \eqref{eqn: rachel}, we have a bound of
%Multiplied by $2^{-\eta(\epsilon)d}$, this is bounded by:
%\begin{equation}\label{eqn: morris}
%2\epsilon^4(4C_2C_3\epsilon^{1/2})^L
%\end{equation}
%using \eqref{eqn: chicago}.

Using \eqref{eqn: chirac}, \eqref{eqn: charlene} and performing the sum over $d$, we find that the contribution to \eqref{eqn: matthew} from $d_1\ge s_L$ is
\begin{equation}\label{eqn: rembrandt}
C_2C_3\epsilon^2 (4C_2C_3\epsilon^{1/2})^{L} 2^{2k}\pi_3(2^k) + \epsilon C_2C_3 \epsilon^{L} 2^{2k}\pi_3(2^k).
\end{equation}

\subsubsection{Term \eqref{eqn: matthew}: case $s_L>d_1$.}
We turn to the case $s_L> d_1$. We let
\begin{align*}
l_d &= \max\{l: s_{l}\le d_1\}\\
%&= \max\{l: 3ml \le d-m\}\\
&= \lfloor \frac{d}{3m}-\frac{1}{3}\rfloor .
\end{align*}
When $d_1< s_L$, \eqref{eqn: matthew} is
\begin{equation}\label{eqn: peter}
\sum_{s=0}^{3m} 2^{\eta(\epsilon)s}+\epsilon \sum_{l=1}^{l_d} \sum_{s= s_{l-1}+1}^{s_l}2^{\eta(\epsilon)s} (4C_2C_3\epsilon^{1/2})^{l-1}+\epsilon \sum_{s=s_{l_d}+1}^{d_1}2^{\eta(\epsilon)s}(4C_2C_3\epsilon^{1/2})^{l_d-1} .
\end{equation}
(If $l_d\le 0$, the second and third terms are zero.) As in the case $s_L\le d_1$, the first summand in \eqref{eqn: peter} is bounded by $\theta(\epsilon)2^{3m}\eta(\epsilon)$.
Multiplying this by $C_2 2^{2d}\pi_3(2^d) 2^{-\eta(\epsilon)d}$ and summing over $d$ from $0$ to $k+1$, we find
%\begin{equation}\label{eqn: sao-paolo}
%2C_2C_32^{2k}\pi_3(2^k) \frac{1}{c_*}\frac{\log \frac{1}{\epsilon}}{\log 2}\epsilon^{-4}\epsilon^{-2c_*M } 2^{-\eta(\epsilon)k}.
%\end{equation}
a bound of
\begin{equation}\label{eqn: corcovado}
C_2C_3 2^{2k}\pi_3(2^k) \epsilon^{16L}, 
\end{equation}
for $k\ge s_{L+1}$.
%, we have
%\[2^{-\eta(\epsilon)k}\le 2^{-c_*M(3L+2)\log \frac{1}{\epsilon}}\le \epsilon^{21 L}\epsilon^{2Mc_*}.\]
%Thus, \eqref{eqn: sao-paolo} is bounded by
%\begin{equation}\label{eqn: corcovado}
%2C_2C_3 2^{2k}\pi_3(2^k) \epsilon^{10L}, 
%\end{equation}
%for $k\ge s_L+2m$.

Using $c_* 3M\log \frac{1}{\epsilon} l_d \le \eta(\epsilon)(d-m)$ and performing a dyadic summation similar to the to one leading to \eqref{eqn: parent}, the second and third terms in \eqref{eqn: peter} are seen to give a contribution bounded by
\begin{equation}
%\begin{split}
%&2\epsilon^2 \sum_{l=1}^{l_d} (4C_2C_3\epsilon^{1/2})^{l-1} \sum_{s= s_{l-1}+1}^{s_l}2^{\eta(\epsilon)s}+\epsilon^2 (4C_2C_3\epsilon^{1/2})^{l_d-1} \sum_{s=s_{l_d}+1}^{d_1}2^{\eta(\epsilon)s}\\
%\le~ & 2\epsilon^2 \theta(\epsilon) \sum_{l=1}^{l_d-1}(4C_2C_3\epsilon^{1/2})^{-1}2^{lc_*3M \log \frac{1}{\epsilon}}2^{l\log 4C_2C_3\epsilon^{1/2}}+\epsilon^2 \theta(\epsilon) 2^{\eta(\epsilon)(d-m)}(4C_2C_3\epsilon^{1/2})^{l_d}\\
%\le~ & 4\epsilon^2 \theta(\epsilon)(4C_2C_3\epsilon^{1/2})^{-1}2^{c_*3Ml_d \log \frac{1}{\epsilon} }2^{l_d \log 4C_2C_3\epsilon^{1/2}}+\epsilon^2 \theta(\epsilon) (4C_2C_3\epsilon^{1/2})^{-1}2^{\eta(\epsilon)(d-m)}2^{l_d \log 4C_2C_3\epsilon^{1/2}}\\
%\le~ & 
3\epsilon \theta(\epsilon) (4C_2C_3\epsilon^{1/2})^{-1}2^{\eta(\epsilon)(d-m)}2^{l_d \log 4C_2C_3\epsilon^{1/2}}. \label{eqn: obediah}
%\end{split}
\end{equation}
%\begin{align*}
%&= 3m\eta(\epsilon) l_d\\
%&= 3m \eta(\epsilon) \lfloor \frac{d}{3m}-\frac{1}{3}\rfloor\\
%&
%\end{align*}

Multiplying \eqref{eqn: obediah} by $C_2 2^{2d}\pi_3(2^d)2^{-\eta(\epsilon) d}$, and adding \eqref{eqn: corcovado}, we find that the contribution to \eqref{eqn: matthew} from $s_L > d_1$ is bounded by
\begin{equation}\label{eqn: severin}
C_2C_3 2^{2k}\pi_3(2^k) \epsilon^{16L} + 3C_2 (4C_2C_3\epsilon^{1/2})^{-1}\theta(\epsilon) \epsilon^{c_*M} \epsilon \sum_{d=m}^{s_L+m}2^{2d}\pi_3(2^d)2^{l_d \log 4C_2C_3\epsilon^{1/2}}.
%=&~2C_2C_3 2^{2k}\pi_3(2^k) \epsilon^{10L}\nonumber \\
%&+ 4\epsilon^3\sum_{d=0}^{s_L+m}2^{2d}\pi_3(2^d)2^{l_d \log 4C_2C_3\epsilon^{1/2}};
 %\label{eqn: severin}
\end{equation}
Note that if
\[ s_{l-1} < d \le s_l,\]
then $l-2\le l_d\le l-1$. The sum in \eqref{eqn: severin} is bounded by
\begin{align*}
&\sum_{d=m}^{s_L+m}2^{2d}\pi_3(2^d)2^{l_d \log 4C_2C_3\epsilon^{1/2}}\\
%=&\sum_{l=1}^L\sum_{s_{l-1}< d\le  s_l}2^{2d}\pi_3(2^d)2^{(l-2) \log 4C_2C_3\epsilon^{1/2}}+\sum_{s_L< d\le s_L+ m}2^{2d}\pi_3(2^d)2^{l_d \log 4C_2C_3\epsilon^{1/2}}\\
%\le& (4C_2C_3\epsilon^{1/2})^{-2}\sum_{l=1}^L (4C_2C_3\epsilon^{1/2})^l C_32^{2\cdot 3m l}\pi_3(2^{3ml})+ C_3 (4C_2C_3\epsilon^{1/2})^{L-1} 2^{2(s_L+m)}\pi_3(2^{s_L+m})\\
\le& 2C_5 C_3 2^{2\cdot 3mL}\pi_3(2^{3mL})(4C_2C_3\epsilon^{1/2})^{L-2}+ C_3 (4C_2C_3\epsilon^{1/2})^{L-1} 2^{2(s_L+m)}\pi_3(2^{s_L+m}).
\end{align*}
Here we have performed a summation as in \eqref{eqn: sum-0}. By \eqref{eqn: chicago}, the pre-factor in front of the sum in \eqref{eqn: severin} is bounded by $4\epsilon^2$ (if $\epsilon$ is small depending on $C_2$), so we find an estimate for the second term of \eqref{eqn: severin} of
\[ 2C_5 2^{2\cdot 3mL}\epsilon \pi_3(2^{3mL})(4C_2C_3\epsilon^{1/2})^L+\epsilon^{3/2} (4C_2C_3\epsilon^{1/2})^L 2^{2(s_L+m)}\pi_3(2^{s_L+m}).\]
Returning to \eqref{eqn: severin}, we find that the contribution to \eqref{eqn: matthew} from $d$ such that $d_1\le s_L$ is bounded by 
\begin{equation}
\label{eqn: vitamin}
C_2C_3 2^{2k}\pi_3(2^k)   \epsilon^{16L}+ 2^{2(3mL+m)}\pi_3(2^{3mL+m})(4C_2C_3\epsilon^{1/2})^L(2C_5^2\epsilon^2+\epsilon^{3/2})).
\end{equation}

Using \eqref{eqn: leslie}, we have:
%\begin{align*}
\[2^{2(3mL+m)}\pi_3(2^{3mL+m}) \le \epsilon^{10}2^{2k}\pi_3(2^k),\]
%&\le C_5 2^{(2-\beta)(3mL+m-k)} 2^{2k}\pi_3(2^k)\\
%&\le C_5 2^{-m} 2^{2k}\pi_3(2^k)\\
%&\le \epsilon^{10}2^{2k}\pi_3(2^k).
when $k\ge s_L+3m$ and $\epsilon$ is small. Putting this into \eqref{eqn: vitamin}, we find a bound of
\begin{equation}\label{eqn: insulin}
\epsilon (4C_2C_3\epsilon^{1/2})^L2^{2k}\pi_3(2^k),
\end{equation}
$k\ge s_L+3m$.

\subsubsection{Reckoning}
Combining \eqref{eqn: jeremiah}, \eqref{eqn: rembrandt} and \eqref{eqn: insulin}, we find for $\epsilon$ small enough,
\begin{equation}\label{eqn: reckoning}
\mathbf{E}[\#\sigma(L) \mid E_k']\le \epsilon (4C_2C_3)^{L+1}\epsilon^{(L-1)/2}2^{2k}\pi_3(2^k),
\end{equation}
for $k\ge s_L+3m$, from which we obtain \eqref{eqn: hans} for $k\ge s_{L+1}$.
\end{proof}

%\begin{figure}
%\centering
%\scalebox{0.6}{\includegraphics[trim={0 0 0 0},clip]{fivearms}}
%\caption{One of the two ``lower boxes'' in the U-shaped region. These have side% length $2^{k+1}/3$.}
%\label{ushaped}
%\end{figure}

\section{Proof of Theorem~\ref{thm: main}}\label{sec: main_proof}
The proof of the main theorem uses a similar but simpler construction to that which appeared in Section~\ref{sec: construction}, and follows that of the main derivation of \cite{DHSchemical1}. For this reason, we omit some details.

Using \eqref{eqn: leslie}, we first choose $\delta>0$ small enough so that for $n$ large, one has 
\begin{equation}\label{eq: delta_choice_final}
n^{1+2\delta} \leq n^2\pi_3(n),
\end{equation} 
and define the truncated box
\[
\hat B(n) = B(n-n^\delta).
\]
This box is chosen so that the total number of edges contained in $B(n) \setminus \hat B(n)$ is at most $Cn^{-\delta} n^2 \pi_3(n)$, and so this region does not significantly contribute to the volume of the lowest crossing $l_n$. Around each $e \in \hat B(n) \cap l_n$, we will search for shortcuts between scales $n^{\delta/8}$ and $n^{\delta/4}$ which give a savings compared to $l_n$ of at least $n^{-c}$ for some $c>0$.

Precisely, from Proposition~\ref{prop: main}, we may choose $a<1$ so that for $\epsilon$ sufficiently small,
\[
\mathbb{E}[\#\mathfrak{s}_j \mid E_j'] \leq a^j 2^{2j}\pi_3(2^j) \quad \text{for all large }n \text{ and } j \in \left( \frac{\delta}{8} \log n, \frac{\delta}{4}\log n\right).
\]
From this we conclude that for $c = \frac{\delta}{8} \log \frac{1}{a}$, one has
\begin{equation}\label{eq: expected_bound_final}
\mathbb{E}[\#\mathfrak{s}_j \mid E_j'] \leq n^{-c} 2^{2j} \pi_3(2^j) \quad \text{for all large }n \text{ and } j \in \left( \frac{\delta}{8} \log n, \frac{\delta}{4} \log n\right)
\end{equation}
%and consequently from \eqref{eqn: triplec-lwr-bd}, for such $n$ and $j$, and any $e$,
%\begin{equation}\label{eq: E_j_bound_final}
%\mathbb{P}(E_j(e,\epsilon,n^{-c})) \geq \frac{c_0c_2}{2} \epsilon^4.
%\end{equation}

We next place $n^{-c}$-shortcuts (as in Definition~\ref{def: shortcuts}) on the lowest crossing in a maximal way, like before. That is, we select a collection of such shortcuts with the property that their detoured paths do not share any vertices, and the total length of their detoured paths is maximal. We then let $\sigma$ be the open path consisting of the union of these shortcuts and the portions of $l_n$ that are left undetoured. Just as in Claim~\ref{claim: comb}, any edge on the lowest crossing that is not covered by such a shortcut in $\sigma$ has no such shortcut around it at all. Because the events $E_k(e) \cap \{e \in l_n\}$ imply existence of shortcuts (Proposition~\ref{prop: implies-S}), one can again place \eqref{eq: expected_bound_final} into Propositions~\ref{prop: concentration} and \ref{prop: changecond} (just as in \eqref{eqn: napoleon}) to find $\eta>0$ such that for all large $n$, and uniformly in $e \subset \hat B(n)$, the probability that an edge $e$ of the lowest crossing is not covered by a detour in $\sigma$ is at most
\begin{align}
\mathbb{P}(\text{there is no }n^{-c} \text{-shortcut around }e \mid e \in l_n) & \leq\mathbb{P}\left( \cap _{j= \lceil \frac{\delta}{8} \log n \rceil}^{\lfloor \frac{\delta}{4} \log n \rfloor} E_j(e,\epsilon,n^{-c})^c \mid e \in l_n\right) \nonumber \\
&\leq 2^{-\frac{\hat c \frac{\delta}{16} \log n}{\log \frac{1}{\epsilon}}} \nonumber \\
&\leq n^{-\eta}. \label{eq: napoleons_last_battle}
\end{align}

Last, we write $(\tau_\ell)$ for the collection of detoured paths in $l_n$ and use \eqref{eq: delta_choice_final} and \eqref{eq: napoleons_last_battle} to estimate the expected length of $\sigma$ for $n$ large as
\begin{align*}
\mathbb{E}[\#\sigma \mid H_n] &\leq Cn^{1+\delta} + n^{-c} \sum_\ell \mathbb{E}[\#\tau_\ell \cap \hat B(n) \mid H_n] \\
&+ \mathbb{E}[ \# \{e \in l_n \cap \hat B(n) : e \text{ has no } n^{-c}\text{-shortcut}\} \mid H_n] \\
&\leq Cn^{1+\delta} + n^{-c} \mathbb{E}[\#l_n \cap \hat B(n) \mid H_n] + n^{-\eta} \mathbb{E}[\#l_n \cap \hat B(n) \mid H_n] \\
&\leq C [n^{-\delta} + n^{-c} + n^{-\eta}] n^2 \pi(n).
\end{align*}
Because $S_n \leq \#\sigma$, this completes the proof of Theorem~\ref{thm: main}.

\section{Large deviation bound conditional on 3 arms}\label{sec: concentration}
%Here we will try to prove the concentration statement. We will consider the three arm event $A_3(2^n)$, that there is an open path from $e_1$ to $\partial B(2^n)$, an open path from $0$ to $\partial B(2^n)$, and a closed dual path from $(1/2)e_1+(1/2)e_2$ to $\partial B(2^n)$, and that all these paths are vertex disjoint. 
Our aim is to give a bound on the conditional probability, given the three-arm event $A_3(2^n)$, that a small number of events $E_k$, which satisfy the probability bound \eqref{eq: conditional_lower_bound}, occur. On $A_3(2^n)$, we will want to have closed dual circuits with defects around the origin to perform decoupling of various events. So let us fix an integer $N \geq 1$ and, given any $k$, let $\mathfrak{C}_k$ be the event that in $A(2^{kN},2^{(k+1)N}) = B(2^{(k+1)N}) \setminus B(2^{kN})$, there is a closed dual circuit with two defects around the origin. Let $\mathfrak{D}_k$ be the event that there is an open circuit with one defect in the same annulus, $A(2^{kN},2^{(k+1)N})$. We will need a large stack of these circuit events to decouple (seven in total), and so we define this compound circuit event for $k \geq 0$ as $\hat{\mathfrak{C}}_k$, the event that the following occur:
\begin{enumerate}
\item for $i=1, 3, 4, 6, 8, 9$, the event $\mathfrak{C}_{10k + i}$ occurs and
\item the event $\mathfrak{D}_{10k}$ occurs.
\end{enumerate}

We will then begin with a sequence of events $(E_k)_{k \geq 0}$ so that 
\begin{enumerate}
\item[A.] $E_k$ depends on the state of edges in $A(2^{kN}, 2^{(k+1)N})$ and
\item[B.] for some constant $C_0>0$, one has for all $n\geq 0$ and integers $k$ with $0 \leq k \leq \frac{n}{10N}-1$,
\begin{equation}\label{eq: conditional_lower_bound}
\mathbf{P}\left(\mathfrak{B}_k ~\bigg|~ A_3(2^n)\right) \geq C_0,
\end{equation}
where
\begin{equation}\label{eq: b_j_def}
\mathfrak{B}_k = \hat{\mathfrak{C}}_k \cap E_{10k+5}.
\end{equation}
\end{enumerate}
In item B, we are requesting that $E_k$ occur, but also that it be surrounded on both sides by the total of seven defected circuits. These circuits will be needed for the ``resetting'' argument.

Define for $N \geq 1$ and $0 \leq n' \leq n$,
\[
I_{n',n} = \left\{ j = \left\lceil \frac{n'}{10N} \right\rceil, \ldots, \left\lfloor \frac{n}{10N} \right\rfloor - 1 : \mathfrak{B}_j \text{ occurs} \right\}.
\]
Note that if $n-n' \geq 40N$, then the range of $j$ specified in $I_{n',n}$ is nonempty.

\begin{thm}\label{thm: concentration}
There exist universal $C_6>0$ and $N_0>0$ such that for any $N \geq N_0$, any $n',n \geq 0$ satisfying $n-n' \geq 40N$, and any events $(E_k)$ satisfying conditions A and B,
\[
\mathbf{P}\left( \#I_{n',n} \leq C_6 C_0 \frac{n-n'}{N} ~\bigg|~ A_3(2^n)\right)\leq \exp\left( - C_6C_0 \frac{n-n'}{N}\right).
\]
\end{thm}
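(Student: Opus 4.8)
The statement is a concentration inequality for the number of "good blocks" $\mathfrak{B}_j$ that occur in the range $I_{n',n}$, conditionally on $A_3(2^n)$. The difficulty is that the events $\mathfrak{B}_j$ are not independent under $\mathbf{P}(\,\cdot\mid A_3(2^n))$: the arm event $A_3(2^n)$ ties together the configurations in all the annuli, and the events $E_{10j+5}$ themselves may be correlated across scales. The strategy is a standard "resetting"/renewal argument exploiting the fact that a closed dual circuit with a small number of defects in an annulus, together with the open defected circuit, lets one \emph{decouple} the inside of a block from the outside while still being compatible with the conditioning $A_3(2^n)$. Concretely, I would proceed as follows.

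First I would set up the conditional second-moment/exponential-moment machinery. Fix $N\ge N_0$ and write $m = \lfloor n/(10N)\rfloor$, $m' = \lceil n'/(10N)\rceil$, so that there are roughly $(n-n')/(10N)$ candidate indices $j$. The plan is to produce, for each $j$ in range, a "renewal" estimate of the form
\[
\mathbf{P}\big(\mathfrak{B}_j \mid A_3(2^n),\ \mathcal{F}_{<j}\big) \ge c\, C_0,
\]
where $\mathcal{F}_{<j}$ is the $\sigma$-algebra generated by the configuration on the union of annuli strictly below block $j$ (i.e. inside radius $2^{10jN}$), and $c>0$ is universal. This is exactly the place where the seven defected circuits bundled into $\hat{\mathfrak{C}}_k$ are used: the innermost defected circuits shield the arms coming from inside $2^{10jN}$, the outermost ones shield the arms continuing out to $2^n$, and between them one has room to glue (via RSW and generalized FKG) an arbitrary interior configuration on $E_{10j+5}$ to whatever is happening inside and outside, at a cost that is uniformly bounded below. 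The key technical input here is the conditional lower bound \eqref{eq: conditional_lower_bound} from condition B together with the gluing/decoupling estimates for arm events in annuli developed in \cite{DHSinvasionarms}; one shows that conditioning further on $\mathcal{F}_{<j}$ only changes the probability of $\mathfrak{B}_j$ by a bounded factor, because $A_3(2^n)$ restricted to the annulus of block $j$ "forgets" the interior configuration once a defected closed circuit is present.

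Given such a renewal lower bound, the concentration is routine. Let $X_j = \mathbf{1}_{\mathfrak{B}_j}$; the estimate above says that the conditional law of $X_j$ given the past stochastically dominates a Bernoulli$(cC_0)$ variable, so $\#I_{n',n} = \sum_j X_j$ stochastically dominates a sum of $\asymp (n-n')/(10N)$ i.i.d. Bernoulli$(cC_0)$ variables (this domination is made rigorous by a standard coupling along the filtration $(\mathcal{F}_{<j})$, or equivalently by controlling the exponential moment $\mathbf{E}[e^{-\lambda \sum_j X_j}\mid A_3(2^n)]$ by iterating the one-step bound $\mathbf{E}[e^{-\lambda X_j}\mid A_3(2^n),\mathcal{F}_{<j}] \le 1 - cC_0(1-e^{-\lambda})$). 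A Chernoff bound on the dominating i.i.d.\ sum then gives
\[
\mathbf{P}\Big( \textstyle\sum_j X_j \le \tfrac{1}{2} c C_0 \cdot \tfrac{n-n'}{10N} \ \Big|\ A_3(2^n)\Big) \le \exp\!\Big( - c' C_0 \tfrac{n-n'}{N}\Big),
\]
and choosing $C_6$ small enough (universal, absorbing the $\tfrac{1}{2}$, the $1/10$, and the implicit constants $c,c'$) yields the claimed bound. One small point to check is the requirement $n - n' \ge 40N$, which guarantees the index range is nonempty and in fact contains at least a few indices so that the exponent is genuinely positive; this is handled by the hypothesis and by possibly shrinking $C_6$.

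\textbf{Main obstacle.} The crux is the conditional renewal estimate $\mathbf{P}(\mathfrak{B}_j \mid A_3(2^n), \mathcal{F}_{<j}) \ge c C_0$, i.e. showing that conditioning on the \emph{global} three-arm event $A_3(2^n)$ and on everything inside block $j$ does not destroy the uniform lower bound \eqref{eq: conditional_lower_bound}, which a priori is only stated under conditioning on $A_3(2^n)$ alone. Making this rigorous requires the "decoupling through defected circuits" toolkit: one argues that on the event that there is a closed dual circuit with two defects (and an open circuit with one defect) in an appropriate sub-annulus, the conditional distribution of the configuration \emph{outside} that circuit given $A_3(2^n)$ and the inside is comparable, up to bounded Radon--Nikodym factors, to the distribution given only the presence of the two defect-endpoints on the circuit — a statement proved in \cite{DHSinvasionarms} for invasion arm events and adapted here. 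Getting the constants in this comparison to be \emph{universal} (independent of $j$, $n$, $N$ for $N \ge N_0$) is the delicate part; everything downstream is a soft martingale/Chernoff argument.
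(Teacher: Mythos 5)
Your overall plan — an exponential-moment/martingale argument powered by a decoupling estimate through the stack of defected circuits — matches the paper's, and the downstream Chernoff reasoning is fine. But there is a genuine gap in the one lemma on which everything hinges.

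You want a pointwise renewal bound of the form
\[
\mathbf{P}\big(\mathfrak{B}_j \mid A_3(2^n),\ \mathcal{F}_{<j}\big) \ge c\,C_0,
\]
where $\mathcal{F}_{<j}$ reveals \emph{all} of the configuration inside $B(2^{10jN})$. Since $\mathfrak{B}_j = \hat{\mathfrak{C}}_j \cap E_{10j+5}$, this decomposes as
\[
\mathbf{P}(\mathfrak{B}_j \mid A_3(2^n), F) = \mathbf{P}(E_{10j+5}\mid \hat{\mathfrak{C}}_j, A_3(2^n), F)\cdot \mathbf{P}(\hat{\mathfrak{C}}_j \mid A_3(2^n), F).
\]
The circuit-mediated decoupling (Lemma~\ref{lem: FGdecouple} in the paper) controls the first factor: it shows $\mathbf{P}(E_{10j+5}\mid \hat{\mathfrak{C}}_j, A_3(2^n), F, G)\ge c_1\mathbf{P}(E_{10j+5}\mid \hat{\mathfrak{C}}_j, A_3(2^n))$. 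Note that the circuit event $\hat{\mathfrak{C}}_j$ appears \emph{in the conditioning on both sides}; this is essential, because the whole point of the circuits is that decoupling becomes available once they are known to be present. What your renewal estimate needs, and what neither the paper nor your sketch supplies, is the second factor: a uniform-in-$F$ lower bound $\mathbf{P}(\hat{\mathfrak{C}}_j \mid A_3(2^n), F) \ge c$. This is \emph{not} a consequence of the swap argument as stated, and it is exactly the thing the circuits were meant to let you sidestep — you cannot use "once a defected circuit is present, the interior is forgotten" to prove that the defected circuit \emph{is} present. The chicken is conditioning on the egg.

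The paper's organization is designed precisely to avoid this. It splits the argument in two: Proposition~\ref{prop: circuit_jamming} shows that conditional on $A_3(2^n)$ the \emph{number} of blocks with $\hat{\mathfrak{C}}_j$ is linear with high probability — a distributional statement obtained via Reimer's inequality and quasimultiplicativity, which is much more tractable than a pointwise bound uniform over arbitrary interior configurations. Then the exponential moment is expanded only \emph{after} conditioning on the set $J_{n',n}=\mathcal{J}$ of circuit indices, so that when you reveal the $E_{10j_s+5}$ one at a time, the circuits you need for decoupling are already guaranteed. At that point Lemma~\ref{lem: FGdecouple} applies verbatim and gives $\mathbf{P}(\mathfrak{B}_j \mid F, J_{n',n}=\mathcal{J}, A_3(2^n))\ge c_1\mathbf{P}(\mathfrak{B}_j\mid A_3(2^n))\ge c_1 C_0$. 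You should either adopt this two-step decomposition, or — if you insist on the single renewal lemma — prove a separate decoupling estimate for the circuit probability itself (decomposing $\hat{\mathfrak{C}}_j$ over its innermost circuit and running the same swap-inequality argument, now with $E_{10j+5}$ replaced by part of the circuit stack). The latter is plausible but is additional work that your sketch does not provide, and it is cleaner to follow the paper's route.
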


For the proof of Theorem~\ref{thm: concentration}, we first need to verify that conditional on $A_3(2^n)$, many of the events $\hat{\mathfrak{C}}_k$ occur. So for $0 \leq n' \leq n$, we set
\[
J_{n',n} =  \left\{ j= \left\lceil \frac{n'}{10N} \right\rceil, \ldots, \left\lfloor \frac{n}{10N}\right\rfloor -1 :  \hat{\mathfrak{C}}_j \text{ occurs}\right\}.
\]
%Note that the range of $j$ specified in $J_{n',n}$ is nonempty when $n-n' \geq 30N$.
\begin{prop}\label{prop: circuit_jamming}
There exist $C_7>0$ and $N_0 \geq 1$ such that for all $N\geq N_0$ and $n,n'\geq 0$ with $n-n' \geq 40N$,
%for any sequence $(E_k)$ satisfying conditions A and B above,
\[
\mathbf{P}\left(\#J_{n',n} \leq C_7 \frac{n-n'}{N} ~\bigg|~ A_3(2^n)\right) \leq \exp\left( -C_7 (n-n')\right).
\]
\end{prop}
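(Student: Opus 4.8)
The plan is to show that conditional on $A_3(2^n)$, the circuit events $\hat{\mathfrak{C}}_j$ happen for a positive fraction of the admissible indices $j$ with overwhelming probability. The key structural fact is that $A_3(2^n)$, while correlated with events in the annuli $A(2^{jN},2^{(j+1)N})$, can be ``absorbed'' into the annular structure using a resetting / conditional-independence argument. First I would establish a \emph{uniform} lower bound: there is a constant $q>0$ (depending only on $N$, and bounded below for $N\ge N_0$) such that for every admissible $j$,
\[
\mathbf{P}\!\left(\hat{\mathfrak{C}}_j ~\big|~ A_3(2^n), \mathcal{F}_j \right) \ge q,
\]
where $\mathcal{F}_j$ is the $\sigma$-algebra generated by the edges \emph{outside} the annulus $A(2^{10jN},2^{10(j+1)N})$ (or an appropriate sub-annulus of it), together with the minimal information about $A_3(2^n)$ needed on that annulus. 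The point is that $A_3(2^n)$ only requires three crossing arms of the annulus; one can build the defected circuits $\mathfrak{C}_{10j+i}$ and $\mathfrak{D}_{10j}$ out of RSW boxes placed between the arms, paying only a constant probabilistic cost by the generalized FKG inequality, uniformly in the boundary data. This is the analogue of the standard ``conditional on arms, circuits in annuli occur with positive probability'' estimate; the presence of several stacked circuits costs only a fixed power of the RSW constant.

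Next I would convert this into a large deviation bound by a martingale / stochastic-domination argument. Enumerate the admissible indices $j_1 < j_2 < \cdots < j_m$ with $m = \lfloor n/10N\rfloor - \lceil n'/10N\rceil \ge (n-n')/(20N)$ (say). Processing the annuli from the inside out (or outside in), the events $\hat{\mathfrak{C}}_{j_t}$ are each, conditional on the past, of probability at least $q$ by the previous paragraph; hence $\#J_{n',n}$ stochastically dominates a Binomial$(m,q)$ random variable under $\mathbf{P}(\,\cdot\mid A_3(2^n))$. A standard Chernoff bound for the lower tail of the Binomial then gives
\[
\mathbf{P}\!\left(\#J_{n',n} \le \tfrac{q}{2} m ~\big|~ A_3(2^n)\right) \le \exp\!\left(-c\, q\, m\right) \le \exp\!\left(-C_7 (n-n')\right),
\]
for suitable $C_7 = C_7(q,N)$, which is the claimed bound (after renaming constants and absorbing the factor $1/N$; note $N \ge N_0$ is fixed so $1/N$ is a constant here, and the dependence on $n-n'$ is what matters). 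Taking the threshold $C_7 (n-n')/N \le (q/2)m$ is then a matter of choosing $C_7$ small.

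The main obstacle is making the conditional lower bound $\mathbf{P}(\hat{\mathfrak{C}}_j\mid A_3(2^n),\mathcal{F}_j)\ge q$ genuinely uniform in the boundary configuration and in $j$, i.e.\ carrying out the decoupling cleanly. The subtlety is that conditioning on $A_3(2^n)$ conditions on the \emph{existence} of arms but not on their location, so one cannot simply apply FKG naively; one must condition on the innermost realization of the relevant arms (or use a exploration that reveals them), observe that the conditional law of the configuration in the untouched part of the annulus dominates the unconditioned critical measure up to the arm constraints, and then glue circuits in the free region. This is exactly the kind of argument developed in \cite{DHSinvasionarms}, and I would invoke those tools; the sequential ``resetting'' across the seven stacked circuits $\mathfrak{C}_{10j+1},\mathfrak{C}_{10j+3},\mathfrak{C}_{10j+4},\mathfrak{C}_{10j+6},\mathfrak{C}_{10j+8},\mathfrak{C}_{10j+9}$ and $\mathfrak{D}_{10j}$ is precisely what lets one pass from ``arms exist somewhere'' to ``a constant-cost circuit-building region is available'', independently across the $j$'s. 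Once this uniform estimate is in hand, the rest is the routine Binomial comparison sketched above.
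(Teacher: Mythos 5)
Your overall architecture (a uniform lower bound on the conditional probability of the circuit events per annulus, followed by stochastic domination by a Binomial and a Chernoff bound) is a reasonable plan, but as written it does not prove the Proposition: you only get a constant lower bound $q>0$ per annulus, and then the Chernoff bound gives
\[
\mathbf{P}\bigl(\#J_{n',n} \le \tfrac{q}{2}m \mid A_3(2^n)\bigr) \le \exp(-cqm) \approx \exp\bigl(-c\,q\,\tfrac{n-n'}{N}\bigr),
\]
with the extra factor $1/N$ in the exponent. You explicitly allow $C_7 = C_7(q,N)$ and then argue ``$N\geq N_0$ is fixed so $1/N$ is a constant'' --- but in the Proposition, $C_7$ is quantified \emph{before} $N$: the same constant $C_7$ must work for all $N\geq N_0$. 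Your bound degrades as $N\to\infty$ (for instance, at the minimal separation $n-n'=40N$ your estimate only gives a fixed constant, not something decaying in $N$), so the claimed bound $\exp(-C_7(n-n'))$ with $C_7$ independent of $N$ is not obtained.

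The missing ingredient is a quantitative estimate for the per-annulus \emph{failure} probability. The paper never conditions on a filtration; instead, it observes that if the three-arm annulus crossing $A_3(2^{mN},2^{(m+1)N})$ occurs but $\mathfrak{C}_m$ fails, then by Menger's theorem there is a fourth disjoint arm, so Reimer's inequality gives
\[
\mathbf{P}\bigl(A_3(2^{mN},2^{(m+1)N})\cap \mathfrak{C}_m^c\bigr) \le 2^{-\alpha N}\,\mathbf{P}\bigl(A_3(2^{mN},2^{(m+1)N})\bigr),
\]
and similarly for $\mathfrak{D}_m^c$. This makes the per-annulus failure probability exponentially small in $N$, not just bounded away from $1$. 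That $2^{-\alpha N}$ factor is what cancels the $1/N$ loss from the number of annuli; combined with independence over disjoint annuli, quasimultiplicativity of three-arm events (to restore and remove the full $\mathbf{P}(A_3(2^n))$ factor), and an elementary Bernoulli lower-tail estimate (the paper's Lemma about independent Bernoullis), one gets $\exp(-C_7(n-n'))$ uniformly in $N\ge N_0$. To repair your argument you would need your constant $q$ to satisfy $1-q\lesssim 2^{-\alpha N}$, which requires exactly this Menger--Reimer step; the mere FKG/RSW gluing sketch you outline only yields $q$ bounded below, which is insufficient.
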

\begin{proof}
For $0 \leq n_1 \leq n_2$, let $A_3(2^{n_1},2^{n_2})$ be the event that there exist three arms from $B(2^{n_1})$ to $\partial B(2^{n_2})$: there are two open paths and one dual closed path, all disjoint, connecting $B(2^{n_1})$ to $\partial B(2^{n_2})$. First note that
\begin{align}
\mathbf{P}\left(\# J_{n',n} \leq C_7 \frac{n-n'}{N}, A_3(2^n)\right) &\leq \mathbf{P}\left( A_3\left(2^{10N\left\lceil \frac{n'}{10N} \right\rceil}\right)\right) \mathbf{P}\left( A_3\left(2^{10N\left\lfloor \frac{n}{10N}\right\rfloor}, 2^n\right) \right) \nonumber \\
&\times \mathbf{P}\left( \bigcap_{m=10\left\lceil \frac{n'}{10N}\right\rceil}^{10\left\lfloor\frac{n}{10N} \right\rfloor - 1}  A_3(2^{mN},2^{(m+1)N}),~\# J_{n',n} \leq C_7\frac{n-n'}{N}\right) \label{eq: first_eq_taco}
\end{align}
By Menger's theorem, for any $m$, the event $A_3(2^{mN},2^{(m+1)N}) \cap \mathfrak{C}_m^c$ implies $A_3(2^{mN},2^{(m+1)N}) \circ A_1(2^{mN},2^{(m+1)N})$, where $\circ$ indicates disjoint occurrence, and $A_1(2^{mN},2^{(m+1)N})$ is the event that there is one open path from $B(2^{mN})$ to $\partial B(2^{(m+1)N})$. By the RSW theorem and Reimer's inequality, there is therefore $\alpha \in (0,1)$ such that
\begin{equation}\label{eq: reimer_app}
\mathbf{P}(A_3(2^{mN},2^{(m+1)N}) \cap \mathfrak{C}_m^c) \leq 2^{-\alpha N} \mathbf{P}(A_3(2^{mN},2^{(m+1)N})).
\end{equation}
Similar reasoning shows that if $A_3(2^{mN}, 2^{(m+1)N}) \cap \mathfrak{D}_m^c$ occurs, then there are three arms as indicated by the $A_3$ event, but one additional closed dual arm crossing this annulus, and we obtain the same bound
\begin{equation}\label{eq: reimer_app_2}
\mathbf{P}(A_3(2^{mN},2^{(m+1)N} \cap \mathfrak{D}_m^c) \leq 2^{-\alpha N} \mathbf{P}(A_3(2^{mN}, 2^{(m+1)N})).
\end{equation}

Using quasimultiplicativity of arm events \cite[Proposition 12]{nolin}, independence, \eqref{eq: reimer_app}, and \eqref{eq: reimer_app_2}, there is a universal $C_8\geq 1$ such that for all $N$ and all $j \geq 0$,
\begin{align}
&\mathbf{P}\left( \cap_{l=0}^9 A_3(2^{(10j+l)N},2^{(10j+l+1)N}) \cap \hat{\mathfrak{C}}_j^c \right) \nonumber \\
\leq~& \sum_{\stackrel{0 \leq r \leq 9}{r \neq 0,2,5,7}} \mathbf{P}\left( \cap_{l=0}^9 A_3(2^{(10j+l)N},2^{(10j+l+1)N}) \cap \mathfrak{C}_{10j+r}^c\right) \nonumber \\
+~&\mathbf{P}\left(\cap_{l=0}^9 A_3(2^{(10j+l)N}, 2^{(10j+l+1)N}) \cap \mathfrak{D}_{10j}^c\right) \nonumber \\
=~& \sum_{\stackrel{0 \leq r \leq 9}{r \neq 0,2,5,7}} \left[ \left( \prod_{\stackrel{0 \leq l \leq 9}{l \neq r}} \mathbf{P}(A_3(2^{(10j+l)N},2^{(10j+l+1)N})) \right) \mathbf{P}\left(A_3(2^{(10j+r)N},2^{(10j+r+1)N}),\mathfrak{C}_{10j+r}^c\right)\right] \nonumber \\
+~& \left( \prod_{1 \leq l \leq 9} \mathbf{P}\left(A_3(2^{(10j+l)N},2^{(10j+l+1)N}\right)\right) \mathbf{P}\left(A_3(2^{10jN},2^{(10j+1)N}), \mathfrak{D}_{10j}^c\right) \nonumber \\
\leq~&7 \cdot 2^{-\alpha N} \prod_{l=0}^9 \mathbf{P}(A_3(2^{(10j+l)N},2^{(10j+l+1)N})) \nonumber \\
\leq~&7 C_8^9 2^{-\alpha N} \mathbf{P}(A_3(2^{10jN},2^{10(j+1)N})). \label{eq: bernoulli_bound}
\end{align}

These observations lead us to realizing the problem as one of concentration using independent variables. For any integer $j$ with $\frac{n'}{10N} \leq j \leq  \frac{n}{10N} -1$, let $X_j$ be the indicator of the event $\cap_{l=0}^9 A_3(2^{(10j+l)N},2^{(10j+l+1)N}) \cap \hat{\mathfrak{C}}_j^c$. Then \eqref{eq: first_eq_taco} implies
\begin{align}
\mathbf{P}\left(\# J_{n',n} \leq C_7\frac{n-n'}{N},A_3(2^n)\right) &\leq \mathbf{P}\left( A_3\left(2^{10N\left\lceil \frac{n'}{10N} \right\rceil}\right)\right) \mathbf{P}\left( A_3\left(2^{10N\left\lfloor \frac{n}{10N}\right\rfloor}, 2^n\right) \right) \nonumber\\
&\times \mathbf{P}\left( \sum_{j=\left\lceil \frac{n'}{10N}\right\rceil}^{\left\lfloor \frac{n}{10N} \right\rfloor -1} X_j \geq \left\lfloor \frac{n}{10N} \right\rfloor - \left\lceil \frac{n'}{10N}\right\rceil - C_7 \frac{n-n'}{N} \right). \label{eq: master_1}
\end{align}
Using \eqref{eq: bernoulli_bound} and the RSW theorem, the $X_j$'s are independent Bernoulli random variables with parameters  $p_j$ that satisfy for some $\beta\geq 1$
\begin{equation}\label{eq: p_j_bound}
2^{-\beta N} \leq p_j \leq 7C_8^9 2^{-\alpha N}\mathbf{P}(A_3(2^{10jN},2^{10(j+1)N})).
\end{equation}
So we need an elementary lemma about concentration of independent Bernoulli random variables with suitable parameters. 
%{\color{red} next lemma can be made better but appears to be unnecessary}
\begin{lma}\label{lem: stirling}
Given $\epsilon_1 \in (0,1)$ and $M\ge 1$, if $Y_1, \ldots, Y_M$ are any independent Bernoulli random variables with parameters $p_1, \ldots, p_M$ respectively satisfying $p_i \in [\epsilon_1,1]$ for all $i$, then for all $r \in (0,1)$,
\[
\mathbf{P}\left( \sum_{i=1}^M Y_i \geq rM \right) \leq (1/\epsilon_1)^{M(1-r)} 2^M \prod_{i=1}^M p_i.
\]
\end{lma}
\begin{proof}
One has
\[
\mathbf{P}(Y_1 + \cdots + Y_M \geq rM) = \sum_{\ell = \lceil rM \rceil}^M \mathbf{P}(Y_1 + \cdots + Y_M = \ell ).
\]
Also for $\ell$ with $\lceil rM \rceil \leq \ell \leq M$,
\begin{align*}
\mathbf{P}(Y_1 + \cdots + Y_M = \ell ) &= \sum_{\stackrel{y_1, \ldots, y_M \in \{0,1\}}{y_1 + \cdots + y_M = \ell }} p_1^{y_1} \cdots p_M^{y_M} (1-p_1)^{1-y_1} \cdots (1-p_M)^{1-y_M} \\
&= \prod_{i=1}^M p_i \sum_{\stackrel{y_1, \ldots, y_M \in \{0,1\}}{y_1 + \cdots + y_M = \ell }} \left( \frac{1-p_i}{p_i}\right)^{1-y_i} \\
&\leq \binom{M}{\ell}  \left( \frac{1-\epsilon_1}{\epsilon_1} \right)^{M-\ell} \prod_{i=1}^M p_i \\
&\leq \binom{M}{\ell} (1/\epsilon_1)^{M(1-r)} \prod_{i=1}^M p_i.
\end{align*}
We sum over $\ell$ to obtain
\[
\mathbf{P}(Y_1 + \cdots + Y_M \geq rM) \leq ( 1/\epsilon_1 )^{M(1-r)}  \left(\sum_{\ell = \lceil r M \rceil}^M \binom{M}{\ell} \right)\prod_{i=1}^M p_i,
\]
from which the lemma follows.
%To bound the sum of binomial coefficients, we write it as $2^M \sum_{\ell \geq rM}^M \binom{M}{\ell} 2^{-M} = 2^M \mathbf{P}(Z_1 + \cdots + Z_M \geq rM)$, where $(Z_i)$ are i.i.d. Bernoulli$(1/2)$. Because the sequence $(a_M)$ given by $a_M = \mathbf{P}(Z_1 + \cdots + Z_M \geq rM)$ is supermultiplicative, one has $a_M \leq e^{-\lambda M}$ for all $M \geq 1$, where $\lambda = \lim_M - \frac{1}{M} \log a_M$. It is standard to compute $\lambda = r \log r + (1-r)\log (1-r) + \log 2$, and so
%\[
%\sum_{\ell = \lceil rM \rceil}^M \binom{M}{\ell} \leq \exp\left( - M (r \log r + (1-r)\log(1-r))\right) \text{ for all } M \geq 1.
%\]
%Placing this in \eqref{eq: pre_eq} completes the proof of the lemma.
%{\color{red} start here. maybe easier to relate to fair coin flips} For the binomial coefficient, we can use Stirling's formula to show that one has
%\[
%\binom{M}{\lceil r M \rceil} \sim \frac{1}{\sqrt{2\pi r(1-r)}} \exp\left( - M (r \log r + (1-r)\log (1-r))\right) \text{ as }M \to \infty.
%\]
%As $r \uparrow 1$, the exponent $-(r \log r + (1-r)\log(1-r))$ becomes arbitrarily close to 0, so for a given $\delta$ and $\epsilon_1$, we can choose $r$ close enough to 1 and a constant $C_3>0$ such that
%\[
%\binom{M}{\lceil rM \rceil} \leq C_3 (1/\epsilon_1)^{\delta M} \text{ for all } M.
%\]
%As for the next term of \eqref{eq: pre_eq}, for $C_3$ chosen appropriately, one has $2M \leq C_3(1/\epsilon_1)^{\delta M}$. Furthermore, if $r$ is close enough to $1$,
%\[
%(1/\epsilon_1)^{M-\lceil rM \rceil} \leq (1/\epsilon_1)^{\delta M}.
%\]
\end{proof}

We now apply Lemma~\ref{lem: stirling} to \eqref{eq: master_1}, using the bounds from \eqref{eq: p_j_bound}, with $\epsilon_1 = 2^{-\beta N}$. Note that if $C_7 < 1/20$ and $n-n' \geq 40N$, one has
\[
\left\lfloor \frac{n}{10N} \right\rfloor - \left\lceil \frac{n'}{10N} \right\rceil - C_7 \frac{n-n'}{N} \geq \left( \left\lfloor \frac{n}{10N} \right\rfloor - \left\lceil \frac{n'}{10N} \right\rceil \right)(1-20C_7).
\]
So if we put $r=1-20C_7$ (noting that $r \in (0,1)$) and use Lemma~\ref{lem: stirling}, we continue from \eqref{eq: master_1} to obtain
\begin{align}
&\mathbf{P}\left(\#J_{n',n} \leq C_7 \frac{n-n'}{N}, A_3(2^n)\right) \nonumber \\
\leq~& \mathbf{P}\left( A_3\left(2^{10N\left\lceil \frac{n'}{10N} \right\rceil}\right)\right) \mathbf{P}\left( A_3\left(2^{10N\left\lfloor \frac{n}{10N}\right\rfloor}, 2^n\right) \right) \nonumber \\
\times~&\mathbf{P}\left( \sum_{j=\left\lceil \frac{n'}{10N}\right\rceil}^{\left\lfloor \frac{n}{10N} \right\rfloor -1} X_j \geq \left(\left\lfloor \frac{n}{10N} \right\rfloor - \left\lceil \frac{n'}{10N}\right\rceil\right) (1-20C_7) \right) \nonumber\\
\leq~& (2^{\beta N})^{\left(\left\lfloor \frac{n}{10N} \right\rfloor - \left\lceil \frac{n'}{10N} \right\rceil\right) \cdot 20C_7} 2^{\left\lfloor \frac{n}{10N} \right\rfloor - \left\lceil \frac{n'}{10N} \right\rceil}  \nonumber\\
\times~&\mathbf{P}\left( A_3\left(2^{10N\left\lceil \frac{n'}{10N} \right\rceil}\right)\right) \mathbf{P}\left( A_3\left(2^{10N\left\lfloor \frac{n}{10N}\right\rfloor}, 2^n\right) \right)\prod_{j=\left\lceil \frac{n'}{10N} \right\rceil}^{\left\lfloor \frac{n}{10N}\right\rfloor-1} 7C_8^9 2^{-\alpha N} \mathbf{P}(A_3(2^{10jN},2^{10(j+1)N})) \nonumber\\
=~& \left( 14C_8^9 2^{(20\beta C_7 - \alpha) N} \right)^{\left\lfloor \frac{n}{10N} \right\rfloor - \left\lceil \frac{n'}{10N} \right\rceil}   \nonumber\\
\times~&\mathbf{P}\left( A_3\left(2^{10N\left\lceil \frac{n'}{10N} \right\rceil}\right)\right) \mathbf{P}\left( A_3\left(2^{10N\left\lfloor \frac{n}{10N}\right\rfloor}, 2^n\right) \right)\prod_{j=\left\lceil \frac{n'}{10N} \right\rceil}^{\left\lfloor \frac{n}{10N}\right\rfloor-1} \mathbf{P}(A_3(2^{10jN},2^{10(j+1)N})) \label{eq: taco_head_supreme}
\end{align}

%C_3 2^{\beta N \delta \left\lfloor \frac{n}{5N} \right\rfloor} \prod_{j=0}^{\left\lfloor \frac{n}{5N} \right\rfloor-1} \left(5C_2^4 2^{-\alpha N}\mathbf{P}(A_3(2^{5jN},2^{5(j+1)N}))\right) \\
%&\times \mathbf{P}\left( A_3(2^{5N\left\lfloor \frac{n}{5N}\right\rfloor}, 2^n)\right) \\
%&= C_3 \left( 5 C_2^4 2^{(\beta \delta - \alpha) N}\right)^{\left\lfloor \frac{n}{5N} \right\rfloor} \prod_{j=0}^{\left\lfloor \frac{n}{5N} \right\rfloor-1} \mathbf{P}(A_3(2^{5jN},2^{5(j+1)N})) \\
%&\times \mathbf{P}\left( A_3(2^{5N\left\lfloor \frac{n}{5N}\right\rfloor}, 2^n)\right).
%\end{align*}
Again by quasimultiplicativity of arm events,
\begin{align*}
\mathbf{P}\left( A_3\left(2^{10N\left\lceil \frac{n'}{10N} \right\rceil}\right)\right) \mathbf{P}\left( A_3\left(2^{10N\left\lfloor \frac{n}{10N}\right\rfloor}, 2^n\right) \right)\prod_{j=\left\lceil \frac{n'}{10N} \right\rceil}^{\left\lfloor \frac{n}{10N}\right\rfloor-1} &\mathbf{P}(A_3(2^{10jN},2^{10(j+1)N})) \\
&\leq C_8^{\left\lfloor \frac{n}{10N} \right\rfloor - \left\lceil \frac{n'}{10N} \right\rceil + 1} \mathbf{P}(A_3(2^n)).
\end{align*}
Use this estimate in \eqref{eq: taco_head_supreme} to find for $C_7 < 1/20$, $n-n' \geq 40N$, and all $N \geq 1$,
\begin{align*}
\mathbf{P}\left(\#J_{n',n} \leq C_7 \frac{n-n'}{N} ~\bigg|~ A_3(2^n)\right) &\leq C_8 \left( 14C_8^{10} 2^{(20\beta C_7 - \alpha) N} \right)^{\left\lfloor \frac{n}{10N} \right\rfloor - \left\lceil \frac{n'}{10N} \right\rceil} \\
&\leq \left( 14C_8^{11} 2^{(20\beta C_7 - \alpha) N} \right)^{\left\lfloor \frac{n}{10N} \right\rfloor - \left\lceil \frac{n'}{10N} \right\rceil}
\end{align*}
Lowering $C_7$ so that $C_7 < \alpha/(40\beta)$, one has $20\beta C_7 - \alpha \leq -\alpha/2$. We also pick $N_0$ so large that for $N \geq N_0$, one has $14C_8^{11} \leq 2^{\alpha N/4}$ and obtain
\[
\mathbf{P}\left(\#J_{n',n} \leq C_7 \frac{n-n'}{N} ~\bigg|~ A_3(2^n)\right) \leq 2^{-\alpha \frac{N}{4} \left( \left\lfloor \frac{n}{10N} \right\rfloor - \left\lceil \frac{n'}{10N} \right\rceil \right)}
\]
If $n-n' \geq 40N$, then we obtain the upper bound $2^{-\alpha (n-n')/80}$, which completes the proof of Proposition~\ref{prop: circuit_jamming}.
\end{proof}

Given the bound on the probability of existence of many decoupling circuits from Proposition~\ref{prop: circuit_jamming}, we move to the proof of Theorem~\ref{thm: concentration}.
\begin{proof}[Proof of Theorem~\ref{thm: concentration}]
For $N \geq N_0$ and $n,n' \geq 0$ such that $n-n' \geq 40N$, we will estimate $\# I_{n',n}$ using the standard Chernoff bound along with a decoupling argument. So estimate using Proposition~\ref{prop: circuit_jamming}, for $C_9>0$ to be determined at the end of the proof,
\begin{align}
&\mathbf{P}\left( \# I_{n',n} \leq C_9 \frac{n-n'}{N}~\bigg|~ A_3(2^n) \right) \nonumber \\
\leq~& \exp\left( - C_7(n-n')\right) + \mathbf{P}\left( \#I_{n',n} \leq C_9\frac{n-n'}{N}, \#J_{n',n} \geq C_7 \frac{n-n'}{N} ~\bigg|~ A_3(2^n)\right) \nonumber \\
\leq~& \exp\left(- C_7(n-n')\right) + \exp\left( C_9 \frac{n-n'}{N}\right) \EE\left[e^{-\#I_{n',n}} \mathbf{1}_{\{\#J_{n',n} \geq C_7 \frac{n-n'}{N}\}} ~\bigg|~ A_3(2^n)\right]. \label{eq: come_here!}
\end{align}
The expectation we decompose over all possible sets $J_{n',n}$ as
\begin{equation}\label{eq: decompose_over_j}
\sum_{\#\mathcal{J} \geq C_7 \frac{n-n'}{N}} \EE\left[ e^{- \#I_{n',n}} ~\bigg|~J_{n',n} = \mathcal{J},~A_3(2^n)\right] \mathbf{P}(J_{n',n}= \mathcal{J} \mid A_3(2^n)).
\end{equation}
Last, we expand the expectation over a filtration. Enumerate the set $\mathcal{J} = \{j_1, \ldots, j_{r_0}\}$, where ${r_0}\geq C_7 \frac{n-n'}{N}$. Then a.s. relative to the measure 
\[
\hat{\mathbf{P}} := \mathbf{P}\left( \cdot \mid J_{n',n} = \mathcal{J}, A_3(2^n)\right),
\] 
one has $\#I_{n',n} = \sum_{s=1}^{r_0} \mathbf{1}_{\{E_{10j_s+5}\}}$. For fixed $\mathcal{J}$, define the filtration $(\mathcal{F}_s)$ by
\[
\mathcal{F}_s = \sigma\left\{ E_{10j_1+5}, \ldots, E_{10j_{s-1}+5}\right\} \text{ for } s = 1, \ldots, {r_0}.
\]
(Here, $\mathcal{F}_1$ is trivial.) Now the expectation in \eqref{eq: decompose_over_j} can be written using the expectation $\hat{\EE}$ relative to $\hat{\mathbf{P}}$ as
\begin{equation}\label{eq: cond_exp_expansion}
\hat{\EE}\left[ e^{- \mathbf{1}_{E_{10j_1+5}}} \cdots \hat{\EE}\left[ e^{- \mathbf{1}_{E_{10j_{s-1}+5}}} \hat{\EE}\left[ e^{- \mathbf{1}_{E_{10j_{r_0}+5}}} ~\bigg|~ \mathcal{F}_{r_0}\right] ~\bigg|~ \mathcal{F}_{s-1} \right] \cdots ~\bigg|~ \mathcal{F}_1 \right].
\end{equation}
For any $s=1, \ldots, {r_0}$, one has $\hat{\mathbf{P}}$-a.s., 
\begin{equation}\label{eq: before_decoupling}
\hat{\EE}\left[ e^{- \mathbf{1}_{E_{10j_s+5}}} ~\bigg|~\mathcal{F}_s\right] = 1- \hat{\mathbf{P}}(E_{10j_s+5} \mid \mathcal{F}_s)(1-e^{-1}).
\end{equation}

We bound this conditional expectation uniformly over $s$ and $\omega$ using the following decoupling estimate.

\begin{lma}\label{lem: FGdecouple}
There exists a universal constant $c_1 > 0$ such that the following holds. For any $k,n \geq 0$ and $N \geq 1$ satisfying
\[ 
k  \leq \left\lfloor \frac{n}{10N} \right\rfloor - 1,
\]  
and any events $F$ and $G$ depending on the status of edges in $B(2^{10kN})$ and $B(2^{10(k+1)N})^c$ respectively, one has
\begin{equation}
\label{eq:decmain2}
\mathbf{P}\left(E_{10k+5} \mid \hat{\mathfrak{C}}_k ,\, A_3(2^n),\, F, G \right) \geq c_1 \mathbf{P}\left(E_{10k+5} \mid \hat{\mathfrak{C}}_k,\, A_3(2^n) \right). 
\end{equation}
\end{lma}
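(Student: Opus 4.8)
The plan is to use the stack of seven defected circuits making up $\hat{\mathfrak{C}}_k$ to split the annular region in which $E_{10k+5}$ is measurable off from the (far away) regions carrying $F$ and $G$, and then to check that the remaining conditioning affects $E_{10k+5}$ only through a ``boundary datum'' to which $\mathbf{P}(E_{10k+5}\mid\cdot)$ is insensitive up to constants. Write $r_i=2^{(10k+i)N}$, so that $F$ is measurable with respect to $B(r_0)$, $G$ with respect to $B(r_{10})^c$, $E_{10k+5}$ with respect to $A(r_5,r_6)$ (condition A), the open defected circuit $\mathfrak{D}_{10k}$ lives in $A(r_0,r_1)$, and $\mathfrak{C}_{10k+i}$ lives in $A(r_i,r_{i+1})$ for $i\in\{1,3,4,6,8,9\}$; the annuli at levels $10k+2$ and $10k+7$ carry no prescribed circuit. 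Put the inner block $\mathcal I=B(r_2)$, the outer block $\mathcal O=B(r_8)^c$, and the core $\Lambda=B(r_8)\setminus B(r_2)$, so that $A(r_5,r_6)\subset\Lambda$ is separated from $\partial B(r_2)$ by the annuli at levels $10k+2,10k+3,10k+4$ and from $\partial B(r_8)$ by those at levels $10k+6,10k+7$; in particular the two circuit-free annuli $A(r_2,r_3)$ and $A(r_7,r_8)$ sit adjacent to $\partial\Lambda$ but away from $A(r_5,r_6)$.

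First I would condition, inside $\hat{\mathfrak{C}}_k\cap A_3(2^n)$, on the full configurations $\omega|_{\mathcal I}$ and $\omega|_{\mathcal O}$. By independence of the field on disjoint edge sets, conditionally on this data $\omega|_\Lambda$ is an unbiased product field, hence independent of $F$ and of $G$; moreover $\hat{\mathfrak{C}}_k$ factors as (inner circuit events, in $\sigma(\mathcal I)$) $\cap$ ($\mathfrak C_{10k+3}\cap\mathfrak C_{10k+4}\cap\mathfrak C_{10k+6}$, in $\sigma(\Lambda)$) $\cap$ (outer circuit events, in $\sigma(\mathcal O)$), while $A_3(2^n)$ decomposes into an inner three-arm event to $\partial B(r_2)$, a three-arm crossing of $\Lambda$, and an outer three-arm event from $\partial B(r_8)$, the inner and outer pieces interacting with the $\Lambda$-piece only through the (cyclically ordered) landing zones and colours of the three arm ends on $\partial B(r_2)$ and $\partial B(r_8)$. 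Calling this boundary datum $\xi$, it follows that both $\mathbf{P}(E_{10k+5}\mid\hat{\mathfrak C}_k,A_3(2^n),F,G)$ and $\mathbf{P}(E_{10k+5}\mid\hat{\mathfrak C}_k,A_3(2^n))$ are convex combinations $\sum_\xi \mathbf{P}(E_{10k+5}\mid \mathcal A_\xi)\,w_\xi$ over the admissible $\xi$, where $\mathcal A_\xi$ is the event in $\Lambda$ that $\mathfrak C_{10k+3}\cap\mathfrak C_{10k+4}\cap\mathfrak C_{10k+6}$ holds and three disjoint arms of the appropriate colours cross $\Lambda$ with landing datum $\xi$; only the weights $w_\xi$ differ between the two expressions (they involve $F,G$ in the first), while the factors $\mathbf{P}(E_{10k+5}\mid\mathcal A_\xi)$ are common to both.

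The heart of the matter is then the claim that $\mathbf{P}(E_{10k+5}\mid\mathcal A_\xi)$ is comparable, up to a universal multiplicative constant, across all admissible $\xi$, uniformly in $k$ and in the arbitrary event $E_{10k+5}$. To prove this I would fix a reference datum $\xi^\ast$ in which the three arms land in well-separated reference arcs, let $\mathcal A^{\mathrm{core}}$ be the event that $\mathfrak C_{10k+3}\cap\mathfrak C_{10k+4}\cap\mathfrak C_{10k+6}$ holds together with a well-separated three-arm crossing of $B(r_7)\setminus B(r_3)$ realizing $\xi^\ast$, and carry out a standard arm-separation and gluing argument in the two circuit-free buffer annuli $A(r_2,r_3)$ and $A(r_7,r_8)$ --- which are disjoint from $A(r_5,r_6)$ and so never touch $E_{10k+5}$ --- reconnecting the arms of $\mathcal A_\xi$ at $\partial B(r_2),\partial B(r_8)$ to the reference arms on $\partial B(r_3),\partial B(r_7)$. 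Using the Russo--Seymour--Welsh theorem, quasi-multiplicativity of arm events, Reimer's/van den Berg--Kesten inequality, and the generalized FKG inequality (as in \cite{DHSinvasionarms} and the references there, and noting that a closed dual circuit with two defects coexists with a three-arm crossing, which is exactly what makes the gluing go through), one obtains $\mathbf{P}(\mathcal A_\xi\cap E_{10k+5})\asymp q_\xi\,\mathbf{P}(\mathcal A^{\mathrm{core}}\cap E_{10k+5})$ and $\mathbf{P}(\mathcal A_\xi)\asymp q_\xi\,\mathbf{P}(\mathcal A^{\mathrm{core}})$ with a common interface cost $q_\xi$, hence $\mathbf{P}(E_{10k+5}\mid\mathcal A_\xi)\asymp \mathbf{P}(E_{10k+5}\mid\mathcal A^{\mathrm{core}})=:\rho$, independent of $\xi$.

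Putting these together: each of the two convex combinations lies in an interval $[c\rho,C\rho]$ for universal $c,C>0$, so
\[
\mathbf{P}(E_{10k+5}\mid\hat{\mathfrak C}_k,A_3(2^n),F,G)\ \ge\ c\rho\ \ge\ \frac{c}{C}\,\mathbf{P}(E_{10k+5}\mid\hat{\mathfrak C}_k,A_3(2^n)),
\]
which is \eqref{eq:decmain2} with $c_1=c/C$. I expect the main obstacle to be the third paragraph: making the arm-separation step genuinely uniform over all boundary data $\xi$ with the three closed dual circuits with two defects present inside $\Lambda$, and verifying that the constants do not depend on the arbitrary event $E_{10k+5}$ localized in $A(r_5,r_6)$ --- this is routine in the style of \cite{DHSinvasionarms}, but it is where all the planar geometry and gluing has to be checked carefully.
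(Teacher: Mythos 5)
Your high-level strategy --- using the circuit block $\hat{\mathfrak{C}}_k$ to insulate $E_{10k+5}$ from the far regions carrying $F$ and $G$, and reducing the lemma to a uniform comparability of $\mathbf{P}(E_{10k+5}\mid\mathcal{A}_\xi)$ over admissible boundary data $\xi$ --- is the right idea, and your second-paragraph observation that both conditional probabilities are convex combinations $\sum_\xi \mathbf{P}(E_{10k+5}\mid\mathcal{A}_\xi)w_\xi$ (for $\xi$ recording the exact arm exit/entry pattern, not merely coarse ``landing zones'') is correct in spirit. However, the entire content of the lemma has been pushed into your third paragraph, and as written it is a restatement of the difficulty rather than a resolution of it. The claimed relation $\mathbf{P}(\mathcal{A}_\xi\cap E)\asymp q_\xi\,\mathbf{P}(\mathcal{A}^{\mathrm{core}}\cap E)$ with a \emph{single} gluing cost $q_\xi$ common to all events $E$ localized in $A(r_5,r_6)$ is not a corollary of RSW, quasi-multiplicativity, Reimer, and generalized FKG: the cost of reconnecting arms in the buffer annulus $A(r_2,r_3)$ depends on the interface datum at $\partial B(r_3)$, and that interface datum is itself correlated with $E$ through the core. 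A plain gluing bound lets you compare $\mathbf{P}(\mathcal{A}_\xi)$ with $\mathbf{P}(\mathcal{A}^{\mathrm{core}})$, but it does not by itself give a factor that tracks simultaneously for $\mathcal{A}_\xi$ and $\mathcal{A}_\xi\cap E$.

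The missing ingredient is a quasi-independence (``swapping'') inequality for the circuit-to-circuit transition factors. The paper conditions not on the full configurations $\omega|_{\mathcal{I}},\omega|_{\mathcal{O}}$ but on the innermost (resp.\ outermost) defected circuits in prescribed annuli --- so the decomposition is indexed by discrete circuits and the Markov factorization across a circuit is exact --- and then invokes \eqref{eq: circdecoup}: with $P(\Cc,\Dc,i,j)$ the probability of the connection pattern from the defected circuit $\Cc$ to the defected circuit $\Dc$ (with an intermediate defected circuit and matching arcs), one has $P(\Cc,\Dc,i,j)P(\Cc',\Dc',i',j')\le C_{10}\,P(\Cc,\Dc',i,j')P(\Cc',\Dc,i',j)$ uniformly. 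This inequality, adapted from \cite[Lemma 6.1]{DS} and \cite[Lemma 23]{DHSchemical1}, is what lets the paper swap inner data against outer data (and hence decouple first $F$, then $G$, in two separate steps). It is exactly the lever needed to make your third paragraph work; without naming and proving it, the argument does not close, since the uniform comparability $\mathbf{P}(E\mid\mathcal{A}_\xi)\asymp\rho$ for arbitrary $E$ is itself essentially equivalent to the lemma you are trying to prove.
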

\begin{proof}

We first prove a partial version of Lemma~\ref{lem: FGdecouple}, where we remove the conditioning on $F$ but not $G$: under the assumptions of Lemma~\ref{lem: FGdecouple}, one has
  \begin{equation}
    \label{eq:decmain}
    \mathbf{P}\left(E_{10k+5} \mid \hat{\mathfrak{C}}_k ,\, A_3(2^n),\, F, G \right) \geq c_2 \mathbf{P}\left(E_{10k+5} \mid \hat{\mathfrak{C}}_k,\, A_3(2^n),\, G \right).
  \end{equation}

  The proof of \eqref{eq:decmain} proceeds via decoupling using the block of circuits whose existence is guaranteed by $\hat{\mathfrak{C}}_k$. For $\ell = 1,\, 4,\, 6,\,9$ and an outcome in $A_3(2^n)$, let $Circ_\ell(\Cc)$ be the event that $\Cc$ is the innermost (vertex self-avoiding) closed dual circuit with exactly two defects in $A(2^{(10k+\ell)N},\, 2^{(10k+\ell+1)N})$. If $A_3(2^n)$ does not occur, $Circ_\ell(\Cc)$ is the event that $\Cc$ is a closed dual circuit with exactly two open defects in $A(2^{(10k + \ell)N},\,2^{(10k + \ell + 1)N})$, such that no other such circuit in this annulus is contained in the union of $\Cc$ and its interior.

Conditioning on $F$ can change the probabilities of the various $Circ_{1}(\Cc)$ events. The role of the outer defected dual circuit (from $\mathfrak{C}_{10k+4}$) appearing before $E_{10k+5}$ is to approximately remove this bias introduced by $F$. We make this decoupling explicit by breaking the intersection on the left-hand side of \eqref{eq:decmain} into several pieces. 
%First, for any $n' \leq m_1 < m_2 \leq n$, let $\Jc_{m_1, m_2}$ denote the subset $\Jc \cap [m_1, m_2]$ so that $\Jc_{m_1, m_2}$ is a possible realization of $J_{m_1, m_2}$.

Any closed dual circuit $\Cc$ with exactly two defects has two disjoint closed arcs between these defects; order all defects and arcs arbitrarily and number the defects (resp. arcs) of $\Cc$ according to this ordering as $e_i(\Cc)$ (resp. $\Ac_i(\Cc)$) for $i = 1,\, 2$. For $\Cc$ a closed dual circuit with two defects in $A(2^{(10k+1)N}, 2^{(10k+2)N})$, let $X_{-}(\Cc,\, i)$ denote the event that
\begin{enumerate}
\item $\mathfrak{D}_k \cap Circ_{1}(\Cc)$ occurs;
\item the edge $\{0,\mathbf{e}_1\}$ is connected to $e_1(\Cc)$ and $e_2(\Cc)$ in the interior of $\Cc$ via vertex-disjoint open paths;
\item $\frac{1}{2}(\mathbf{e}_1 + \mathbf{e}_2)$ is connected to $\Ac_i(\Cc)$ via a closed dual path.
%\item $J_{n', k-1} = \Jc_{n', k-1}$.
\end{enumerate}

We first make the following claim, which will be useful in decomposing the events appearing in \eqref{eq:decmain}:
\begin{equation}\label{eq: uniquearc}
  \text{On }A_3(2^n) \cap \hat{\mathfrak{C}}_k, \text{ the event }X_{-}(\Cc,\, i) \text{ occurs for exactly one choice of }\Cc \text{ and }i. 
\end{equation}
We omit the proof of \eqref{eq: uniquearc}; the essential point is the presence of the  open defected circuit in $A(2^{10kN},\, 2^{(10k+1)N})$ having exactly one closed defect. This guarantees that exactly one $\Ac_i(\Cc)$ can connect to $\frac{1}{2}(\mathbf{e}_1+\mathbf{e}_2)$, since any closed path from the aforementioned defect will be confined by a pair of disjoint open paths leading to $e_1(\Cc)$ and $e_2(\Cc)$.

We will decompose $\hat{\mathfrak{C}}_k$ into inner, outer, and middle pieces; the above gives the ``inner'' piece. To build the outer piece, let $\hat{\mathfrak{C}}_k^+$ be the event that $\mathfrak{C}_{10k+\ell}$ occurs for $\ell = 6,\, 8,\, 9$.
Similarly, to the above, let $\Dc$ be a dual circuit in $A(2^{(k+4)N},2^{(k+5)N})$ (it will eventually be taken closed with two defects) with two distinguished primal edges $\{e_i(\Dc)\}_{i=1,2}$ crossing it and corresponding arcs $\Ac_i(\Dc)$ between them. We define the event $X_+(\Dc,\, j)$ by the following conditions:
\begin{enumerate}
%\item $Circ_{k-1}(\Dc)$ occurs;
\item $e_1(\Dc)$ and $e_2(\Dc)$ are connected to $\partial B(2^n)$ in the exterior of $\Dc$ via disjoint open paths;
\item $\Ac_j(\Dc)$ is connected in the exterior of $\Dc$ to $\partial B(2^n)$ via a closed dual path;
\item $\hat{\mathfrak{C}}_k^+$ occurs.
%\item $J_{k+1, n} = \Jc_{k+1,n}$.
\end{enumerate}

We also need the probability of ``transitions'' between $\Cc$ and $\Dc$, and it is with these that we implement the decoupling from $F$. For $\Cc$ and $\Dc$ marked dual circuits in annuli as above, let $P(\Cc, \ \Dc,\,i,\,j)$ be the probability, conditional on the event that each $e_i(\Cc)$ is open and all other edges of $\Cc$ are closed, that
\begin{enumerate}
\item $Circ_{4}(\Dc)$ occurs;
\item There is a pair of disjoint open paths in the region between $\Cc$ and $\Dc$ connecting $e_1(\Cc)$ to one of the marked edges $\{e_1(\Dc),\,e_2(\Dc)\}$ and $e_2(\Cc)$ to the other marked edge of $\Dc$;
\item There is a closed dual path in the region between $\Cc$ and $\Dc$ connecting $\Ac_i(\Cc)$ to $\Ac_{j}(\Dc)$;
\item $\mathfrak{C}_{10k+3}$ occurs.
\end{enumerate}

Note that, conditioning on $X_{-}(\Cc,\,i)$ (and further conditioning on events depending on the status of edges in the interior of $\Cc$), the process outside $\Cc$ remains a free percolation. Conditioning also on $X_+(\Dc,\,j)$  and on any other events in the exterior of $\Dc$ leaves free percolation between $\Cc$ and $\Dc$. We last note that if 
$X_{-}(\Cc,\,i)\cap X_+(\Dc,\,j)$ occurs and if the defects of $\Cc$ and $\Dc$ are connected as in item 2 in the definition of $P(\cdot,\cdot,\cdot,\cdot)$, then $\Ac_i(\Cc)$ is connected in the region between $\Cc$ and $\Dc$ to at most one of $\{\Ac_1(\Dc),\,\Ac_2(\Dc)\}$. This follows by another trapping argument involving the open paths.
%\[X_{-}(\Cc,\,i,\,\Jc_{n',k-1})\cap X_+(\Dc,\,j,\,\Jc_{k+1,n}) \cap A_3(2^n) \cap Circ_{3}(\Dc)\]
%occurs, there must be a closed dual connection from $\Ac_i(\Cc)$ to $\Ac_j(\Dc)$ lying entirely between $\Cc$ and $\Dc$ (otherwise)

Using the observations of the above paragraph and \eqref{eq: uniquearc}, we see that for events $E_{10k+5},\,F,\,G$ as in the statement of the proposition:
\begin{align}
  \mathbf{P}&\left(E_{10k+5},\, \hat{\mathfrak{C}}_k,\, A_3(2^n),\,F,\,G \right)\nonumber\\
& = \sum_{\Cc,\,\Dc,\,i,\,j}\mathbf{P}\left(F,\,X_{-}(\Cc,\,i) \right)P(\Cc,\,\Dc,\,i,\,j)\mathbf{P}\left(E_{10k+5},\,X_+(\Dc,\,j),\,G\right).\label{eq:withf}
\end{align}
Similarly, we can decompose 
\begin{align}
  \mathbf{P}&\left(\hat{\mathfrak{C}}_k,\, A_3(2^n),\,G \right)\nonumber\\
& = \sum_{\Cc',\,\Dc',\,i',\,j'}\mathbf{P}\left(\,X_{-}(\Cc',\,i') \right)P(\Cc',\,\Dc',\,i',\,j')\mathbf{P}\left(X_+(\Dc',\,j'),\, G\right),\label{eq:withoutf}
\end{align}
and analogous decompositions hold for other quantities similar to $\mathbf{P}(\hat{\mathfrak{C}}_k,\, A_3(2^n),\,G)$.

To accomplish the decoupling, we use the following inequality which is adapted from, and whose proof is essentially the same as, \cite[Lemma 6.1]{DS} (see also \cite[Lemma 23]{DHSchemical1}). It gives a form of comparability for the various circuit transition factors.
%\begin{lma}\label{lma:circdecoup}
There exists a uniform constant $C_{10} < \infty$ such that the following holds uniformly in $k,N$, as well as in choices of circuits $\Cc,\,\Cc',\,\Dc,\,\Dc'$ and arc indices $i,\,j,\,i',\,j'$:
\begin{equation}\label{eq: circdecoup}
\frac{P(\Cc,\,\Dc,\,i,\,j) P(\Cc',\,\Dc',\,i',\,j')}{P(\Cc,\,\Dc',\,i,\,j') P(\Cc',\,\Dc,\,i',\,j)} < C_{10}. 
\end{equation}
%\end{lma}

To apply \eqref{eq: circdecoup}, multiply \eqref{eq:withf} and \eqref{eq:withoutf}:
\begin{align}
  \mathbf{P}&\left(E_{10k+5},\, \hat{\mathfrak{C}}_k,\, A_3(2^n),\, F,\,G \right) \mathbf{P}\left( \hat{\mathfrak{C}}_k,\,\, A_3(2^n),\, G \right)\nonumber\\
  &= \sum_{\substack{\Cc,\,\Dc,\,i,\,j\\ \Cc',\,\Dc',\,i',\,j'}}\Big[\mathbf{P}\left(\,X_{-}(\Cc',\,i') \right)P(\Cc',\,\Dc',\,i',\,j')\mathbf{P}\left(X_+(\Dc',\,j'),\, G\right) \nonumber\\
  &\qquad \times\mathbf{P}\left(F,\,X_{-}(\Cc,\,i) \right)P(\Cc,\,\Dc,\,i,\,j)\mathbf{P}\left(E_{10k+5},\,X_+(\Dc,\,j),\,G\right) \Big]\label{eq:expand1}\\
&\geq C_{10}^{-1} \sum_{\substack{\Cc,\,\Dc,\,i,\,j\\ \Cc',\,\Dc',\,i',\,j'}}\Big[\mathbf{P}\left(\,X_{-}(\Cc',\,i') \right)P(\Cc',\,\Dc,\,i',\,j) \mathbf{P}\left(E_{10k+5},\,X_+(\Dc,\,j),\, G\right)\nonumber\\
  &\qquad \times\mathbf{P}\left(F,\,X_{-}(\Cc,\,i) \right)P(\Cc,\,\Dc',\,i,\, j') \mathbf{P}\left(X_+(\Dc',\,j'),\,G\right) \Big]\nonumber\\
&= C_{10}^{-1}\mathbf{P}\left(E_{10k+5},\, A_3(2^n),\, \hat{\mathfrak{C}}_k,\,G \right) \mathbf{P}\left(A_3(2^n),\, \hat{\mathfrak{C}}_k,\,F,\,G\right).\nonumber
\end{align}
Dividing both sides of the above by $\mathbf{P}(\hat{\mathfrak{C}}_k, A_3(2^n),\,G)$ and $\mathbf{P}\left(A_3(2^n),\, \hat{\mathfrak{C}}_k,\,F,\,G\right)$ gives
\begin{align*}
  \mathbf{P}\left(E_{10k+5} \mid A_3(2^n),\,\hat{\mathfrak{C}}_k ,\,F,\,G\right) \geq C_{10}^{-1} \mathbf{P}\left(E_{10k+5} \mid A_3(2^n),\,\hat{\mathfrak{C}}_k,\,G\right).
\end{align*}
This is the claim of \eqref{eq:decmain} with $c_2 = C_{10}^{-1}$.
%\end{proof}

%\begin{proof}[Sketch of proof of Proposition \ref{prop:FGdecouple}]
Equation~\eqref{eq:decmain} allows us to first remove the conditioning on $F$, and using it, we see that to prove Lemma~\ref{lem: FGdecouple}, it suffices to show the existence of a uniform $c_3 > 0$ such that
  \begin{equation}
    \label{eq:onegone}
    \mathbf{P}\left(E_{10k+5} \mid \hat{\mathfrak{C}}_k ,\, A_3(2^n),\, G \right) \geq c_3 \mathbf{P}\left(E_{10k+5} \mid \hat{\mathfrak{C}}_k,\, A_3(2^n) \right).
  \end{equation}
To show \eqref{eq:onegone}, we argue nearly identically to the proof of \eqref{eq:decmain}.  The main difference is just the placement of the circuits and connections in the decoupling. We now have to condition on the values of innermost defected circuits in $A(2^{(k+6)N},2^{(k+7)N})$ and $A(2^{(k+9)N},2^{(k+10)N})$.

Just as before, the effect of conditioning on $G$ is just to bias the distribution of circuits in  $A(2^{(k+9)N},2^{(k+10)N})$, and \eqref{eq: circdecoup} shows that the inner circuit approximately removes this bias. Expanding the product 
\[\mathbf{P}\left(E_{10k+5},\, \hat{\mathfrak{C}}_k,\, A_3(2^n),\,G \right) \mathbf{P}\left( \hat{\mathfrak{C}}_k,\,\, A_3(2^n) \right)  \]
similarly to \eqref{eq:expand1} and regrouping terms after applying \eqref{eq: circdecoup}, Lemma~\ref{lem: FGdecouple} follows.
\end{proof}

Returning to the proof of Theorem~\ref{thm: concentration}, we apply Lemma~\ref{lem: FGdecouple} to prove the following statement.
%\begin{lma}\label{lem: decoupling_lemma}
There exists a universal $C_{11}>0$ such that for any $N \geq 1$, any $n',n \geq 0$ satisfying $n-n' \geq 40N$, any $j = \left\lceil \frac{n'}{10N}\right\rceil, \ldots, \left\lfloor \frac{n}{10N} \right\rfloor - 1$, any $F$ depending on the state of edges in $B(2^{10jN})$, and any $\mathcal{J}$ containing $j$,
\begin{equation}\label{eq: decoupling_equation}
\mathbf{P}(\mathfrak{B}_j \mid F, J_{n',n} = \mathcal{J},A_3(2^n)) \geq C_{11}\mathbf{P}(\mathfrak{B}_j \mid A_3(2^n)).
\end{equation}
To show \eqref{eq: decoupling_equation}, write $\{J_{n',n} = \mathcal{J}\}$ as an intersection $\hat F \cap \hat{\mathfrak{C}}_j \cap G$, where $\hat F$ depends on the state of edges in $B(2^{10jN})$ and $G$ depends on the state of edges in $B(2^{10(j+1)N})^c$. Applying Lemma~\ref{lem: FGdecouple} using $F \cap \hat F$ in place of $F$, we obtain
\begin{align*}
\mathbf{P}(\mathfrak{B}_j \mid F, J_{n',n} = \mathcal{J}, A_3(2^n)) &= \mathbf{P}(E_{10j+5} \mid F, \hat F, \hat{\mathfrak{C}}_j, G, A_3(2^n)) \\
&\geq c_1 \mathbf{P}(E_{10j+5} \mid \hat{\mathfrak{C}}_j, A_3(2^n)) \\
&\geq c_1 \mathbf{P}(\mathfrak{B}_j \mid A_3(2^n)),
\end{align*}
which is \eqref{eq: decoupling_equation} with $C_{11} = c_1$.
%\end{lma}
%\begin{proof}
%{\color{red} INSERT JACK'S PROOF}
%\end{proof}

We now apply \eqref{eq: decoupling_equation} to the probability in \eqref{eq: before_decoupling}. For a fixed $\mathcal{J} = \{j_1, \ldots, j_{r_0}\}$ with ${r_0} \geq C_1 \frac{n-n'}{N}$ and $s=1,\ldots, {r_0}$, let $x_1, \ldots, x_{s-1} \in \{0,1\}$ and put
\[
F = \{\mathbf{1}_{E_{10j_1+5}}=x_1, \ldots, \mathbf{1}_{E_{10j_{s-1}+5}}=x_{s-1}\}.
\]
Then for $\omega \in F \cap \{J_{n',n} = \mathcal{J}\} \cap A_3(2^n)$, the event $\hat{\mathfrak{C}}_{j_s}$ occurs, and so
\begin{align*}
\hat{\mathbf{P}}(E_{10j_s+5} \mid \mathcal{F}_s)(\omega) &= \mathbf{P}(E_{10j_s+5} \mid F,J_{n',n}=\mathcal{J},A_3(2^n)) \\
&= \mathbf{P}(\mathfrak{B}_{j_s} \mid F, J_{n',n} = \mathcal{J}, A_3(2^n)) \\
&\geq C_{11} \mathbf{P}(\mathfrak{B}_{j_s} \mid A_3(2^n)).
\end{align*}
Using assumption \eqref{eq: conditional_lower_bound}, we obtain $\hat{\mathbf{P}}$-a.s. for $\omega \in F \cap \{J_{n',n} = \mathcal{J}\} \cap A_3(2^n)$
\[
\hat{\mathbf{P}}(E_{10j_s+5} \mid \mathcal{F}_s) \geq C_{11}C_0.
\]
Because such events generate the sigma-algebra $\mathcal{F}_s$, the same inequality is valid $\hat{\mathbf{P}}$-a.s., and so replacing this in \eqref{eq: before_decoupling}, we have
\[
\hat{\EE} \left[ e^{-\mathbf{1}_{E_{10j_s+5}}} ~\bigg|~\mathcal{F}_s \right] \leq 1 - C_{11}C_0 (1-e^{-1}).
\]

Starting with this bound for $s={r_0}$, we place it in \eqref{eq: cond_exp_expansion}, and then repeat for $s={r_0}-1$, and so on, until $s=1$ to obtain the overall bound for ${r_0}=\# \mathcal{J} \geq C_7 \frac{n-n'}{N}$
\begin{align*}
\EE\left[ e^{-\# I_{n',n}} ~\bigg|~ J_{n',n}= \mathcal{J},A_3(2^n) \right] &\leq \left( 1-C_{11}C_0 (1-e^{-1})\right)^{r_0} \\
&\leq \left( 1-C_{11}C_0 (1-e^{-1})\right)^{C_7 \frac{n-n'}{N}}.
\end{align*}
We sum this in \eqref{eq: decompose_over_j} for
\begin{align*}
&\EE\left[ e^{-\#I_{n',n}} \mathbf{1}_{\left\{ \# J_{n',n} \geq C_7 \frac{n-n'}{N}\right\}} ~\bigg|~A_3(2^n) \right] \\
\leq~& \left( 1-C_{11}C_0 (1-e^{-1})\right)^{C_7 \frac{n-n'}{N}} \mathbf{P}\left( J_{n',n} \geq C_7 \frac{n-n'}{N} ~\bigg|~ A_3(2^n)\right),
\end{align*}
and so, returning to \eqref{eq: come_here!}, we conclude that
\[
\mathbf{P}\left( \# I_{n',n} \leq C_9 \frac{n-n'}{N} ~\bigg|~ A_3(2^n) \right) \leq e^{-C_7(n-n')} + e^{C_9 \frac{n-n'}{N}} (1-C_{11}C_0(1-e^{-1}))^{C_7\frac{n-n'}{N}}.
\]
By the inequality $\log(1-x) \leq -x$, we get the upper bound
\[
e^{-C_7(n-n')} + \exp\left( \frac{n-n'}{N} \left[ C_9 - C_{11}C_7C_0(1-e^{-1})\right]\right).
\]
%Noting that for all $t \in [0,1]$, one has $1-e^{-t} \geq t/e$, this becomes
%\[
%\mathbf{P}\left( \# I_{n',n} \leq C_3 \frac{n-n'}{N} \right) \leq e^{-C_1(n-n')} +\exp\left( \frac{n-n'}{N} t (C_3 - C_4C_1C_0/e)\right).
%\]
We therefore choose $C_9 = C_{12}C_0$, where $C_{12} = \min\left\{ 1, C_{11}C_7(1-e^{-1})/2\right\}$ to obtain the bound
\[
\mathbf{P}\left( \#I_{n',n} \leq C_{12} C_0 \frac{n-n'}{N} ~\bigg|~ A_3(2^n) \right) \leq e^{-C_7(n-n')} + \exp\left( -C_{12}C_0 \frac{n-n'}{N}\right).
\]
This implies for some universal $C_{13}>0$,
\[
\mathbf{P}\left( \#I_{n',n} \leq C_{13}C_0 \frac{n-n'}{N} ~\bigg|~ A_3(2^n) \right) \leq \exp\left( - C_{13}C_0 \frac{n-n'}{N}\right).
\]

\end{proof}


\begin{thebibliography}{9}
\bibitem{aizenmanburchard} Aizenman, M., Burchard, A. (1999). \emph{H\"older regularity and dimension bounds for random curves}. Duke Mathematical Journal, \textbf{99} (3).
\bibitem{aizenmannewman} Aizenman, M., Newman, C. M. (1984). \emph{Tree graph inequalities and critical behavior in percolation models.} Journal of Statistical Physics, \textbf{36} (1).
\bibitem{AlexanderOrbach} Alexander, S., Orbach, R. (1982). \emph{Density of states on fractals: ``fractons''.} J. Physique (Paris) Lett. {\bf 43}.
\bibitem{antalpisztora} Antal, P., Pisztora, A. (1996). \emph{On the chemical distance for supercritical Bernoulli percolation.} The Annals of Probability.
\bibitem{Biskup} Biskup, M. (2004). \emph{On the scaling of the chemical distance in long-range percolation model}. The Annals of Probability, {\bf 24}.
\bibitem{CernyPopov} \u Cern\'y, J., Popov, S. (2012). \emph{On the internal distance in the interlacement set}. Electronic Journal of Probability, {\bf 17}.
\bibitem{DHSchemical1} Damron, M., Hanson, J., Sosoe, P. (2017). \emph{On the chemical distance in critical percolation}. To appear in Electronic Journal of Probability.
\bibitem{DHSchemical2} Damron, M., Hanson, J., Sosoe, P. \emph{On the chemical distance in critical percolation, II.} Preprint.
\bibitem{DHSinvasionarms} Damron, M., Hanson, J., Sosoe, P. \emph{Arm events in invasion percolation.} Preprint.
\bibitem{DS} Damron, M., Sapozhnikov, A. (2011). \emph{Outlets of 2D invasion percolation and multiple-armed incipient infinite clusters.} Probability Theory and Related Fields, \textbf{150}.
\bibitem{DingLi} Ding, J., Li, L. (2017). \emph{Chemical distances for percolation of planar Gaussian free fields and critical random walk loop soups.} Preprint.
\bibitem{DingSly} Ding, J. Sly, A. (2015). \emph{Distances in critical long range percolation.} Preprint.
\bibitem{DRS} Drewitz, A., Ráth, B., Sapozhnikov, A. (2014). \emph{On chemical distances and shape theorems in percolation models with long-range correlations.} Journal of Mathematical Physics, {\bf 55}.
\bibitem{edwardskerstein} Edwards, B. F., Kerstein, A. R. (1985). \emph{Is there a lower critical dimension for chemical distance?}, Journal of Physics A: Mathematical and General, \textbf{18}, 17.
  \bibitem{grassberger} Grassberger, P. (1999). \emph{Pair connectedness and the shortest-path scaling in critical percolation.} Journal of Physics A: Mathematical and General \textbf{32}, 6233--6238.
  \bibitem{grimmettmarstrand} Grimmett, G. R., Marstrand, J. M. (1990). \emph{The supercritical phase of percolation is well behaved.} Proceedings of the Royal Society of London A: Mathematical, Physical and Engineering Sciences.
  \bibitem{havlin0} Havlin, S., Nossal R. (1984). \emph{Topological properties of percolation clusters}, Journal of Physics A: Mathematical and General, \textbf{17}.
  \bibitem{havlin1} Havlin, S., Trus, B., Weiss, G.H., Ben-Avraham, D., (1985). \emph{The chemical distance distribution in percolation clusters}, Journal of Physics A: Mathematical and General, \textbf{18}, 5.
    \bibitem{hermann-stanley0} Hermann, H. J., Hong, D.C., Stanley, H. E. (1985). \emph{Backbone and elastic backbone of percolation clusters obtained by the new method of 'burning'}, Jouneal of Physics A: Mathematical and General, \textbf{17}.
    \bibitem{hermann-stanley}   Herrmann, H. J., Stanley, H.E. (1988). \emph{The fractal dimension of the minimum path in two- and three-dimensional percolation.} Journal of Physics A: Mathematical and General {\bf 21}.
\bibitem{vdh-heyden} Heydenreich, M., van der Hofstad, R. (2017). \emph{Progress in high-dimensional percolation and random graphs}. To appear in CRM Short Courses Series, Springer.
\bibitem{KestenRW} Kesten, H. (1986). \emph{Subdiffusive behavior of random walk on a random cluster}, Annales de l'Institut Henri Poincar\'e Probabilit\'es et Statistiques, \textbf{22}.
\bibitem{kestenzhang} Kesten, H., Zhang, Y. (1993). \emph{The tortuosity of occupied crossings of a box in critical percolation}, Journal of statistical physics, \textbf{70} (3).
\bibitem{kozmanachmias1} Kozma, G., Nachmias, A. (2009). \emph{The Alexander-Orbach conjecture holds in high dimensions}, Invent. Math., \textbf{178}. 
\bibitem{kozmanachmias2}  Kozma, G., Nachmias, A. (2011). \emph{Arm exponents in high dimensional percolation}, J. Amer. Math. Soc.  \textbf{24}.
\bibitem{nolin} Nolin, P. (2008). \emph{Near-critical percolation in two dimensions}, Electronic Journal of Probability, \textbf{13}.
\bibitem{schramm} Schramm, O. (2017). \emph{Conformally invariant scaling limits (an overview and collection of problems)}, Proceedings of the ICM 2006 Madrid.
\bibitem{vdhs} van der Hofstad, R., Sapozhnikov, A. (2014). \emph{Cycle structure of percolation on high-dimensional tori.} Annales de l'Institut Henri Poincaré, Probabilités et Statistiques \textbf{50}.
\bibitem{zydz} Zhou, Z., Yang, J., Deng, Y., and Ziff, R. M. (2012). \emph{Shortest-path fractal dimension for percolation in two and three dimensions.} Physical Review Letters \textbf{86}.
\end{thebibliography}
\end{document}